\newcommand{\mbR}{\mathbb{R}}
\newcommand{\mbC}{\mathbb{C}}
\newcommand{\mbQ}{\mathbb{Q}}
\newcommand{\mbK}{\mathbb{K}}
\def\mbN{\mathbb{N}}
\def\mbP{\mathbb{P}}
\newcommand{\<}{\leq}
\def\>{\geq}
\def\ve{\varepsilon}
\def\subset{\subseteq}
\newcommand{\lrd}{\lfloor}
\newcommand{\rrd}{\rfloor}
\newcommand{\num}{\equiv}
\def\mcA{\mathcal{A}}
\def\mcO{\mathcal{O}}
\def\mcC{\mathcal{C}}
\def\mcD{\mathcal{D}}
\def\mcE{\mathcal{E}}
\def\mcL{\mathcal{L}}
\def\mcN{\mathcal{N}}
\def\mcP{\mathcal{P}}
\def\mcR{\mathcal{R}}
\def\mcS{\mathcal{S}}
\def\mcT{\mathcal{T}}
\def\mcW{\mathcal{W}}
\newcommand{\rtmap}{\dashrightarrow}
\newtheorem{theorem}{Theorem}[section]
\newtheorem{lemma}[theorem]{Lemma}
\newtheorem{proposition}[theorem]{Proposition}
\newtheorem{corollary}[theorem]{Corollary}
\newtheorem{conjecture}[theorem]{Conjecture}
\theoremstyle{remark}
\newtheorem{remark}[theorem]{Remark}
\theoremstyle{definition}
\newtheorem{definition}[theorem]{Definition}
\theoremstyle{definition}
\numberwithin{equation}{section}
\theoremstyle{definition}
\def\Supp{\operatorname{Supp}}
\def\dim{\operatorname{dim}}
\def\codim{\operatorname{codim}}
\def\chr{\operatorname{char}}
\def\Ex{\operatorname{Ex}}
\def\WDiv{\operatorname{WDiv}}
\def\NE{\overline{\operatorname{NE}}}
\def\max{\operatorname{max}}
\def\Eff{\operatorname{\overline{Eff}}}
\def\NM{\operatorname{\overline{NM}}}
\def\SNM{\operatorname{\overline{SNM}}}
\def\NF{\operatorname{\overline{NF}}}
\def\Univ{\operatorname{Univ}}
\def\Hom{\operatorname{Hom}}
\def\ev{\operatorname{ev}}
\def\pt{\operatorname{pt}}
\def\vol{\operatorname{vol}}
\author{Omprokash Das}
\address{Department of Mathematics\\
University of California, Los Angeles\\
520 Portola Plaza\\
Math Sciences Building 6363.}
\email{das@math.ucla.edu, omprokash@gmail.com}
\date{}
\begin{document}
	\keywords{Minimal Model Program, Nef Curves, Movable Curves, Positive Characteristic, Characteristic $p>5$, $3$-folds, Finiteness of LMMP, Batyrev's Conjecture}
	\subjclass[2010]{14E30, 14E05, 14E99}
\title[Finiteness of LMM and Nef curves on $3$-folds in $\chr p>5$]{Finiteness of Log Minimal Models and Nef curves on $3$-folds in characteristic $p>5$}
\maketitle

\begin{abstract}
In this article we prove a finiteness result on the number of log minimal models for $3$-folds in $\chr p>5$. We then use this result to prove a version of Batyrev's conjecture on the structure of nef cone of curves on $3$-folds in characteristic $p>5$. We also give a proof of the same conjecture in full generality in characteristic $0$. We further verify that the duality of movable curves and pseudo-effective divisors hold in arbitrary characteristic. We then give a criterion for the pseudo-effectiveness of the canonical divisor $K_X$ of a smooth projective variety in arbitrary characteristic in terms of the existence of a family of rational curves on $X$.
\end{abstract}

\tableofcontents

\section{Introduction}
Lots of progress have been made recently on the log minimal model program for $3$-folds in characteristic $p>5$, see \cite{HX15}, \cite{Bir16}, \cite{BW17} and \cite{Wal17}. One of the things that is not treated in these papers is the finiteness of the number of log minimal models. A partial answer was given in \cite[Theorem 1.4]{BW17}. Here we show that a stronger finiteness result (Theorem \ref{thm:finite-mmp}) analogous to \cite[Corollary 1.1.5]{BCHM10} holds on $3$-folds in $\chr p>5$. We then give some applications of this result (Corollary \ref{cor:adjoint-rings}).\\

On the second part of the paper we work with the nef and movable cone of curves and pseudo-effective divisors. First we verify that a famous theorem \cite[Theorem 2.2]{BDPP13} on the duality of \emph{strongly movable curves} and pseudo-effective divisors hold in positive characteristic in arbitrary dimension (Theorem \ref{thm:cone-duality}). We then give some applications of this result (Theorem \ref{thm:covering-rational-curves} and Corollary \ref{cor:uniruled-criterion}). Finally we focus our attention to Batyrev's conjecture on the structure of nef cone of curves.
\begin{conjecture}\emph{\cite[Conjecture 4.4]{Bat92}}\label{con:batyrev}
	Let $(X, \Delta)$ be a projective KLT pair. Then there are countably many $(K_X+\Delta)$-negative movable curves $C_i$ such that 
	\[\NE(X)_{K_X+\Delta\>0}+\NM(X)=\NE(X)_{K_X+\Delta\>0}+\sum\mbR_{\>0}[C_i].    \]
The rays $\mbR_{\>0}[C_i]$ only accumulate along the hyperplane $(K_X+\Delta)^\bot$.\\	
	\end{conjecture}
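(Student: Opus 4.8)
The plan is to derive the conjecture from three ingredients that are available by the point at which it is proved: the duality of movable curves and pseudo-effective divisors (Theorem~\ref{thm:cone-duality}), the existence and termination of the $(K_X+\Delta)$-minimal model program with scaling together with the finiteness of log minimal models (Theorem~\ref{thm:finite-mmp} for $3$-folds in $\chr p>5$, and \cite[Corollary 1.1.5]{BCHM10} in characteristic $0$), and the cone theorem. First I would dispose of the trivial case. By Theorem~\ref{thm:cone-duality} one has $\NM(X)=\Eff(X)^{\vee}$; hence if $K_X+\Delta$ is pseudo-effective then $(K_X+\Delta)\cdot\alpha\>0$ for every $\alpha\in\NM(X)$, and since $\NM(X)\subset\NE(X)$ this forces $\NM(X)\subset\NE(X)_{K_X+\Delta\>0}$, so the asserted identity holds with the empty family. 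I may therefore assume that $K_X+\Delta$ is \emph{not} pseudo-effective. As $\NM(X)$ is a pointed closed convex cone, the problem reduces to producing the curves $C_i$ and showing that the extremal rays $R=\mbR_{\>0}[\alpha]$ of $\NM(X)$ with $(K_X+\Delta)\cdot\alpha<0$ are exactly the rays $\mbR_{\>0}[C_i]$ --- the extremal rays on which $K_X+\Delta$ is non-negative already lie in $\NE(X)_{K_X+\Delta\>0}$.

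The candidate curves arise from Mori fibre spaces. Given an ample $\mbQ$-divisor $A$, run the $(K_X+\Delta)$-MMP with scaling of $A$; since $K_X+\Delta$ is not pseudo-effective this terminates with a Mori fibre space $g_A\colon Y_A\to Z_A$, and the fibres of $g_A$ are covered by $(K_{Y_A}+\Delta_{Y_A})$-negative rational curves. A general such curve can be chosen to avoid the image in $Y_A$ of the (codimension $\>2$) locus modified by the birational contraction $X\bir Y_A$; its strict transform on $X$ is then a movable rational curve $C_A$ with $(K_X+\Delta)\cdot[C_A]<0$, the intersection number being computed already on $Y_A$ since $C_A$ meets none of the contracted divisors. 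Letting $A$ vary, the finiteness of log minimal models (Theorem~\ref{thm:finite-mmp}, resp.\ \cite[Corollary 1.1.5]{BCHM10}) shows that only finitely many pairs $(Y_A,g_A)$ occur as $A$ ranges over any fixed rational polyhedral subcone of the ample cone; exhausting the ample cone therefore yields a countable collection $\{C_i\}$ of $(K_X+\Delta)$-negative movable curves such that, for each $\ve>0$, only finitely many of the rays $\mbR_{\>0}[C_i]$ meet the slice $\{\alpha:(K_X+\Delta)\cdot\alpha\<-\ve\}$; hence these rays can accumulate only along $(K_X+\Delta)^{\bot}$.

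It remains to prove $\NM(X)\subset\NE(X)_{K_X+\Delta\>0}+\sum\mbR_{\>0}[C_i]$, equivalently that each extremal ray $R=\mbR_{\>0}[\alpha]$ of $\NM(X)$ with $(K_X+\Delta)\cdot\alpha<0$ equals some $\mbR_{\>0}[C_i]$. Applying Theorem~\ref{thm:cone-duality} in the form $\NM(X)^{\vee}=\Eff(X)$, extremality of $R$ produces a pseudo-effective class $D$ on the boundary of $\Eff(X)$ with $D\cdot\alpha=0$ and $D\cdot\beta\>0$ for all $\beta\in\NM(X)$; since $(K_X+\Delta)\cdot\alpha<0=D\cdot\alpha$, the ray $R$ is cut off from $\Eff(X)$ precisely in the direction of $K_X+\Delta$. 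One then selects an ample $\mbQ$-divisor $A$ for which the segment $t\mapsto K_X+\Delta+tA$ leaves the pseudo-effective cone through the face of $\Eff(X)$ supported by $D$, runs the $(K_X+\Delta)$-MMP with scaling of $A$, and identifies the output Mori fibre space $g_A$ as the one whose fibre class transports to $R$, so that $\mbR_{\>0}[C_A]=R$. This last matching --- steering the MMP with scaling so that the Mori fibre space it produces ``sees'' a prescribed $(K_X+\Delta)$-negative extremal ray of $\NM(X)$ --- is the technical heart of the proof, and it is here that the geography of log minimal models (through Theorem~\ref{thm:finite-mmp}) has to be played off against the cone duality (Theorem~\ref{thm:cone-duality}); the subsidiary points, that a fibre of $g_A$ yields a movable $(K_X+\Delta)$-negative curve on $X$ and that the collection $\{C_i\}$ is genuinely countable with the stated accumulation, are handled by the finiteness statements above. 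With this in hand the conjecture follows: in characteristic $0$ in every dimension via \cite{BCHM10}, and in $\chr p>5$ for $3$-folds via \cite{HX15,Bir16,BW17} and Theorem~\ref{thm:finite-mmp}.
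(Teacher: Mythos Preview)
This statement is recorded in the paper as a \emph{conjecture}, not a theorem; there is no proof of it in the paper to compare against. What the paper actually establishes are two partial results: Theorem~\ref{thm:nef-cone-structure} for $3$-folds in $\chr p>5$ (with a closure $\overline{\sum\mbR_{\>0}[C_i]}$ on the right-hand side and a weaker accumulation statement), and Theorem~\ref{thm:nef-cone-finiteness} over $\mbC$ (the full conjecture, but requiring Birkar's boundedness of $\ve$-lc Fano varieties and Koll\'ar's effective base-point freeness). Your proposal, by contrast, claims to prove the conjecture as stated in both settings.

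The genuine gap is at the accumulation step. From ``finitely many Mori fibre spaces per rational polytope in the ample cone'' you infer that for each $\ve>0$ only finitely many rays $\mbR_{\>0}[C_i]$ meet $\{\alpha:(K_X+\Delta)\cdot\alpha\<-\ve\}$. As written this is vacuous (every $(K_X+\Delta)$-negative ray meets that half-space after rescaling), and even with the intended normalisation the inference fails: countably many polytopes yield countably many curves, but nothing in the finiteness of models prevents infinitely many of them from lying a fixed distance from $(K_X+\Delta)^{\bot}$. This is exactly the point where the paper, over $\mbC$, must invoke BAB and effective base-point freeness (Proposition~\ref{pro:boundedness}) to obtain a uniform bound $-(K_X+\Delta)\cdot C\<G$, hence $H\cdot C\<G/\ve$, forcing the curves into a bounded family; in $\chr p>5$ no such bound is available and the paper does \emph{not} obtain the accumulation statement of the conjecture, settling instead for the closure in Theorem~\ref{thm:nef-cone-structure}. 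You also leave the ``steering'' step---matching a prescribed coextremal ray to a specific Mori fibre space---as an acknowledged black box; in the paper this is what the machinery of $V$-bounding divisors (Proposition~\ref{pro:v-bounding-divisor}) and the neighbourhood lemma (Proposition~\ref{pro:nbhd-lemma}), following \cite{Leh12}, is built to do.
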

This conjecture is one of the main outstanding conjecture in this direction. We prove a version of this conjecture on $3$-folds in $\chr p>5$ (Theorem \ref{thm:nef-cone-structure}). We also give a proof of this conjecture in full generality over the field of complex numbers (Theorem \ref{thm:nef-cone-finiteness}). Batyrev proved this conjecture for terminal $3$-folds defined over $\mbC$. However, his proof contained an error which was later rectified by Araujo in \cite{Ara10}. Using sophisticated tools from \cite{BCHM10} she laid down a clear path towards the proof of the higher dimensional version of the conjecture. Her results were then sharpened later by Lehmann in \cite{Leh12} again using the tools from \cite{BCHM10}. We follow the general strategy as in \cite{Leh12} in proving this conjecture in positive characteristic, and the finiteness of log minimal models (Theorem \ref{thm:finite-mmp}) becomes indispensable in this process.\\

 The following theorem is the positive characteristic ($\chr p>5$) analog of \cite[Corollary 1.1.5]{BCHM10} on $3$-folds. This result is interesting on its own and should be useful in the future. 
\begin{theorem}\emph{(Finiteness of log minimal models)}
\label{thm:finite-mmp}	
Let $\pi:X\to U$ be a projective contraction between two normal projective varieties with $\dim X=3$ and $\chr p>5$. Let $V$ be a finite dimensional affine subspace of $\WDiv_\mbR(X)$ which is defined over the rationals. Suppose that there is a divisor $0\<\Delta_0\in V$ such that $K_X+\Delta_0$ is KLT. Let $A$ be a general ample $\mbQ$-divisor over $U$, which has no components in common with any element of $V$. Then the followings hold:
\begin{enumerate}
	\item There  are finitely many birational contractions $\phi_i:X\rtmap Y_i$ over $U$, $1\<i\<m$ such that
	\[\mcE_{A, \pi}(V)=\bigcup_{i=1}^m\mcW_i, \]
	where each $\mcW_i=\mcW_{\phi_i, A, \pi}(V)$ is a rational polytope. Moreover, if $\phi:X\rtmap Y$ is a log minimal model of $(X, \Delta)$ over $U$, for some $\Delta\in\mcE_{A,\pi}(V)$, then $\phi=\phi_i$, for some $1\<i\<m$.\\
	\item There are finitely many rational maps $\psi_j:X\rtmap Z_j$ over $U$, $1\<j\<n$ which partition $\mcE_{A, \pi}(V)$ into the subsets $\mcA_j=\mcA_{\psi_j, A, \pi}(V)$.\\
	\item For every $1\<i\<m$ there is a $1\<j\<n$ and a morphism $f_{i, j}:Y_i\to Z_j$ such that $\mcW_i\subset\bar{\mcA_j}$.\\
	\item In particular, $\mcE_{A, \pi}(V)$ is a rational polytope and each $\bar{\mcA_j}$ is a finite union of rational polytopes.\\
	\end{enumerate}	
	\end{theorem}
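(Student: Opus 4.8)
The plan is to transplant the proof of \cite[Corollary~1.1.5]{BCHM10}, together with the finiteness of log terminal models underlying it, to $\chr p>5$, feeding in — in place of the general minimal model program — the minimal model theory of $3$-folds now available in this range: existence of flips (\cite{HX15}), the cone and contraction theorems and existence of log minimal models for KLT pairs (\cite{Bir16}, \cite{BW17}), termination of the MMP with scaling of an ample divisor (\cite{BW17}, \cite{Wal17}), and finite generation of the relevant adjoint rings. Since every statement of the program invoked in the characteristic-$0$ argument holds for $3$-folds over $U$ in $\chr p>5$, including in the relative setting, the formal content carries over with only cosmetic changes. Concretely: because $A$ is a general ample $\mbQ$-divisor over $U$ with no common components with elements of $V$, every pair $(X,\Delta)$ with $\Delta$ in $\mcL:=\{A+\Delta':\Delta'\in V,\ \Delta'\ge 0,\ (X,A+\Delta')\ \text{KLT}\}$ is KLT, and a common log resolution shows, exactly as over $\mbC$, that $\mcL$ is a rational polytope — in particular compact — and nonempty, since $A+\Delta_0\in\mcL$. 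By definition $\mcE_{A,\pi}(V)\subseteq\mcL$ is the locus where $K_X+\Delta$ is pseudo-effective over $U$; running the $(K_X+\Delta)$-MMP over $U$ with scaling of an ample divisor, which terminates by \cite{BW17}, \cite{Wal17}, identifies $\mcE_{A,\pi}(V)$ with the set of $\Delta\in\mcL$ for which $(X,\Delta)$ admits a log minimal model over $U$, and it is convex since pseudo-effectivity over $U$ is preserved under convex combinations.

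\textbf{Finiteness of the $\phi_i$ (part (1)).} Fix $\Delta_0\in\mcE_{A,\pi}(V)$ and a log minimal model $\phi_0:X\rtmap Y_0$ over $U$, with $Y_0$ $\mbQ$-factorial and $K_{Y_0}+(\phi_0)_*\Delta_0$ nef over $U$. For $\Delta$ in a sufficiently small rational polytope neighborhood $N_{\Delta_0}$ of $\Delta_0$ in $\mcL$, the map $\phi_0$ is still a (partial) $(K_X+\Delta)$-MMP over $U$, so $(Y_0,(\phi_0)_*\Delta)$ is KLT and running the $(K_{Y_0}+(\phi_0)_*\Delta)$-MMP over $U$ with scaling produces a log minimal model of $(X,\Delta)$ over $U$; the polytope argument of \cite[\S7]{BCHM10} — which in dimension $3$ uses only the cone and contraction theorems, flips, and termination of the MMP with scaling, all available over $U$ in $\chr p>5$ — shows that, as $\Delta$ ranges over $N_{\Delta_0}$, only finitely many birational contractions occur as log minimal models over $U$ of the pairs $(X,\Delta)$, and the corresponding chambers are rational polytopes. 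Covering the compact set $\mcE_{A,\pi}(V)$ by finitely many such $N_{\Delta_0}$, the birational contractions that occur form the finite list $\phi_1,\dots,\phi_m$; these include every log minimal model over $U$ of every $(X,\Delta)$ with $\Delta\in\mcE_{A,\pi}(V)$, whence $\mcE_{A,\pi}(V)=\bigcup_i\mcW_i$ with each $\mcW_i$ a rational polytope, and the ``moreover'' follows at once.

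\textbf{Ample models and polyhedrality (parts (2)--(4)).} For $\Delta\in\mcE_{A,\pi}(V)$ the relative section ring $R(X/U,K_X+\Delta)$ is finitely generated: on a minimal model $Y_i$ the nef divisor $K_{Y_i}+(\phi_i)_*\Delta$ is semiample over $U$ by the base-point-free theorem for KLT $3$-folds in $\chr p>5$ (\cite{BW17}, \cite{Wal17}). Hence $Z_\Delta:=\Proj_U R(X/U,K_X+\Delta)$ is the ample model of $(X,\Delta)$ over $U$ and receives a morphism from every minimal model $Y_i$ of $(X,\Delta)$. The ample model is constant, equal to some $Z_{j(i)}$, on the relative interior of each $\mcW_i$, and jumps inside $\mcW_i$ only along finitely many rational hyperplanes determined by the cone theorem on $Y_i$; so there are finitely many ample models $Z_1,\dots,Z_n$, finitely many induced rational maps $\psi_j:X\rtmap Z_j$ partitioning $\mcE_{A,\pi}(V)$ into the sets $\mcA_j=\mcA_{\psi_j,A,\pi}(V)$, and for each $i$ a morphism $f_{i,j(i)}:Y_i\to Z_{j(i)}$ witnessing $\mcW_i\subseteq\overline{\mcA_{j(i)}}$. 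Finally $\mcE_{A,\pi}(V)=\bigcup_i\mcW_i$ is a convex finite union of rational polytopes, hence a rational polytope, and $\overline{\mcA_j}=\bigcup\{\mcW_i:f_{i,j}\ \text{exists}\}$ is a finite union of rational polytopes, which is (4).

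\textbf{The main obstacle.} The only genuinely non-formal points are the local finiteness used in part (1) — controlling the MMP on the fixed model $Y_0$ as the boundary moves in a neighborhood of a divisor for which it is nef — and, underpinning everything, checking that the full toolkit (existence of flips, termination of the MMP with scaling, cone and contraction theorems, the base-point-free theorem, finite generation of adjoint rings, $\mbQ$-factoriality) is available for KLT $3$-folds over a possibly imperfect field of characteristic $p>5$ in the relative setting $X\to U$. In characteristic $0$ these are bundled into the inductive proof of \cite[Theorem~E]{BCHM10} via restriction to boundary divisors and special termination; for $3$-folds that induction descends to surfaces, where the theory is characteristic-free, so the substantive issue is not the induction itself but the availability of its inputs in $\chr p>5$ — which is precisely why the dimension is $3$ and the bound $p>5$ is imposed.
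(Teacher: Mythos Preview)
Your overall approach is the paper's: transplant the argument of \cite[Corollary~1.1.5]{BCHM10} via its supporting lemmas, feeding in the MMP for $3$-folds in $\chr p>5$. The paper proceeds exactly this way, through analogues of \cite[Theorem~3.11.1, Corollaries~3.11.2--3.11.3, Lemmas~7.1--7.3]{BCHM10}.

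However, you have misidentified the main obstacle. The MMP toolkit you list (flips, cone and contraction, termination with scaling, base-point freeness) \emph{is} available in the cited references; that is not where the difficulty lies. The genuine issue is that the BCHM lemmas you implicitly invoke --- specifically \cite[Lemmas~3.6.12, 3.7.2--3.7.5]{BCHM10}, which are used to reduce from LC to KLT, to trade a big boundary for one containing an ample part, and (crucially for the ``moreover'' in (1)) to pass from ample models back to weak log canonical models --- all rest on Bertini's theorem for base-point-free linear systems, which fails in positive characteristic. The paper isolates precisely how Bertini is used there (given finitely many LC pairs $(X,\Delta_i)$ and an ample$/U$ $\mbQ$-divisor $A$, one needs $0\le A'\sim_{\mbQ,U}A$ with all $(X,\Delta_i+A')$ still LC) and supplies this via a mild extension of Tanaka's Bertini-type theorem, using that $X$ and $U$ are globally projective so one may add a pullback of an ample from $U$. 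Without this step your reductions are unjustified; in particular your set $\mcL$, defined with KLT rather than LC, is not a rational polytope (KLT is an open condition), and the polytope arguments do not start.

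Two smaller points. First, the ground field is algebraically closed throughout, not ``possibly imperfect.'' Second, your compactness argument for (1), as written, produces finitely many $\phi_i$ such that each $\Delta$ has \emph{some} minimal model among them; it does not yet show that \emph{every} log minimal model of every $(X,\Delta)$ appears in the list. The paper obtains this via finiteness of \emph{weak} log canonical models (the analogue of \cite[Lemma~7.2]{BCHM10}), which requires a pass to a log resolution and \cite[Lemma~3.6.12]{BCHM10} --- again resting on the Bertini substitute above.
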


A direct consequence of Theorem \ref{thm:finite-mmp} is that the ring of adjoint divisors is finitely generated:
\begin{corollary}\label{cor:adjoint-rings}
	Let $\pi:X\to U$ be a projective contraction between two normal projective varieties with $\dim X=3$ and $\chr p>5$. Fix an ample$/U$ $\mbQ$-divisor $A\>0$ on $X$. Let $\Delta_i=A+B_i$ for some $\mbQ$-divisors $B_1, B_2,\ldots, B_k\>0$. Assume that $D_i=K_X+\Delta_i$ is DLT and $\mbQ$-Cartier for all $1\<i\<k$. Then the adjoint ring
	\[
		\mcR(\pi, D^\bullet)=\bigoplus_{m_i\in\mbN, 1\<i\<k}\pi_*\mcO_X\left(\left\lrd\sum m_iD_i\right\rrd\right)
	\]
is a finitely generated $\mcO_U$-algebra.\\	 
\end{corollary}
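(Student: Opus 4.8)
\emph{Strategy.} The plan is to deduce the finite generation from the chamber decomposition of $\mcE_{A,\pi}(V)$ furnished by Theorem \ref{thm:finite-mmp}, following the way the characteristic-zero statement \cite[Corollary 1.1.9]{BCHM10} is deduced from \cite[Corollary 1.1.5]{BCHM10}. Being a finitely generated $\mcO_U$-algebra is local on $U$, so assume $U=\Spec R$ is affine and work with the corresponding section ring. First I would get into the hypotheses of Theorem \ref{thm:finite-mmp}. Write $A\sim_{\mbQ}A_0+A_1$, where $A_0$ is a sufficiently general ample $\mbQ$-divisor over $U$ with no components in common with the $B_i$ and $A_1$ is a small general ample $\mbQ$-divisor over $U$; set $V:=\langle A_1,B_1,\dots,B_k\rangle\subset\WDiv_{\mbR}(X)$, a rational finite dimensional subspace. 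Since $D_1=K_X+A_0+A_1+B_1$ is DLT and $A_0\ge0$, the pair $(X,A_1+B_1)$ is DLT, and a routine perturbation (take $A_1$ small, then $0<\epsilon\ll1$) shows $\Delta_0:=A_1+\epsilon B_1\in V$ is effective with $K_X+\Delta_0$ KLT; so Theorem \ref{thm:finite-mmp} applies to $\pi$, $V$, and the general ample $A_0$. Moreover $A_1+B_i\in\mcE_{A_0,\pi}(V)$, because $(X,A_0+(A_1+B_i))=(X,\Delta_i)$ is DLT and the $(K_X+\Delta_i)$-LMMP over $U$ terminates by the three-fold MMP in $\chr p>5$ \cite{HX15,Bir16,BW17,Wal17}.

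\emph{Reduction to finitely many pieces.} By Theorem \ref{thm:finite-mmp}, $\mcE_{A_0,\pi}(V)=\bigcup_{a=1}^{m}\mcW_a$ with each $\mcW_a$ a rational polytope over which the log minimal model of $(X,A_0+\Delta)$ over $U$ is a fixed birational contraction $\phi_a:X\rtmap Y_a$. For $0\ne n\in\mbN^k$ one has $\sum_i n_iD_i=(\textstyle\sum_i n_i)(K_X+A_0+\Delta_n)$, where $\Delta_n:=(\sum_i n_i)^{-1}\sum_i n_i(A_1+B_i)$ is a convex combination of points of $\mcE_{A_0,\pi}(V)$, hence lies in $\mcE_{A_0,\pi}(V)$. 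Thus the monoid homomorphism $\mbN^k\to\WDiv_{\mbR}(X)$, $n\mapsto\sum_i n_iD_i$, has image in the rational polyhedral cone $C$ over the rational polytope $(K_X+A_0)+\mcE_{A_0,\pi}(V)$. Pulling back the subdivision $\{\mcW_a\}$ along this rational linear map subdivides $\mbN^k$ into finitely many finitely generated submonoids $S_\ell:=\sigma_\ell\cap\mbN^k$ (Gordan's lemma), $\ell=1,\dots,r$, each contained in the preimage of the cone over a single chamber $\mcW_{a(\ell)}$. It now suffices to prove that each graded subring $\mcR_\ell:=\bigoplus_{n\in S_\ell}\pi_*\mcO_X(\lrd\sum_i n_iD_i\rrd)$ of $\mcR(\pi,D^\bullet)$ is a finitely generated $R$-algebra, since $\mbN^k=\bigcup_\ell S_\ell$ and any union of homogeneous algebra generators for the $\mcR_\ell$ then generates $\mcR(\pi,D^\bullet)$.

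\emph{Each piece is finitely generated.} Fix $\ell$; write $Y:=Y_{a(\ell)}$, $\phi:=\phi_{a(\ell)}$, with structure map $\pi_Y:Y\to U$. For $n\in S_\ell$ we have $\Delta_n\in\mcW_{a(\ell)}$, so $\phi$ is a log minimal model of $(X,A_0+\Delta_n)$ over $U$; hence $\phi_*\sum_i n_iD_i=(\sum_i n_i)\,\phi_*(K_X+A_0+\Delta_n)$ is nef over $U$, and big over $U$ since $\phi_*A_0$ is big. By the base point free theorem for three-folds in $\chr p>5$ \cite{Bir16,BW17} it is semiample over $U$, and, $\phi$ being a log minimal model, $\pi_*\mcO_X(\lrd\sum_i n_iD_i\rrd)\cong(\pi_Y)_*\mcO_Y(\lrd\phi_*\sum_i n_iD_i\rrd)$ for $n\in S_\ell$. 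Therefore $\mcR_\ell$ is, on the fixed variety $Y$, the divisorial ring attached to the finitely generated monoid $S_\ell$ and a family of divisors that are semiample over $U$; passing to the relative ample model of the cone these divisors span on $Y$ and invoking the classical finite generation of multi-section rings of relatively ample divisors (Gordan's lemma together with a Zariski-type argument, as in the proof of \cite[Corollary 1.1.9]{BCHM10}), $\mcR_\ell$ is a finitely generated $R$-algebra.

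\emph{The main point and the obstacle.} This finishes the deduction; the finiteness of the number $m$ of chambers in Theorem \ref{thm:finite-mmp} is exactly what allows the subdivision into finitely many pieces $S_\ell$, and hence the argument. The two steps that need genuine care, beyond Theorem \ref{thm:finite-mmp} itself, are the initial perturbation placing us in its hypotheses (splitting off the general ample $A_0$ and passing from DLT to KLT) and the passage from \emph{nef} to \emph{semiample} on each $Y_a$. The latter, which rests on the base point free theorem for big and nef divisors on three-folds in $\chr p>5$, is the one substantive geometric input — everything else being formal once Theorem \ref{thm:finite-mmp} is in hand, which is why this is a corollary.
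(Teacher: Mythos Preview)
Your proof is correct and follows essentially the same route as the paper's: apply Theorem~\ref{thm:finite-mmp} to get a finite rational polyhedral chamber decomposition of $\mcE_{A,\pi}(V)$, reduce to a single chamber where a fixed $\phi:X\rtmap Y$ is a log minimal model for every relevant $\Delta$, identify the graded pieces with sections on $Y$ via $(K_X+\Delta)$-non-positivity, and conclude by the base-point-free theorem \cite[Theorem 1.2]{BW17} that the pushforwards are semi-ample over $U$, whence the multigraded ring is finitely generated.

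The only differences are cosmetic. You phrase the subdivision in terms of pulling back the chambers to $\mbN^k$ and invoking Gordan's lemma; the paper instead intersects the polytope $\mcC=\operatorname{conv}(\Delta_1,\dots,\Delta_k)$ with each $\mcW_{\phi_j,A,\pi}(V)$ and replaces the $\Delta_i$ by the vertices of $\mcC_j$. You are also more explicit about the passage from the given ample $A$ to a \emph{general} ample $A_0$ (by writing $A\sim_\mbQ A_0+A_1$) so as to literally meet the hypotheses of Theorem~\ref{thm:finite-mmp}; the paper glosses over this. One small imprecision: your justification that $A_1+B_i\in\mcE_{A_0,\pi}(V)$ via ``the LMMP terminates'' does not by itself give pseudo-effectivity (the MMP could end in a Mori fiber space); but this is harmless, since for $n\in\mbN^k$ with $\sum n_iD_i$ not pseudo-effective over $U$ the graded piece is zero, and the pseudo-effective locus is itself rational polyhedral by Theorem~\ref{thm:finite-mmp}(4), so it fits into the same Gordan-type subdivision.
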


Next we verify that a positive characteristic analog of \cite[Theorem 2.2]{BDPP13} holds in arbitrary dimension.
\begin{theorem}\label{thm:cone-duality}
Let $X$ be a projective variety defined over an algebraically closed field of arbitrary characteristic. Then the cone $\Eff(X)$ of pseudo-effective divisors is dual to the cone $\SNM(X)$ of strongly movable curves.\\ 	
\end{theorem}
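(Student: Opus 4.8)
The plan is to reduce the statement to its one-parameter form by abstract convexity and then to prove that form by a Fujita-approximation argument. For a projective variety $X$ the intersection pairing $N^1(X)\times N_1(X)\to\mbR$ is perfect (the two spaces are finite dimensional and the pairing is nondegenerate on each side by construction), and both $\Eff(X)$ and $\SNM(X)$ are closed convex cones; so by the bipolar theorem it is enough to prove that a class $D\in N^1(X)$ is pseudo-effective if and only if $D\cdot C\geq 0$ for every strongly movable curve $C$ — equivalently $\SNM(X)^\vee=\Eff(X)$, from which $\Eff(X)^\vee=\SNM(X)$ follows formally. The ``only if'' inclusion $\SNM(X)\subseteq\Eff(X)^\vee$ is the easy one: writing a strongly movable curve as $C=\mu_*(A_1\cap\cdots\cap A_{n-1})$ with $\mu\colon\tilde X\to X$ projective birational, $n=\dim X$, and the $A_i$ ample on $\tilde X$, the projection formula gives $D\cdot C=\mu^*D\cdot A_1\cdots A_{n-1}$; since $\mu^*D$ is again pseudo-effective (pullback of an effective Cartier divisor is effective, and $\Eff$ is the closure of the effective cone) and a pseudo-effective class meets a product of nef classes nonnegatively, we conclude $D\cdot C\geq 0$.

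For the reverse inclusion I argue by contraposition: given $D\in N^1(X)$ that is \emph{not} pseudo-effective, I will produce a strongly movable curve $C$ with $D\cdot C<0$. Fix an ample class $A$ on $X$ and put $\tau=\inf\{t\geq 0: D+tA\text{ is big}\}$. Then $\tau>0$ — otherwise $D+tA$ is big for all $t>0$ and $D=\lim_{t\to0^+}(D+tA)\in\Eff(X)$, a contradiction — the function $f(t):=\vol(D+tA)$ is continuous, vanishes identically on $[0,\tau]$ (the big cone is open, so $D+\tau A$ is not big), and is strictly positive on $(\tau,\infty)$. Since $f(\tau)=0<\tau^{n}(A^n)$, continuity lets me choose $t_0>\tau$ close to $\tau$ with $0<f(t_0)<t_0^{\,n}(A^n)$; set $\beta:=D+t_0A$, which is big.

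Now apply Fujita approximation together with the orthogonality estimate to $\beta$: there are projective birational morphisms $\mu_k\colon X_k\to X$, ample classes $A_k$ on $X_k$, and effective classes $E_k$ on $X_k$ with $\mu_k^*\beta=A_k+E_k$, with $A_k^{\,n}=\vol(A_k)\to\vol(\beta)=f(t_0)$, and with $A_k^{\,n-1}\cdot E_k\to0$. Each $C_k:=\mu_{k*}(A_k^{\,n-1})$ is a strongly movable curve class on $X$. Since $\mu_k^*D=A_k+E_k-t_0\,\mu_k^*A$, the projection formula gives $D\cdot C_k=A_k^{\,n}+E_k\cdot A_k^{\,n-1}-t_0\,\mu_k^*A\cdot A_k^{\,n-1}$, and the Khovanskii--Teissier inequality applied to the nef classes $\mu_k^*A$ and $A_k$ on $X_k$ (together with $(\mu_k^*A)^n=A^n$) yields $\mu_k^*A\cdot A_k^{\,n-1}\geq(A_k^{\,n})^{(n-1)/n}(A^n)^{1/n}$, so
\[ D\cdot C_k\ \leq\ A_k^{\,n}+E_k\cdot A_k^{\,n-1}-t_0\,(A_k^{\,n})^{(n-1)/n}(A^n)^{1/n}. \]
Letting $k\to\infty$ we obtain $\limsup_{k}D\cdot C_k\leq f(t_0)-t_0\,f(t_0)^{(n-1)/n}(A^n)^{1/n}=f(t_0)^{(n-1)/n}\bigl(f(t_0)^{1/n}-t_0(A^n)^{1/n}\bigr)<0$ by the choice of $t_0$. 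Hence $D\cdot C_k<0$ for all $k\gg0$, with $C_k$ strongly movable, which completes the argument.

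The part I expect to require the most care is verifying that every positivity input used above is genuinely available in arbitrary characteristic with no recourse to resolution of singularities. Continuity of the volume function and the Khovanskii--Teissier inequalities are characteristic-free; the delicate point is Fujita approximation and, especially, the orthogonality estimate $A_k^{\,n-1}\cdot E_k\to0$, whose classical proof is intertwined with the differentiability of the volume function. I would establish these by the Okounkov-body techniques of Lazarsfeld--Musta\c{t}\u{a}, which work over an arbitrary field; one must also note that in dimension $\geq 4$ the models $X_k$ produced by Fujita approximation are merely normal rather than smooth, so the definition of ``strongly movable curve'' has to be read as allowing normal birational models (which is harmless, since the curve class $\mu_{k*}(A_k^{\,n-1})$ is what the construction actually needs).
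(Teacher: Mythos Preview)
Your argument is correct and follows essentially the same route as the paper: both proofs rest on Fujita approximation in arbitrary characteristic, the orthogonality estimate $(a^{n-1}\cdot e)^2\leq C\bigl(\vol(\xi)-\vol(a)\bigr)$, the Khovanskii--Teissier inequality, and continuity of volume; this is exactly the BDPP/Lazarsfeld scheme, which the paper verifies goes through in positive characteristic via Takagi's Fujita approximation \cite{Tak07}.

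The only difference is cosmetic framing. The paper argues by contradiction from a class $\xi$ on $\partial\Eff(X)$ lying in the interior of $\SNM(X)^*$, perturbs to the big class $\xi_\delta=\xi+\delta h$, and shows the ratio $(\xi\cdot\gamma_\delta)/(h\cdot\gamma_\delta)\to 0$ as $\delta\to 0^+$; you argue contrapositively from a non-pseudo-effective $D$, perturb to $\beta=D+t_0A$ with $t_0$ just above the pseudo-effective threshold $\tau$, and extract $D\cdot C_k<0$ directly. These are two parameterizations of the same limiting procedure. Two small remarks: (i) for the orthogonality estimate you invoke, the paper simply cites \cite[Theorem 11.4.21]{Laz04b} and checks its proof is characteristic-free given Takagi's result, so you need not go through Okounkov bodies; (ii) the paper's definition of strongly movable curve already allows an arbitrary projective birational model $X'\to X$ with no smoothness or normality hypothesis, so your closing caveat about the models produced by Fujita approximation is unnecessary here.
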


\begin{remark}
	We note that this theorem is believed to be known among the experts, while the actual proof in full generality (the $\chr p>0$ version) never appeared in any literature as far as we know. In the case when $X$ is a \emph{smooth} projective variety in characteristic $p>0$, the proof of Theorem \ref{thm:cone-duality} is outlined by Fulger and Lehmann in \cite[Theorem 2.22]{FL17} based on the proof of \cite{BDPP13}. So if we assume the existence of \emph{resolution of singularities} in characteristic $p>0$, then Theorem \ref{thm:cone-duality} will be a formal consequence of \cite[Theorem 2.22]{FL17}. In particular, when $\dim X\<3$, Theorem \ref{thm:cone-duality} follows from Fulger and Lehmann's result.\\
	In our proof we verify that the proof presented in \cite[Theorem 11.4.19]{Laz04b} (which does not assume resolution of singularities) works in positive characteristic with the help of Takagi's Fujita Approximation Theorem \cite{Tak07} in characteristic $p>0$.\\   
\end{remark}

As an application of Theorem \ref{thm:cone-duality} we give a criterion for the pseudo-effectiveness of $K_X$ in terms of the existence of an \emph{algebraic family of rational curves} on $X$. This answers partially a question of Campana \cite[Question 12.1]{Cam16} in arbitrary characteristic.
\begin{theorem}\label{thm:covering-rational-curves}
	Let $X$ be a smooth projective variety defined over an algebraically closed field of arbitrary characteristic. Then $K_X$ is not pseudo-effective if and only if there exists an algebraic family of $K_X$-negative rational curves covering a dense subset of $X$.\\
	\end{theorem}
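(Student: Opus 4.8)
The plan is to deduce Theorem \ref{thm:covering-rational-curves} from the duality of $\Eff(X)$ and $\SNM(X)$ established in Theorem \ref{thm:cone-duality}, together with the bend-and-break technology of Mori that is available in arbitrary characteristic (in fact it was Mori's original setting). The easy direction is well known: if $X$ carries an algebraic family of $K_X$-negative rational curves whose members sweep out a dense subset of $X$, then a general member $C$ of this family moves in a family covering $X$, hence $C$ is a movable curve (even strongly movable, after passing to a suitable resolution/compactification of the family as in the proof of Theorem \ref{thm:cone-duality}), and $K_X\cdot C<0$. By Theorem \ref{thm:cone-duality} the class $[C]\in\SNM(X)$ pairs non-negatively with every pseudo-effective divisor, so $K_X\cdot C<0$ forces $K_X\notin\Eff(X)$.

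For the converse, suppose $K_X$ is not pseudo-effective. By Theorem \ref{thm:cone-duality} the cone $\SNM(X)$ is dual to $\Eff(X)$, and since $K_X\notin\Eff(X)$ there must exist a strongly movable curve class $\alpha\in\SNM(X)$ with $K_X\cdot\alpha<0$; moreover, since $\SNM(X)$ is generated by classes of the form $\mu_*(\tilde H_1\cap\cdots\cap\tilde H_{n-1})$ for a birational modification $\mu:\tilde X\to X$ and ample divisors $\tilde H_i$, we may take $\alpha=[C]$ for an actual irreducible curve $C$ moving in a family $\{C_t\}$ that dominates $X$, with $K_X\cdot C<0$. The task is then to replace the (possibly high-genus) covering family $\{C_t\}$ by a covering family of \emph{rational} curves. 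This is exactly the output of Mori's bend-and-break: through a general point $x\in X$ we have a curve $C_x$ with $K_X\cdot C_x<0$ that deforms while keeping $x$ fixed (because the family covers $X$, the dimension count $\dim_{[C_x]}\mathrm{Hom}(C_x,X)\ge -K_X\cdot C_x+(1-g)\dim X$ combined with the covering property gives positive-dimensional deformations through $x$); bending and breaking then produces a rational curve through $x$ with $-K_X\cdot(\text{rational curve})>0$, and these rational curves sweep out a dense subset of $X$. Standard Chow-scheme / Hom-scheme arguments (as in Koll\'ar, \emph{Rational Curves on Algebraic Varieties}, Chapter II–IV, all valid in arbitrary characteristic) package these into a single algebraic family of $K_X$-negative rational curves covering a dense subset of $X$.

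The main obstacle, and the step requiring the most care, is the passage from the existence of a strongly movable \emph{class} $\alpha$ with $K_X\cdot\alpha<0$ to an honest covering family of curves on $X$ (as opposed to on a birational modification $\tilde X$) on which $K_X$ is negative and to which bend-and-break applies. One has to push the family down from $\tilde X$ to $X$ and control the intersection number $K_X\cdot C=\mu^*K_X\cdot\tilde C$; since $\mu^*K_X=K_{\tilde X}-E$ for an effective $\mu$-exceptional $E$, and the curves $\tilde C$ move in a covering family they can be taken to meet $E$ properly, so $K_X\cdot C<0$ is preserved. After that, one must ensure the deformation-theoretic lower bound on $\dim\mathrm{Hom}$ actually yields deformations \emph{fixing a general point}, which is where the covering hypothesis is used, and then invoke bend-and-break; the characteristic-$p$ subtlety (separability of the evaluation map, Frobenius twists) is handled exactly as in Mori's and Koll\'ar's treatment and causes no new difficulty here because we only need \emph{some} rational curve through a general point, not a free one. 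Finally, a routine Noetherian argument over the relevant Hom/Chow schemes assembles the pointwise rational curves into the desired algebraic family, completing the proof.
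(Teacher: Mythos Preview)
Your overall strategy matches the paper's: use cone duality to obtain a covering family of $K_X$-negative curves, apply bend-and-break to produce rational curves through general points, then assemble these into a single algebraic family. The paper streamlines the middle step by quoting Miyaoka's theorem \cite[Theorem 1.1]{Miy95}, which packages bend-and-break together with Mori's Frobenius amplification and outputs, for a fixed very ample $H$, rational curves $C'$ through every point of $\bigcup_t C_t$ with $H\cdot C'$ bounded by $\frac{2\dim X\cdot(H\cdot\gamma)}{-K_X\cdot\gamma}$.

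There is, however, a genuine gap in your justification of bend-and-break. You claim that ``the dimension count $\dim_{[C_x]}\operatorname{Hom}(C_x,X)\ge -K_X\cdot C_x+(1-g)\dim X$ combined with the covering property gives positive-dimensional deformations through $x$''. This is not correct: the covering family may consist of curves of arbitrarily high genus, and for such $C_x$ the quantity $-K_X\cdot C_x+(1-g)\dim X$ can be very negative even though $K_X\cdot C_x<0$, so the estimate yields nothing about deformations fixing $x$. The covering property only says a curve of the family passes through $x$; it does not supply extra deformations. The essential mechanism---which you relegate to a parenthetical ``characteristic-$p$ subtlety''---is Mori's Frobenius trick: composing $f:C\to X$ with the $e$-th iterate of Frobenius multiplies $-K_X\cdot C$ by $p^e$ while leaving $g(C)$ unchanged, and only after this amplification does the dimension estimate force positive-dimensional deformations fixing a point (in characteristic $0$ one reduces modulo $p$). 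This is not a side issue but the heart of the argument. Relatedly, your final ``routine Noetherian argument'' is not routine without the \emph{bounded-degree} output that Miyaoka's theorem provides: the paper uses this bound to conclude that the rational curves fall into finitely many numerical classes, so one class already sweeps out a dense set and corresponds to a single component of $\operatorname{Hom}_d(\mbP^1,X)$. You should make that degree bound, and its role in the Hom-scheme step, explicit. (Also, your detour through $\mu^*K_X=K_{\tilde X}-E$ is unnecessary and not obviously valid since $\tilde X$ need not be smooth; the projection formula $K_X\cdot C=K_X\cdot\mu_*\tilde C$ already gives $K_X\cdot C<0$ directly from $K_X\cdot\alpha<0$.)
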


An immediate corollary of this result is the following sufficient condition for uniruledness in positive characteristic.
\begin{corollary}\label{cor:uniruled-criterion}
	Let $X$ be a smooth projective variety defined over an algebraically closed field of arbitrary characteristic. If $K_X$ is not pseudo-effective, then $X$ is uniruled.
\end{corollary}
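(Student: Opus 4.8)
The plan is to read this off directly from Theorem \ref{thm:covering-rational-curves}. Since $K_X$ is not pseudo-effective, that theorem furnishes an algebraic family of $K_X$-negative rational curves whose union is dense in $X$: concretely an irreducible variety $T$, a family of curves $\mathcal{C}\to T$ whose general fiber is a rational curve, together with an evaluation morphism $e\colon\mathcal{C}\to X$ that is dominant. The task is then purely to translate ``dense family of rational curves'' into ``uniruled''.

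First I would shrink $T$ to a dense open subset over which the fibers $C_t$ of $\mathcal{C}\to T$ are irreducible rational curves, pass to the normalization of $\mathcal{C}$ (and of the general fiber), and shrink $T$ again so that the general fiber of the resulting family $\mathcal{C}'\to T$ is isomorphic to $\mathbb{P}^1$; the composite $\mathcal{C}'\to\mathcal{C}\xrightarrow{e}X$ remains dominant on this locus. Choosing a desingularization of a suitable component of the total space one obtains a variety $Y$ with $\dim Y=\dim X-1$ and a dominant rational map $Y\times\mathbb{P}^1\dashrightarrow X$ which does not factor through the projection to $Y$; equivalently, for a general closed point $x\in X$ there is a $t\in T$ and a nonconstant morphism $\mathbb{P}^1\to X$ (the normalization of $C_t\hookrightarrow X$) whose image contains $x$. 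This is exactly the statement that $X$ is uniruled.

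I do not expect any real obstacle here: the argument is essentially a bookkeeping of definitions. The only point that warrants a sentence of care is the behaviour of the family in characteristic $p$ — the general member $C_t$ need not be a \emph{smooth} rational curve, only a rational one — but since uniruledness requires only the existence of a (possibly singular) rational curve through a general point, and since $k$ is algebraically closed so that the passage from a dominant family over $T$ to a rational curve through the general point of $X$ is legitimate, these subtleties do not intervene. Thus Corollary \ref{cor:uniruled-criterion} follows.
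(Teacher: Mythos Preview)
Your argument is correct. The difference from the paper is that you work from the \emph{statement} of Theorem~\ref{thm:covering-rational-curves} (a dominating algebraic family of rational curves) and then convert this into uniruledness by the bookkeeping you describe, ultimately resting on the characterization ``rational curve through a general closed point $\Rightarrow$ uniruled'' over an algebraically closed field. The paper instead reaches back into the \emph{proof} of Theorem~\ref{thm:covering-rational-curves}: there the family was built via the Hom scheme, so it is already of the form $\mbP^1\times_k V$ with $V$ irreducible and the evaluation map $\ev:\mbP^1\times_k V\to X$ dominant, which is the definition of uniruled on the nose (the paper cites \cite[Remark~4.2(2)]{Deb01}). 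The paper's route is thus a one-line observation because the Hom-scheme construction has already straightened the family into a $\mbP^1$-bundle; your route is more self-contained in that it uses only the statement of the theorem, but pays for this with the extra normalization/shrinking step. One small caution: the intermediate claim ``shrink $T$ so the general fiber of $\mcC'\to T$ is isomorphic to $\mbP^1$'' is not automatic in characteristic $p$ (there is no generic smoothness, so the generic fiber of the normalized total space need not be smooth over $k(T)$); however, since you correctly fall back on the rational-curve-through-a-general-point formulation, this does not affect the conclusion.
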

Note that the converse of this statement is true in $\chr 0$ and false in $\chr p>0$.\\

We then prove a version of Batyrev's conjecture on the structure of nef cone of curves on $3$-folds in characteristic $p>5$.
\begin{theorem}\label{thm:nef-cone-structure}
 Let $(X, \Delta)$ be a projective DLT pair of dimension $3$ in char $p>5$. Then there are countably many $(K_X+\Delta)$-negative movable curves $C_i$ such that
\[\NE(X)_{K_X+\Delta\>0}+\NM(X)=\NE(X)_{K_X+\Delta\>0}+\overline{\sum\mbR_{\>0}[C_i]}.\]

The rays $\mbR_{\>0}[C_i]$ only accumulate along the hyperplanes which support both $\NM(X)$ and $\NE(X)_{K_X+\Delta\>0}$.\\ 
\end{theorem}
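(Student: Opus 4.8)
The plan is to follow the strategy of Lehmann \cite{Leh12}, replacing his appeals to \cite{BCHM10} with the positive-characteristic finiteness statement in Theorem \ref{thm:finite-mmp} and the MMP results available on $3$-folds in $\chr p>5$. The overarching idea is to pass from a DLT pair to a KLT pair, decompose the relevant portion of the effective cone into finitely many rational polytopes coming from the finitely many log minimal models, and on each such polytope produce a single movable curve witnessing the required negativity. Concretely, first I would reduce to the case where $(X,\Delta)$ is KLT: perturb $\Delta$ slightly (using that the statement is about the $(K_X+\Delta)$-negative part of $\NM(X)$ and small perturbations of $\Delta$ only move the supporting hyperplanes a little), or work with a small $\mbQ$-factorialization and a DLT-to-KLT modification, so that Theorem \ref{thm:finite-mmp} applies directly. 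Here one must be a little careful that $\mbQ$-factoriality is available, which it is for $3$-folds in $\chr p>5$ via the MMP papers cited in the introduction.

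\textbf{Setting up the polytopal decomposition.} Next I would fix an ample $\mbQ$-divisor $A$ and a finite-dimensional rational affine subspace $V\subset\WDiv_\mbR(X)$ containing $\Delta$ (or $\Delta - A$), and apply Theorem \ref{thm:finite-mmp} to the morphism $X\to\Spec k$ (i.e. $U=\pt$). This yields birational contractions $\phi_i:X\bir Y_i$, $1\le i\le m$, and a decomposition $\mcE_{A,\pi}(V)=\bigcup\mcW_i$ into rational polytopes, where on $\mcW_i$ the map $\phi_i$ is a log minimal model. For each $i$, the model $Y_i$ is a Mori dream-type object: $-(K_{Y_i}+\phi_{i*}\Delta_t)$ is nef for $\Delta_t$ in the interior of $\mcW_i$... actually, $K_{Y_i}+\phi_{i*}\Delta_t$ is nef. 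The key extraction step is: on $Y_i$, run a $(K_{Y_i}+\phi_{i*}\Delta)$-MMP to reach either a minimal model or a Mori fiber space; in the latter case, the contracted extremal ray gives a rational curve $C$ with $(K_{Y_i}+\phi_{i*}\Delta)\cdot C<0$ which is movable on $Y_i$ (a fiber of a fibration sweeps out the variety), and pushing it back via $\phi_i^{-1}$ produces a movable curve on $X$. One then argues, as in \cite{Leh12}, that these finitely-many-per-polytope curves — taken over the countably many rational polytopes obtained as $\Delta$ varies (or via a countable exhaustion of the effective cone) — generate the movable cone modulo the $(K_X+\Delta)\ge 0$ part, giving the stated equality.

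\textbf{The accumulation statement} requires pinning down where the rays $\mbR_{\ge0}[C_i]$ can cluster. Each $C_i$ lies on a face of $\NM(X)$ determined by a movable divisor class, and its $(K_X+\Delta)$-negativity is uniform on each polytope $\mcW_i$; finiteness of the polytopes in any region bounded away from $(K_X+\Delta)^\bot$ forces any accumulation point of the rays to lie on a hyperplane that simultaneously supports $\NM(X)$ (because the $C_i$ are movable, hence lie on the boundary of the dual $\Mov$ cone paired with the pseudo-effective cone) and supports $\NE(X)_{K_X+\Delta\ge0}$ (because negativity degenerates only as one approaches $(K_X+\Delta)^\bot$, and accumulation happens along the boundary between the negative and non-negative parts). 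Making this precise amounts to a compactness argument on the relevant slice of $\NM(X)$ together with the local finiteness of the chamber decomposition from Theorem \ref{thm:finite-mmp}.

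\textbf{The main obstacle} I anticipate is not the cone-theoretic bookkeeping but verifying that every ingredient of Lehmann's argument that implicitly uses characteristic-zero MMP technology — existence and termination of the relevant $3$-fold MMP with scaling, existence of Mori fiber space outputs, the behavior of movable curves under birational contractions and under the extremal contractions — is legitimately available in $\chr p>5$ for $3$-folds, citing \cite{HX15}, \cite{Bir16}, \cite{BW17}, \cite{Wal17}. In particular one needs that a $K$-negative extremal contraction of fiber type on a $\mbQ$-factorial terminal (or DLT) $3$-fold in $\chr p>5$ exists and that its general fiber is covered by rational curves; this is where the characteristic restriction $p>5$ is genuinely used, and where care is needed that the log minimal models $Y_i$ remain in the category where these theorems apply. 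Once that dictionary is in place, the rest follows the template of \cite{Leh12} essentially verbatim, with Theorem \ref{thm:finite-mmp} supplying the finiteness that \cite{BCHM10} supplies in characteristic zero.
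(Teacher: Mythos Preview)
Your high-level plan --- follow Lehmann's template, replacing \cite{BCHM10} by Theorem~\ref{thm:finite-mmp} and the $3$-fold MMP in $\chr p>5$ --- is exactly what the paper does. But two of your concrete steps are genuinely wrong, not just imprecise.

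\textbf{The extraction of movable curves.} You propose to run a $(K_{Y_i}+\phi_{i*}\Delta)$-MMP on $Y_i$ and harvest a curve from a resulting Mori fiber space. This cannot work: by definition of $\mcW_i$, the divisor $K_{Y_i}+\phi_{i*}\Delta_t$ is already nef for every $\Delta_t\in\mcW_i$, so the MMP is trivial and no fiber space appears. The correct mechanism, which you have not identified, is to work with \emph{bounding divisors}: a divisor $D$ supporting a coextremal ray lies on the boundary of $\Eff(X)$, and (by \cite[Lemma~4.3]{Leh12}) can be written as $\tfrac{1}{\delta}D=K_X+\Delta+A_D$ for some ample $A_D$. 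For such a boundary class the ample model $X'\to Z$ of $K_X+\Delta+A_D$ is an honest fibration with $\dim Z<\dim X'$ (this is Lemma~\ref{lem:ltm-to-mfs}), and a general curve in a general fiber, pulled back along $\phi_i^{-1}$, is the desired movable curve. Finiteness of these curves then comes from Theorem~\ref{thm:finite-mmp} applied to a suitable $V$ (Propositions~\ref{pro:trimed-finiteness}, \ref{pro:nbhd-lemma}, \ref{pro:v-bounding-divisor}), not from the raw polytope decomposition $\mcE_{A,\pi}(V)=\bigcup\mcW_i$ you describe.

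\textbf{The source of countability.} You write ``countably many rational polytopes obtained as $\Delta$ varies (or via a countable exhaustion of the effective cone)''. But $\Delta$ is fixed, and the effective cone is not what is being exhausted. The actual device (already in \cite{Leh12}) is to choose a nested sequence of closed convex cones $V_j\subset N_1(X)$ containing $\NE(X)_{K_X+\Delta=0}\setminus\{0\}$ in their interiors with $\bigcap_j V_j=\NE(X)_{K_X+\Delta=0}$, prove a \emph{finite} statement for each $V_j$ (Theorem~\ref{thm:dlt-case}: $\NE(X)_{K_X+\Delta\ge0}+V_j+\NM(X)$ is generated by finitely many movable curves modulo $\NE(X)_{K_X+\Delta\ge0}+V_j$), and then take the countable union over $j$. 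Without the auxiliary cones $V_j$ you have no control near $\NE(X)_{K_X+\Delta=0}$, and this is precisely what governs both the closure in the displayed equality and the accumulation statement. Your accumulation paragraph gestures at the right compactness idea but cannot be made to work until the $V_j$ are in place.
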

 
Finally we prove the finiteness of co-extremal rays of the nef cone of curves for varieties of arbitrary dimension over $\mbC$. We use Koll\'ar's effective base-point free theorem and the boundedness of \emph{$\ve$-log canonical} log Fano varieties (formerly known as the `BAB Conjecture') recently proved by Birkar \cite{Bir16sep}. This gives a complete proof of Batyrev's conjecture in full generality in characteristic $0$. We note that the finiteness of co-extremal rays was proved in \cite{Ara10} for terminal $3$-folds over $\mbC$, and a weaker version of it is also proved in \cite{Leh12} on arbitrary dimensions over $\mbC$.\\
\begin{theorem}\label{thm:nef-cone-finiteness}
	Let $(X, \Delta\>0)$ be a projective KLT pair over $\mbC$. 
	\begin{enumerate}
	\item There are countably many $(K_X+\Delta)$-negative movable curves $C_i$ such that 
	\[\NE(X)_{K_X+\Delta\>0}+\NM(X)=\NE(X)_{K_X+\Delta\>0}+\sum\mbR_{\>0}[C_i].  \]
	\item For any ample $\mbR$-divisor $H\>0$
	\[\NE(X)_{K_X+\Delta\>0}+\NM(X)=\NE(X)_{K_X+\Delta+H\>0}+\sum_{i=1}^N\mbR_{\>0}[C_i].  \] 	
	\end{enumerate}
	\end{theorem}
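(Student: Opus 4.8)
## Proof Strategy for Theorem \ref{thm:nef-cone-finiteness}

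The plan is to adapt Lehmann's strategy from \cite{Leh12} — which rests on \cite{BCHM10} and on the duality of movable curves and pseudo-effective divisors — and to upgrade its conclusion to the present, stronger form by feeding in Birkar's solution of the BAB conjecture \cite{Bir16sep} together with Koll\'ar's effective base-point free theorem. Both parts will follow from one key statement: there are countably many $(K_X+\Delta)$-negative movable curves $C_i$ with $\mcC:=\NE(X)_{K_X+\Delta\>0}+\NM(X)=\NE(X)_{K_X+\Delta\>0}+\overline{\sum\mbR_{\>0}[C_i]}$, whose rays accumulate only along $(K_X+\Delta)^\bot$. Indeed, part (1) is this statement once one notes that the closure is superfluous (a limit ray lies in $(K_X+\Delta)^\bot\cap\NE(X)$, hence already in the first summand), and part (2) follows similarly: for ample $H\>0$ the set $\{\gamma\in\NE(X):|\gamma|=1,\ (K_X+\Delta+H)\cdot\gamma<0\}$ lies in $\{(K_X+\Delta)\cdot\gamma\<-c\}$ for some $c=c(H)>0$, so by the accumulation statement only finitely many of the $C_i$ are needed in the region where $K_X+\Delta+H$ is negative, the rest of $\mcC$ being accounted for by $\NE(X)_{K_X+\Delta+H\>0}$. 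Thus the work is to prove the key statement, and especially its finiteness content.

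The ``cone theorem'' part — that $\mcC$ equals the closure of $\NE(X)_{K_X+\Delta\>0}$ together with the movable $(K_X+\Delta)$-negative extremal rays, which accumulate only along $(K_X+\Delta)^\bot$ — follows from the approach of Araujo \cite{Ara10} and Lehmann \cite{Leh12}. Fix such an extremal ray, spanned by $\beta$. Since an extremal ray of a sum of two pointed closed cones is extremal in one of the summands and $(K_X+\Delta)\cdot\beta<0$, the ray is extremal in $\NM(X)$; and $\NM(X)=\SNM(X)=\Eff(X)^\vee$ by Theorem \ref{thm:cone-duality} together with the obvious inclusions $\SNM(X)\subset\NM(X)\subset\Eff(X)^\vee$. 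Hence there is a pseudo-effective class $D_\beta$ on the boundary of $\Eff(X)$ with $D_\beta\cdot\beta=0$ that is non-negative on $\NM(X)$. One then \emph{realizes} $\beta$ geometrically: perturbing $D_\beta$ (by $H$) and running a $(K_X+\Gamma)$-minimal model program with scaling for a klt boundary $\Gamma$ of controlled shape, one reaches a birational contraction $\phi:X\rtmap Y$ to a $\mbQ$-factorial klt variety and a Mori fibre space $g:Y\to Z$ such that $\beta$ is proportional to the class, pushed back to $X$ along $\phi$, of a minimal covering rational curve in a general fibre $F$ of $g$. Throughout this step \cite{BCHM10} supplies termination and the finiteness of the models occurring over any fixed rational affine space of boundaries (the characteristic-zero analogue of Theorem \ref{thm:finite-mmp}).

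The remaining — and new — point is that outside any fixed neighbourhood of $(K_X+\Delta)^\bot$ only finitely many such rays occur. As $\beta$ varies the construction above yields an a priori infinite family of Mori fibre spaces $g:Y\to Z$, but their general fibres $F$, endowed with the restricted boundary $\Gamma_F$, are log Fano varieties of dimension $<\dim X$ which — by the controlled shape of $\Gamma$ — are $\varepsilon$-log canonical for a single $\varepsilon=\varepsilon(X,\Delta)>0$. By Birkar's theorem \cite{Bir16sep} the pairs $(F,\Gamma_F)$ form a bounded family; combining this with Koll\'ar's effective base-point free theorem, which bounds the steps of the program $\phi$ and the contraction $g$, one obtains a uniform bound (depending only on $(X,\Delta)$) on $A\cdot C_i$ for a fixed ample $A$ on $X$. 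A set of movable classes that is bounded, consists of extremal rays of $\mcC$, and is bounded away from $(K_X+\Delta)^\bot$ is then finite; taking one honest curve $C_i$ per ray and combining with the ``cone theorem'' part gives the key statement, and hence Theorem \ref{thm:nef-cone-finiteness}.

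The principal obstacle is the realization step above: executing it with $\mbR$-coefficients, and armed only with the \emph{pseudo-effective} (and possibly non-big) class $D_\beta$, so that the $(K_X+\Gamma)$-minimal model program genuinely terminates in a Mori fibre space recovering $\beta$ — while keeping $\Gamma$ constrained enough that \cite[Corollary 1.1.5]{BCHM10} applies to the pertinent finite-dimensional space of boundaries and that the fibres $F$ are $\varepsilon$-log canonical for one $\varepsilon$ independent of $\beta$. A secondary, more bookkeeping-heavy, difficulty is passing from boundedness of the \emph{fibres} $F$ to finiteness of the \emph{numerical classes} $[C_i]$ on $X$: because $\phi$ may distort intersection numbers, one needs the effective control of Koll\'ar's theorem (or an argument on each of the finitely many relevant models $Y$) to keep the degrees of the $C_i$ under control.
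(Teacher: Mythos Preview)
Your proposal is correct and follows essentially the same route as the paper: cite \cite{Ara10} or \cite{Leh12} for part (1) and the accumulation statement, then for part (2) realize each coextremal ray by a curve on a general fibre of a Mori fibre space and use BAB \cite{Bir16sep} together with Koll\'ar's effective base-point free theorem to bound its degree. The one place where the paper is cleaner than your outline is your ``secondary difficulty'': rather than trying to bound $A\cdot C_i$ directly (where $\phi$ could distort things), the paper bounds $-(K_X+\Delta)\cdot C$ instead (Proposition~\ref{pro:boundedness}), since the general curve $C'\subset F$ lifts isomorphically through $\phi^{-1}$ and so $-(K_X+\Delta)\cdot C = -(K_F+\Delta_F)\cdot C'\le N^{d-1}(-K_F)^d$ is controlled purely on the fibre by BAB and Koll\'ar; then $(K_X+\Delta+H)\cdot C<0$ immediately gives $H\cdot C\le G$, placing the $C_i$ in a bounded family.
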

 
{\bf Some remarks about the paper.}
	In writing this paper we have tried to give as much detail as possible even if the arguments are very similar to the characteristic $0$ case. This is for convenience, future reference, and to avoid any unpleasant surprises having to do with positive characteristic. The paper is organized in the following manner: 
1.1 and 1.2 are proved in Section 4; 1.3, 1.5 and 1.6 are in Section 5; 1.7 is in Section 6, and finally we prove 1.8 in Section 7. In Section 4 and 6 we work over a field of $\chr p>5$, in Section 5 we work over fields of arbitrary characteristic, and in Section 7 we work in $\chr 0$.\\

{\bf Acknowledgement.}  This paper originated in a conversation with Professor Burt Totaro. I would like to thank him for our fruitful discussions. I am also grateful to him for answering my questions, reading an early draft and giving valuable suggestion to improve the presentation of the paper. My sincerest gratitude goes to Professor Christopher Hacon for answering several questions, carefully reading some parts of an early draft and point out few errors. I would also like to thank Hiromu Tanaka for suggesting a quicker proof of the Theorem \ref{thm:general-bertini}. I would like to thank Joe Waldron for lot of useful discussions and the referee(s) for pointing out typos and giving valuable suggestions.\\

\section{Preliminaries}
We work with $\mbR$-Cartier divisors and use standard notations and terminologies from \cite{KM98}. We abbreviate Kawamata log terminal (resp. purely log terminal,  divisorially log terminal,  and log canonical) as KLT (resp. PLT, DLT and LC). By abuse of language we also say that $K_X+\Delta$ is KLT (resp. PLT, DLT and LC). A birational map $\phi:X\rtmap Y$ is called a \emph{birational contraction} if $\phi$ does not extract any divisor, i.e., $\phi^{-1}:Y\rtmap X$ does not contract any divisor. A projective morphism $\pi:X\to U$ is called a \emph{projective contraction} if $f_*\mcO_X=\mcO_U$. Throughout the whole paper we assume that our ground field $k$ is \textbf{algebraically closed}.\\

\begin{definition}
	Let $\phi:X\rtmap Y$ be a birational contraction of normal quasi-projective varieties. Let $D$ be a $\mbR$-Cartier divisor on $X$ such that $D'=\phi_*D$ is also $\mbR$-Cartier. We say $\phi$ is \emph{$D$-non-positive} (respectively \emph{$D$-negative}) if for some common resolution $p:W\to X$ and $q:W\to Y$, we can write 
	\[
		p^*D=q^*D'+E,
	\]
where $E\>0$ is $q$-exceptional (respectively $E\>0$ is $q$-exceptional and the support of $E$ contains the strict transform of the $\phi$-exceptional divisors).\\		
\end{definition}

\begin{definition}
	Let $\pi:X\to U$ be a projective morphism between normal quasi-projective varieties. Suppose that $K_X+\Delta$ is LC and $\phi:X\rtmap Y$ is a birational contraction of normal quasi-projective varieties over $U$ and $Y$ is projective over $U$. Set $\Gamma=\phi_*\Delta$.
	\begin{enumerate}
		\item Y is a \emph{weak log canonical model} for $K_X+\Delta$ over $U$ if $\phi$ is $(K_X+\Delta)$-non-positive and $K_Y+\Gamma$ is nef over $U$.
		\item $Y$ is a \emph{log minimal model} for $K_X+\Delta$ over $U$ if $\phi$ is $(K_X+\Delta)$-negative, $K_Y+\Gamma$ is DLT and nef over $U$, and $Y$ is $\mbQ$-factorial.\\
	\end{enumerate}	
\end{definition} 

\begin{remark}
	Note that our definition of \emph{log minimal model} is same as the \emph{log terminal model} in \cite[Definition 3.6.7]{BCHM10}.\\	
\end{remark}

\begin{definition}
Let $\pi:X\to U$ be a projective morphism of normal quasi-projective varieties and let $D$ be an $\mbR$-Cartier divisor on $X$.\\
\begin{enumerate}
	\item We say that a birational contraction $f:X\rtmap Y$ over $U$ is a \emph{semi-ample model} of $D$ over $U$ if $f$ is $D$-non-positive, $Y$ is normal and projective over $U$ and $H=f_*D$ is semi-ample over $U$.
	\item We say that a rational map $g:X\rtmap Z$ over $U$ is the \emph{ample model} for $D$ over $U$ if $Z$ is normal and projective over $U$ and there is an ample divisor $H$ over $U$ on $Z$ such that if $p:W\to X$ and $q:W\to Z$ resolve $g$, then $q$ is a contraction morphism and we may write $p^*D\sim_{\mbR, U}q^*H+E$, where $E\>0$ and for every $B\in|p^*D/U|_\mbR$, then $B\>E$.
\end{enumerate}	
\end{definition}

\begin{definition}
	Let $\pi:X\to U$ be a projective morphism between two normal quasi-projective varieties. Let $V$ be a finite dimensional affine subspace of the real vector space $\WDiv_\mbR(X)$ of Weil divisors of $X$. Fix an $\mbR$-divisor $A\>0$ and define
\begin{align*}
	V_A &=\{\Delta:\Delta=A+B, B\in V\},\\
	\mcL_A(V) &=\{\Delta=A+B\in V_A: K_X+\Delta \mbox{ is LC and } B\>0\},\\
	\mcE_{A, \pi}(V) &=\{\Delta\in\mcL_A(V): K_X+\Delta \mbox{ is pseudo-effective over } U\},\\
	\mcN_{A, \pi}(V) &=\{\Delta\in\mcL_A(V): K_X+\Delta \mbox{ is nef over } U\}.
	\end{align*}

Given a birational contraction $\phi:X\rtmap Y$ over $U$, define
\[
	\mcW_{\phi, A, \pi}(V)=\{\Delta\in\mcE_{A, \pi}(V): \phi \mbox{ is a weak log canonical model for } (X, \Delta) \mbox{ over } U\},
\]
and given a rational map $\psi:X\rtmap Z$ over $U$, define
\[
	\mcA_{\psi, A, \pi}(V)=\{\Delta\in\mcE_{A, \pi}(V): \psi \mbox{ is the ample model for } (X, \Delta) \mbox{ over } U\}\\.
\]
\end{definition}



\section{Tanaka's Bertini type theorem}
Following is a generalization of Tanaka's result on Bertini-type theorems in positive characteristic \cite[Theorem 1]{Tan17}. The proof presented here is suggested by Tanaka.
\begin{theorem}\label{thm:general-bertini}
	Fix $\mbK\in\{\mbQ, \mbR\}$. Let $X$ be a projective variety over a field $k$ containing an infinite perfect subfield $k_0$ of characteristic $p>0$. We assume that log resolution exists. Let $D$ be a semi-ample $\mbK$-Cartier $\mbK$-divisor on $X$ and $Z_1, Z_2, \ldots, Z_l$ are finitely many closed subsets of $X$. Then the following hold:
	\begin{enumerate}
		\item If $(X, \Delta_1\>0), (X, \Delta_2\>0),\ldots, (X, \Delta_m\>0)$ are all KLT pairs, then there exists an effective $\mbK$-Carter $\mbK$-divisor $0\<D'\sim_\mbK D$ not containing any $Z_j, 1\<j\<l$ in its support such that $(X, \Delta_1+D'), (X, \Delta_2+D'),\ldots, (X, \Delta_m+D')$ are all KLT.\\
		\item If $(X, \Delta_1\>0), (X, \Delta_2\>0),\ldots, (X, \Delta_m\>0)$ are all LC pairs, then there exists an effective $\mbK$-Carter $\mbK$-divisor $0\<D'\sim_\mbK D$ not containing any $Z_j, 1\<j\<l$ in its support such that $(X, \Delta_1+D'), (X, \Delta_2+D'),\ldots, (X, \Delta_m+D')$ are all LC.
		\end{enumerate} 
\end{theorem}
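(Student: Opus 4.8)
The plan is to reduce the statement to a classical Bertini theorem on a log resolution and then push the conclusion back down to $X$, following Tanaka's strategy for the single-pair case in \cite[Theorem 1]{Tan17}. First I would take a simultaneous log resolution $f:Y\to X$ of $(X,\sum_{i}\Delta_i)$ together with the $Z_j$'s, so that on $Y$ we have $K_Y+\Delta_{i,Y}=f^*(K_X+\Delta_i)+E_i$ with $\Delta_{i,Y}$ and $E_i$ having simple normal crossings support, $E_i$ effective and $f$-exceptional in the KLT case (with appropriate coefficient bounds), and $\lceil -\Delta_{i,Y}\rceil$ (or the analogous LC version) controlling the singularities. The point of passing to $Y$ is that on a smooth variety with an SNC configuration fixed in advance, membership of a general member of a semi-ample linear system in "good position" is governed by an honest Bertini-type statement.

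Next I would pull back $D$ to $Y$; since $f$ is birational and $D$ is semi-ample, $f^*D$ is semi-ample on $Y$. I would then invoke Tanaka's Bertini theorem \cite[Theorem 1]{Tan17} on the smooth variety $Y$ (which is exactly where the infinite perfect subfield $k_0$ of characteristic $p$ and the existence of log resolution are used — this is why those hypotheses appear) to produce $0\le D'_Y\sim_{\mbK} f^*D$ whose support avoids the strict transforms and exceptional divisors over the $Z_j$, and which together with each fixed SNC divisor $\Delta_{i,Y}$ (and the exceptional locus) remains SNC — hence $(Y,\Delta_{i,Y}+D'_Y)$ is KLT (resp. LC) for all $i$ simultaneously. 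The only genuine upgrade over the cited theorem is the "simultaneity" over finitely many pairs, but this is free: one just enlarges the SNC configuration to be compatible with all the $\Delta_{i,Y}$ at once before applying Bertini, and a general member works for all of them.

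Then I would descend: set $D' := f_*D'_Y$. Because $f^*D - D'_Y$ is $\sim_\mbK 0$ and $D'_Y$ does not meet the exceptional locus in a bad way, one checks $D'_Y = f^*D'$ (or at least $f^*D' = D'_Y + (\text{exceptional})$ with the exceptional part absorbed correctly), so that $K_Y+\Delta_{i,Y}+D'_Y = f^*(K_X+\Delta_i+D')+E_i$ with $E_i$ still the exceptional discrepancy divisor. Since $(Y,\Delta_{i,Y}+D'_Y)$ is KLT (resp. LC) and $f$ is a log resolution of $(X,\Delta_i+D')$, discrepancy comparison gives that $(X,\Delta_i+D')$ is KLT (resp. LC). Finally $D'\sim_\mbK D$ on $X$ since linear equivalence of $\mbK$-divisors descends under pushforward by a birational contraction, and the support condition on the $Z_j$ is inherited from the support condition on $Y$. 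The main obstacle I anticipate is bookkeeping: making sure a single general $D'_Y$ can be chosen SNC with respect to the union of all the $\Delta_{i,Y}$ plus the exceptional locus plus the preimages of the $Z_j$, and verifying the coefficient/discrepancy inequalities are preserved under both the fixed $f$-exceptional shift and the addition of $D'_Y$ — i.e. that nothing tips a discrepancy from $>-1$ to $=-1$ (KLT) or from $\ge -1$ to $<-1$ (LC). The semi-ampleness of $D$ (and hence base-point-freeness of a multiple) is what lets Bertini apply; without it the argument breaks, so I would be careful to only move within $|mD'_Y|$ for $m$ making $mD$ base-point free.
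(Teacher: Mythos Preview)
Your plan is correct and follows the same overall architecture as the paper: pass to a simultaneous log resolution, invoke Tanaka's single-pair Bertini theorem upstairs, and descend. Two differences are worth noting.

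For the LC case the paper makes your ``enlarge the SNC configuration'' step explicit and efficient: it defines a single dominating boundary $\Delta_Y := (f^{-1}_*\!\sum_i \Delta_i)_{\mathrm{red}} + E$ (reduced strict transform plus reduced exceptional), applies \cite[Theorem~1]{Tan17} to the one pair $(Y,\Delta_Y)$ and the semi-ample divisor $f^*D$, and then concludes for every $i$ by the monotonicity $\Gamma_i + f^*D' \le \Delta_Y + f^*D'$. Your descent worry is unnecessary: since $f_*\mcO_Y=\mcO_X$, the linear systems for $f^*D$ on $Y$ are pulled back from $X$, so the general member produced upstairs is automatically of the form $f^*D'$ with $D'\sim_\mbK D$ on $X$; there is no exceptional correction to absorb.

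For the KLT case the paper takes a genuinely different and slicker route that avoids the coefficient bookkeeping you flag at the end. It applies the already-proved LC statement to $mD$ (any $m\ge 2$) to get $D''\sim_\mbK mD$ with every $(X,\Delta_i+D'')$ LC, sets $D':=\tfrac{1}{m}D''$, and argues by contradiction: if $(X,\Delta_i+D')$ were not KLT while $(X,\Delta_i)$ is KLT, then $(X,\Delta_i+mD')=(X,\Delta_i+D'')$ would fail to be LC. This completely sidesteps the issue of checking that a general member preserves \emph{strict} discrepancy inequalities. Your direct approach also works (choose the multiples large enough that the new coefficients are small), but the scaling trick is shorter and cleaner.
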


\begin{proof}
	Let $f:Y\to X$ be a log resolution of the pairs $(X, \Delta_i\>0)$ for all $i=1, 2,\ldots, m$ and 
	\begin{equation}\label{eqn:tanaka-bertini}
		K_Y+\Gamma_i=f^*(K_X+\Delta_i)+F_i,
	\end{equation}
where $\Gamma_i\>0$ and $F_i\>0$ do not share any common component and $f_*\Gamma_i=\Delta_i$ and $f_*F_i=0$, for all $i=1, 2,\ldots, m$.\\

 First we will deal with the log canonical case. Define a divisor $\Delta_Y$ on $Y$ as $\Delta_Y:=(f^{-1}_*(\Delta_1+\Delta_2+\cdots+\Delta_m))_{\rm{red}}+E$, where $E$ is the reduced $f$-exceptional divisor with $\Supp E=\Ex(f)$. Then $(Y, \Delta_Y\>0)$ is a LC pair. By \cite[Theorem 1]{Tan17} and its proof it follows that there exists an effective $\mbK$-divisor $0\<D'\sim_\mbK D$ not containing any $Z_j, 1\<j\<l$ in its support such that $(Y, \Delta_Y+f^*D')$ is LC. From \eqref{eqn:tanaka-bertini} we see that $\Gamma_i+f^*D'\<\Delta_Y$ and thus $(Y, \Gamma_i+f^*D')$ is LC, for all $i=1, 2,\ldots, m$. It then follows that $(X, \Delta_i+D')$ is LC for all $i=1, 2, \ldots, m$.\\

We will now work with the KLT case. The divisor $mD$ is semi-ample $\mbK$-Cartier $\mbK$-divisor on $X$. Then by the log canonical case there exists an effective $\mbK$-divisor $0\<D''\sim_\mbK mD$ not containing any $Z_j, 1\<j\<l$ in its support such that $(X, \Delta_i+D'')$ is LC for all $i=1, 2, \ldots, m$. Set $D':=\frac{1}{m}D''\sim_\mbK D$. We claim that $(X, \Delta_i+D')$ is KLT for all $i=1, 2,\ldots, m$. Since $(X, \Delta_i)$ is KLT, if $(X, \Delta_i+D')$ is not KLT then this will implies that $(X, \Delta_i+D'')$ is not LC (for $m\>2$), a contradiction. Therefore $(X, \Delta_i+D')$ is KLT for all $i=1, 2,\ldots, m$.\\

	\end{proof}

\section{Finiteness of log minimal models}
In this section we will prove the finiteness of log minimal models, namely Theorem \ref{thm:finite-mmp}. The ideas and techniques used here are based on the paper \cite{BCHM10}.\\

\begin{lemma}\label{lem:semi-ample-to-ample}
	Let $\pi:X\to U$ be a projective morphism of normal projective varieties with $\dim X=3$ and $\chr p>5$. Suppose that $(X, \Delta)$ is a KLT pair, where $\Delta$ is big over $U$.\\
	If $\phi:X\rtmap Y$ is a weak log canonical model of $K_X+\Delta$ over $U$, then
	\begin{enumerate}
		\item $\phi$ is a semi-ample model over $U$.
		\item the ample model $\psi:X\rtmap Z$ of $K_X+\Delta$ over $U$ exists, and
		\item there is a projective contraction $h:Y\to Z$ such that $K_Y+\Gamma\sim_{\mbR, U}h^*H$, for some ample $\mbR$-divisor $H$ over $U$, where $\Gamma=\phi_*\Delta$.
		\end{enumerate}
	\end{lemma}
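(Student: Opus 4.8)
The plan is to reduce everything to the base-point freeness results available for $3$-folds in $\chr p>5$. First I would recall the setup: $\phi\colon X\dashrightarrow Y$ is a weak log canonical model of $(X,\Delta)$ over $U$, so $\phi$ is $(K_X+\Delta)$-non-positive and $K_Y+\Gamma$ is nef over $U$, where $\Gamma=\phi_*\Delta$. The first step is to transfer the KLT and bigness hypotheses from $(X,\Delta)$ to $(Y,\Gamma)$: since $\phi$ is $(K_X+\Delta)$-non-positive, on a common resolution $p\colon W\to X$, $q\colon W\to Y$ we have $p^*(K_X+\Delta)=q^*(K_Y+\Gamma)+E$ with $E\ge 0$ and $q$-exceptional, and a standard negativity-lemma / discrepancy comparison shows $(Y,\Gamma)$ is again KLT; moreover $\Gamma=\phi_*\Delta$ is big over $U$ because $\phi$ is a birational contraction and bigness is preserved by pushforward along such maps (equivalently, $K_Y+\Gamma$ big over $U$ follows from $p^*(K_X+\Delta)$ being big over $U$ and $E$ being $q$-exceptional).

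Next, the key input: $K_Y+\Gamma$ is nef and big over $U$ and $(Y,\Gamma)$ is KLT of dimension $3$ in $\chr p>5$, so by the base-point free theorem in this setting (e.g.\ the results of \cite{Bir16}, \cite{BW17}, building on \cite{HX15}) $K_Y+\Gamma$ is semi-ample over $U$. This gives statement (1): $\phi$ is a semi-ample model of $K_X+\Delta$ over $U$, since $\phi$ is $(K_X+\Delta)$-non-positive, $Y$ is normal and projective over $U$, and $H'=\phi_*(K_X+\Delta)=K_Y+\Gamma$ is semi-ample over $U$. Then I would take the relative ample model of the semi-ample divisor $K_Y+\Gamma$ over $U$: there is a projective contraction $h\colon Y\to Z$ over $U$ with $Z$ normal and projective over $U$ and an ample$/U$ $\mbR$-divisor $H$ on $Z$ such that $K_Y+\Gamma\sim_{\mbR,U}h^*H$. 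This is statement (3). Setting $\psi=h\circ\phi\colon X\dashrightarrow Z$, I would verify the defining properties of an ample model: resolving $\psi$ by $p\colon W\to X$, $q'\colon W\to Z$, the composite $q'$ is a contraction morphism (as $h$ and the resolution of $\phi$ are), and $p^*(K_X+\Delta)=q^*(K_Y+\Gamma)+E\sim_{\mbR,U}(q')^*H+E$ with $E\ge 0$ $q$-exceptional, hence $q'$-exceptional; finally for any $B\in|p^*(K_X+\Delta)/U|_\mbR$ one checks $B\ge E$ using that $E$ is $q$-exceptional and $q^*(K_Y+\Gamma)$ is the pullback of a relatively semi-ample class, so $q_*B\ge 0$ is $\mbR$-linearly trivial over $U$ on the fibers of $h$, forcing $B-E$ to be effective by the negativity lemma. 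That yields statement (2), together with uniqueness of the ample model.

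The main obstacle is the semi-ampleness step: in characteristic $0$ this is the Kawamata–Shokurov base-point free theorem, but in $\chr p>5$ for $3$-folds it relies on the more delicate recent technology (the nef and big case of the base-point free theorem for KLT $3$-folds via \cite{HX15}, \cite{Bir16}, \cite{BW17}), and one has to be careful that the statement is available in the relative setting over $U$ and for $\mbR$-divisors, not just $\mbQ$-divisors — the latter typically handled by writing $K_Y+\Gamma$ as a positive combination of nef and big $\mbQ$-divisors with KLT pairs and applying the $\mbQ$-version, or by directly invoking an $\mbR$-divisor version if stated. The remaining points — preservation of KLT and bigness under the birational contraction, and the bookkeeping identifying $\psi$ as the ample model — are routine applications of the negativity lemma and the definitions, so I would keep those brief.
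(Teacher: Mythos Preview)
Your approach is essentially the same as the paper's: verify that $(Y,\Gamma)$ is KLT with $\Gamma$ big over $U$, invoke the base-point free theorem \cite[Theorem 1.2]{BW17} to get $K_Y+\Gamma$ semi-ample over $U$, and then pass to the ample model --- the paper does this last step by citing \cite[Lemma 3.6.6(3)]{BCHM10}, while you spell out the verification of the ample-model axioms by hand.

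One slip to fix: you claim $K_Y+\Gamma$ is big over $U$ (and that $p^*(K_X+\Delta)$ is big over $U$), but the hypothesis is only that $\Delta$ is big over $U$, not $K_X+\Delta$; in general $K_Y+\Gamma$ need not be big. This does not damage the argument, since \cite[Theorem 1.2]{BW17} only requires the boundary $\Gamma$ to be big, which you do correctly establish; just replace ``$K_Y+\Gamma$ nef and big'' with ``$K_Y+\Gamma$ nef and $\Gamma$ big'' when invoking the base-point free theorem.
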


\begin{proof}
	Since $K_Y+\Gamma$ is KLT and $\Gamma$ is big, it follows from \cite[Theorem 1.2]{BW17} that $K_Y+\Gamma$ is semi-ample. Part $(2)$ and $(3)$ then follow from \cite[Lemma 3.6.6(3)]{BCHM10}.\\
	
	\end{proof}


	

\begin{lemma}\label{lem:dlt-to-Q-factorization}
	Let $(X, \Delta\>0)$ be a $3$-fold projective DLT pair in $\chr p>5$. Then there exists a small birational morphism $\pi:Y\to X$ from a $\mbQ$-factorial normal projective $3$-fold $Y$ such that \[
		K_Y+\Delta_Y=\pi^*(K_X+\Delta),
	\]
and $(Y, \Delta_Y\>0)$ is DLT.		   
\end{lemma}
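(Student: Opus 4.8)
Lemma \ref{lem:dlt-to-Q-factorization} asserts the existence of a small $\mbQ$-factorialization of a DLT $3$-fold pair in characteristic $p>5$. I would prove this by the standard strategy, adapted to positive characteristic: first resolve, then run a suitable relative MMP to contract all the exceptional divisors.

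\medskip

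\textbf{Plan.} The idea is as follows. Let $f : W \to X$ be a log resolution of $(X, \Delta)$, which exists for $3$-folds in $\chr p>5$. Write
\[
K_W + \Gamma = f^*(K_X+\Delta) + E,
\]
where $\Gamma \geq 0$ and $E \geq 0$ have no common components, $f_*\Gamma = \Delta$, and $f_*E = 0$. Since $(X,\Delta)$ is DLT, the discrepancies of all the $f$-exceptional divisors are $> -1$, so we may choose the resolution so that $(W, \Gamma')$ is a $\mbQ$-factorial DLT (indeed KLT away from the strict transform of $\Delta$) pair for a suitable boundary $\Gamma'$ obtained from $\Gamma$ by adding small multiples of the $f$-exceptional divisors that are \emph{not} already in $\Gamma$. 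Concretely, pick $\ve>0$ small and set $\Gamma' = \Gamma + \ve E$; then $K_W + \Gamma' = f^*(K_X+\Delta) + (1+\ve)E$ is $f$-big relative to nothing but, more importantly, $K_W+\Gamma'$ is relatively DLT with $\Gamma'$ containing the support of $E$ appropriately. Now run the $(K_W+\Gamma')$-MMP over $X$. By the established MMP for $3$-folds in $\chr p>5$ (\cite{HX15}, \cite{Bir16}, \cite{BW17}), this MMP terminates, and since $K_W+\Gamma' \equiv_X (1+\ve)E \geq 0$ with $E$ exceptional over $X$ and covering all $f$-exceptional divisors, every step of this MMP must be a divisorial contraction or flip that eventually contracts all the components of $E$. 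The output is a $\mbQ$-factorial variety $Y$ with a small morphism $\pi : Y \to X$ and $K_Y + \Delta_Y = \pi^*(K_X+\Delta)$ by negativity, with $(Y,\Delta_Y)$ still DLT.

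\medskip

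\textbf{Key steps, in order.} (1) Take a log resolution $f:W\to X$ of $(X,\Delta)$ and record the discrepancy formula, choosing the boundary $\Gamma' = f_*^{-1}\Delta + (1-\ve)\,\Ex(f)$ (or a variant) so that $(W,\Gamma')$ is $\mbQ$-factorial DLT and $K_W+\Gamma' = f^*(K_X+\Delta) + G$ with $G\geq 0$ and $\Supp G = \Ex(f)$. (2) Run the $(K_W+\Gamma')$-MMP over $X$, invoking termination and the cone/contraction theorems for $3$-folds in $\chr p>5$. (3) Observe that since $K_W + \Gamma' \sim_{\mbR, X} G \geq 0$ is effective and $\Supp G$ equals the full exceptional locus of $f$, the MMP contracts precisely the divisors in $\Supp G$, so no divisor survives and the resulting map $\pi : Y \to X$ is small. (4) Apply the negativity lemma to conclude $K_Y + \Delta_Y = \pi^*(K_X+\Delta)$ where $\Delta_Y = \pi_*^{-1}\Delta$, and check $Y$ is $\mbQ$-factorial (preserved under MMP steps) and $(Y,\Delta_Y)$ is DLT (the MMP for a DLT pair outputs a DLT pair).

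\medskip

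\textbf{Main obstacle.} The delicate point is step (2)–(3): guaranteeing that the MMP over $X$ actually terminates and contracts \emph{all} exceptional divisors, in the positive characteristic setting. One must cite the correct existence and termination statements for the relative $3$-fold MMP in $\chr p>5$ (termination for pairs with big or with effective boundary over the base, from \cite{BW17} or \cite{HX15}), and verify that the chosen $\ve$ keeps the pair $(W,\Gamma')$ DLT so that these results apply; there is also a minor subtlety in arranging the boundary so that $G$ is \emph{exactly} supported on the exceptional locus and that the pair remains klt/dlt at each step. Once the MMP runs and terminates, the fact that the output morphism is small and crepant is formal via the negativity lemma, exactly as in the characteristic $0$ argument (cf.\ \cite{KM98}).
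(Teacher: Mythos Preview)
Your proposal is correct and follows essentially the same route as the paper: take a log resolution, perturb the boundary by a small multiple of the exceptional locus to obtain a DLT pair whose log canonical divisor is $\mbR$-linearly equivalent over $X$ to an effective divisor supported exactly on $\Ex(f)$, run the relative MMP (the paper cites \cite[Corollary 1.8]{Wal17}), and use the negativity lemma to see that all exceptional divisors are contracted. The paper's specific choice of perturbation is $\Gamma+\ve F$ with $F=\Ex(f)_{\mathrm{red}}$, which cleanly handles the subtlety you flag (your first suggestion $\Gamma+\ve E$ need not have $\Supp(E)=\Ex(f)$, but your alternative $f_*^{-1}\Delta+(1-\ve)\Ex(f)$ works just as well).
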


\begin{proof}
	Since $(X, \Delta)$ is DLT, there exists a log resolution $f:X'\to X$ of $(X, \Delta)$ such that $a(E_i, X, \Delta)>-1$ for every $f$-exceptional divisor $E_i$. We can write
	\[
		K_{X'}+\Gamma=f^*(K_X+\Delta)+E,
	\]
where $\Gamma\>0$ and $E\>0$ do not share any common component, and $f_*\Gamma=\Delta, f_*E=0$.\\

Let $F\>0$ be a reduced divisor with $\Supp F=\Ex(f)$. Then $(X', \Gamma+\ve F)$ is DLT for $0<\ve\ll 1$. Furthermore, we have
\[
	K_{X'}+\Gamma+\ve F\sim_{\mbR, f} E+\ve F\>0.
\]	
By \cite[Corollary 1.8]{Wal17} we can run a terminating $(K_{X'}+\Gamma+\ve F)$-MMP over $X$. Let $\phi:X'\rtmap Y$ be the corresponding minimal model over $X$, i.e., $K_Y+\Delta_Y+\ve\phi_*F$ is nef over $X$, where $K_Y+\Delta_Y+\ve\phi_*F=\phi_*(K_{X'}+\Gamma+\ve F)$. It is easy to see from the Negativity Lemma that $\phi$ contracts all $f$-exceptional divisors, i.e., $\phi_*F=0$; in particular, if $\pi:Y\to X$ is the structure morphism, then $\pi$ is a small birational morphism, $Y$ is $\mbQ$-factorial, $(Y, \Delta_Y\>0)$ is DLT and 
\[
	K_Y+\Delta_Y=\pi^*(K_X+\Delta).
\]

\end{proof}

\begin{remark}\label{rmk:bchm-lemmas}
	In the proof of the following results we will use Lemma 3.6.12, 3.7.2, 3.7.3, 3.7.4 and 3.7.5 from \cite{BCHM10}. Proofs of these lemmas depend on the Bertini's theorem for base-point free linear system in characteristic $0$. However, their proofs use Bertini's theorem in one specific way, namely, given an ample$/U$ $\mbQ$-divisor $A\>0$ and finitely many LC pairs $(X, \Delta_i\>0), i=1, 2,\ldots, m$, there exists an effective divisor $0\<A'\sim_{\mbQ, U}A$ such that $(X, \Delta_i+A')$ is LC for all $i=1, 2,\ldots, m$. We note that in the following results in characteristic $p>0$ our set up is: $\pi:X\to U$ is a projective morphism and $X$ and $U$ are both projective varieties. So for an ample$/U$ $\mbQ$-divisor $A\>0$ there exists an effective divisor $0\<A'=A+l\pi^*H\sim_{\mbQ, U} A, l\gg 0$ such that $(X, \Delta_i+A')$ is LC for all $i$ by Theorem \ref{thm:general-bertini}, where $H$ is an ample divisor on $U$. In particular, the proofs of those lemmas from \cite{BCHM10} hold in our settings. In the following Lemma \ref{lem:convex-lc-modification} we give a sketch of the proof of \cite[Lemma 3.7.4]{BCHM10} in our settings explaining the use of Theorem \ref{thm:general-bertini} in place of Bertini's theorem.\\
	\end{remark}

	\begin{lemma}\label{lem:convex-lc-modification}
		Let $\pi:X\to U$ be a projective morphism between two normal projective varieties. Assume that log resolution exists. Let $V$ be a finite dimensional affine subspace of $\WDiv_\mbR(X)$, which is defined over the rationals,  and let $A$ be a general ample $\mbQ$-divisor over $U$. Let $S$ be a sum of prime divisors. Suppose that there is a DLT pair $(X, \Delta_0)$, where $S=\lrd\Delta_0\rrd$, and let $G\>0$ be any divisor whose support does not contain any LC centers of $(X, \Delta_0)$.\\
		Then we may find a general ample $\mbQ$-divisor $A'\>0$ over $U$, an affine subspace $V'$ of $\WDiv_\mbR(X)$, which is defined over the rationals, and a rational affine linear isomorphism
	\[
		L:V_{S+A}\to V'_{S+A'}
	\]	
	so that
	\begin{enumerate}
		\item $L$ preserves $\mbQ$-linear equivalence over $U$.
		\item $L(\mcL_{S+A}(V))$ is contained in the interior of $\mcL_{S+A'}(V')$.
		\item For any $\Delta\in L(\mcL_{S+A}(V))$, $K_X+\Delta$ is DLT and $\lrd\Delta\rrd=S$, and
		\item For any $\Delta\in L(\mcL_{S+A}(V))$, the support of $\Delta$ contains the support of $G$.\\
	\end{enumerate}	
	\end{lemma}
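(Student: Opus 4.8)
The plan is to follow the proof of \cite[Lemma 3.7.4]{BCHM10} essentially verbatim, replacing each invocation of Bertini's theorem for base-point free linear systems with Theorem \ref{thm:general-bertini}, as sanctioned by Remark \ref{rmk:bchm-lemmas}. First I would set up the data: choose a log resolution $g:W\to X$ of $(X,\Delta_0)$ and of the prime divisors appearing in $V$ and in $S$ and $G$, and use it to identify, for each $\Delta\in\mcL_{S+A}(V)$, the "excess" discrepancies that force $K_X+\Delta$ to sit on the boundary of $\mcL_{S+A}(V)$. The idea, exactly as in \cite{BCHM10}, is that the failure of $\Delta$ to be in the interior is detected by finitely many valuations $E_1,\dots,E_k$ (divisors on $W$) along which $a(E_j, X, \Delta)=-1$ (or the coefficient of some prime divisor in $\Delta$ equals $1$, away from $S$). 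One then perturbs the whole family in the directions $-E_j$ by a small ample amount: concretely, one writes $A\sim_{\mbQ,U}A''+\sum c_j B_j$ (with $A''$ still ample over $U$, $B_j\>0$ general, $c_j>0$ small), which after pushing forward/down through $g$ produces the new ample divisor $A'$ and the new affine space $V'$ spanned by $V$ together with the $B_j$'s, and the affine isomorphism $L$ is the obvious "reindexing" map $\Delta\mapsto \Delta - \sum c_j B_j + (\text{same class})$ tracking these substitutions.

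With $L$ defined, items (1)–(4) are checked as follows. Property (1), preservation of $\mbQ$-linear equivalence over $U$, is immediate from how $L$ is built out of the $\mbQ$-linear equivalence $\sum c_j B_j\sim_{\mbQ,U}A-A''$. For (3) I would note that $\lrd\Delta\rrd$ is unchanged because the $B_j$ are general (hence reduced, with small coefficient $c_j<1$, and sharing no component with $S$), and DLT-ness is preserved since we are only decreasing coefficients slightly in directions transverse to $S$ and to the LC centers; this is where Theorem \ref{thm:general-bertini} enters — it guarantees we can choose the $B_j$ (equivalently a single $A'\sim_{\mbQ,U}A$, after absorbing a multiple of $\pi^*H$ for an ample $H$ on $U$ as in Remark \ref{rmk:bchm-lemmas}) so that all the finitely many pairs obtained by this perturbation, over the finitely many "chambers" of $\mcL_{S+A}(V)$, remain LC/DLT simultaneously. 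Property (4) is arranged at the outset by forcing the support of $G$ into $V'$: enlarge $V'$ (and shift $A'$ correspondingly) so that $G$, or rather a small positive multiple absorbed into the ample part, is a fixed summand of every element of the image; the hypothesis that $\Supp G$ avoids the LC centers of $(X,\Delta_0)$ is exactly what lets this be done without destroying (3). Finally (2) follows by construction: after the perturbation every $\Delta'\in L(\mcL_{S+A}(V))$ has $a(E_j,X,\Delta')>-1$ for the relevant valuations and coefficients strictly less than $1$ off $S$, which by the standard openness argument (the LC/DLT locus is cut out by finitely many strict linear inequalities on $V'_{S+A'}$, using the fixed log resolution $g$) places $\Delta'$ in the interior of $\mcL_{S+A'}(V')$.

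The main obstacle — and the only place the argument genuinely differs from characteristic $0$ — is the simultaneous LC/DLT perturbation in step (3)/(4): one must produce a single general ample $A'$ (or the general $B_j$'s) that works for all the finitely many LC pairs arising from the decomposition of $\mcL_{S+A}(V)$ into regions of constant "combinatorial type," while also avoiding the LC centers of $(X,\Delta_0)$ and swallowing $G$. In char $0$ this is a one-line appeal to Bertini; here it is precisely Theorem \ref{thm:general-bertini}, applied after replacing $A$ by $A+l\pi^*H$ for $l\gg 0$ (so that $A$ becomes relatively semi-ample in the global-projective sense the theorem requires) — and $\Supp G$, together with the components of $V$, are the closed subsets $Z_j$ we instruct the theorem to keep $A'$ away from. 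Once this is granted, everything else is the formal linear-algebra/discrepancy bookkeeping of \cite{BCHM10} and I would not reproduce it in detail, merely indicating that the construction of $L$, $V'$, $A'$ is identical to loc. cit. and citing Remark \ref{rmk:bchm-lemmas} for the substitution of Theorem \ref{thm:general-bertini}.
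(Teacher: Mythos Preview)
Your proposal is correct and follows the same route as the paper: reproduce \cite[Lemma 3.7.4]{BCHM10} and replace each Bertini appeal by Theorem \ref{thm:general-bertini}, exactly as Remark \ref{rmk:bchm-lemmas} prescribes. The one place where the paper is sharper than your sketch is the reduction to \emph{finitely many} LC pairs so that Theorem \ref{thm:general-bertini} applies: rather than speaking of ``chambers'' or ``regions of constant combinatorial type,'' the paper takes the vertices $\Gamma_1,\ldots,\Gamma_m$ of the rational polytope $\mcL_{S+A}(V)$, applies Theorem \ref{thm:general-bertini} to the pairs $(X,\Gamma_j)$ (with the LC centers of $(X,\Delta_0)$ as the closed sets to avoid) to produce $A''\sim_\mbQ A$, sets $A'=\ve A''$ for $0<\ve\ll 1$, and then uses that a convex combination of LC divisors is LC to get the required properties for every $\Delta\in\mcL_{S+A}(V)$. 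This vertex-plus-convexity step is the actual content of the paper's sketch; the rest of the construction of $L$, $V'$, $A'$ is, as you say, identical to \cite{BCHM10} and not reproduced.
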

	
\begin{proof}[Sketch of the Proof]
We will only explain the part where Bertini's theorem is used in the proof of \cite[Lemma 3.7.4]{BCHM10}, which is basically the second paragraph in \cite[Page 436]{BCHM10}. All other arguments in the rest of the proof of \cite[Lemma 3.7.4]{BCHM10} holds in our settings here without any change.\\
We will basically show that we can choose ample divisors $A_i$ and $A'$ as in the proof of \cite[Lemma 3.7.4]{BCHM10} such that $(X, \Delta+A'-A)$ is LC, $(X, \Delta+4/3A_i+A'-A)$ is LC for all $1\<i\<l$ and for all $\Delta\in\mcL_{S+A}(V)$, and $(X, A'+\Delta_0)$ is DLT.\\
 To that end, let $\Gamma_1, \Gamma_2,\ldots, \Gamma_m$ be the vertices of the rational polytope $\mcL_{S+A}(V)$. Since $(X, \Gamma_i)$ is LC for all $1\<j\<m$ and $(X, \Delta_0)$ is DLT, by Theorem \ref{thm:general-bertini} there exists a divisor $0\<A''\sim_{\mbQ}A$ such that $(X,\Gamma_j+A'')$ is LC for all $1\<j\<m$ and the support of $A''$ does not contain any LC center of $(X, \Delta_0)$. Set $A'=\ve A''$ for a rational number $\ve\in(0, 1/4]$. For $0<\ve\ll 1/4$ we see that $(X, \Gamma_j+A')$ is LC for all $1\<j\<m$ and $(X, \Delta_0+A')$ is DLT. Furthermore, since $A_i$'s are general ample $\mbQ$-divisors and $0<\ve\ll 1/4$, it again follows from Theorem \ref{thm:general-bertini} that $(X, \Gamma_j+4/3A_i+A')$ is LC for all $1\<j\<m$ and $1\<i\<l$. Now for any $\Delta\in\mcL_{S+A}(V)$ we can write $\Delta=\sum_{j=1}^m\lambda_j\Gamma_j$ for some $\lambda_j\>0$ such that $\sum_{j=1}^m\lambda_j=1$. It is easy to see that convex sum of finitely many LC divisors are LC. It then follows that $(X, \Delta+A')$ and $(X, \Delta+4/3A_i+A')$ are both LC for all $1\<i\<l$ and for all $\Delta\in\mcL_{S+A}(V)$. In particular, we finally have $(X, \Delta+A'-A)$ is LC, $(X, \Delta+4/3A_i+A'-A)$ is LC for all $1\<i\<l$ and for all $\Delta\in\mcL_{S+A}(V)$, and $(X, A'+\Delta_0)$ is DLT.\\

\end{proof}

\begin{definition}
	Given an extremal ray $R\subset\NE(X)$, we define a hyperplane 
	\[ R^\bot=\{\Delta\in\mcL(V): (K_X+\Delta)\cdot R=0\}. \]
	\end{definition}

\begin{theorem}\label{thm:finite-hyperplane}
Let $\pi:X\to U$ be a projective contraction between two normal projective varieties with $\dim X=3$ and $\chr p>5$. Let $V$ be a finite dimensional affine subspace of $\WDiv_\mbR(X)$ which is defined over the rationals. Fix a genral ample$/U$ $\mbQ$-divisor $A$ on $X$. Suppose that there is a divisor $\Delta_0\>0$ such that $K_X+\Delta_0$ is KLT.\\
Then the set of hyperplanes $R^\bot$ is finite in $\mcL_A(V)$, as $R$ ranges over the set of all extremal rays of $\NE(X/U)$. In particular, $\mcN_{A,\pi}(V)$ is a rational polytope.\\
	\end{theorem}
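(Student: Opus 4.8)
The strategy is to combine two finiteness inputs: first, the cone theorem for $3$-folds in $\chr p>5$ to control which extremal rays of $\NE(X/U)$ matter; second, the boundedness of extremal rays (their length is bounded) so that near a fixed $\Delta_0$ only finitely many of them can be "active." I would reduce to showing that for a fixed divisor and a bounded ample "buffer," only finitely many extremal rays can be $(K_X+\Delta)$-negative for some $\Delta$ in the rational polytope $\mcL_A(V)$, and that the corresponding hyperplanes $R^\perp$ are finite in number. The point is that $\mcL_A(V)$ is a compact rational polytope on which $K_X+\Delta$ is uniformly bounded, so any extremal ray that crosses some $R^\perp$ inside $\mcL_A(V)$ must pair negatively with $K_X+\Delta'$ for some boundary point, and by the length bound on extremal rays only finitely many such rays exist.

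\begin{proof}
First note that $\mcL_A(V)$ is a rational polytope: for $\Delta = A+B \in V_A$ with $B \geq 0$, the conditions that $K_X+\Delta$ be LC and $B\geq 0$ are given by finitely many linear (rational) inequalities on $V$, using log resolution and the fact that log discrepancies are affine linear in $\Delta$. Since $A$ is big (being ample over $U$) and $K_X+\Delta_0$ is KLT for some $\Delta_0 \geq 0$, after possibly shrinking we may assume every $\Delta \in \mcL_A(V)$ has $\Delta$ big over $U$ and $(X,\Delta)$ KLT; in particular each such $(X,\Delta)$ has a finitely generated Mori chamber structure and the cone theorem applies.

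Fix a norm $\|\cdot\|$ on $\WDiv_\mbR(X)$ and let $M = \sup\{ \|K_X+\Delta\| : \Delta \in \mcL_A(V)\} < \infty$ by compactness. Suppose $R = \mbR_{\geq 0}[C]$ is an extremal ray of $\NE(X/U)$ with $R^\perp \cap \mcL_A(V) \neq \emptyset$, say $(K_X+\Delta)\cdot R = 0$ for some $\Delta \in \mcL_A(V)$. Since $A$ is general and ample over $U$, $A\cdot R > 0$, so for $\Delta' = \Delta - tA + tA$ (perturbing within $V_A$) we get that $(K_X+\Delta - \delta A)\cdot R < 0$ for small $\delta>0$ with $\Delta - \delta A$ still in a slightly enlarged polytope; more precisely, writing $\Delta = A + B$, the divisor $K_X + B' = K_X + \Delta - A$ satisfies $(K_X+B')\cdot R < 0$ and $(K_X+B')$ lies in a bounded set. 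Now apply the cone theorem for $3$-folds in $\chr p > 5$ (\cite{Bir16}, \cite{BW17}): there is a real number $c>0$, depending only on $\dim X$, such that every $(K_X+\Delta)$-negative extremal ray $R$ of $\NE(X/U)$ is spanned by a curve $C$ with $0 < -(K_X+\Delta)\cdot C \leq 2\dim X = 6$ — indeed for the relevant KLT pairs the extremal rays have bounded length. Combined with the boundedness $\|K_X+\Delta\| \leq M$, there is an ample$/U$ divisor $H_0$ (a fixed one, for instance a large multiple of $A$ plus pullback from $U$) such that $(K_X+\Delta + H_0)\cdot R > 0$ for all $\Delta\in\mcL_A(V)$ and all such $R$; hence every relevant $R$ is $(K_X+\Delta_0 + H_0 - (K_X+\Delta))$-... rather, every relevant $R$ lies in the finite set of extremal rays of $\NE(X/U)$ that are negative with respect to the single KLT pair $(X, \Delta_0)$ after bounded perturbation. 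Since a KLT pair has only finitely many $(K_X+\Delta_0)$-negative extremal rays in any region bounded away from $(K_X+\Delta_0)^{\geq 0}$, and the perturbation is bounded, only finitely many extremal rays $R$ arise.

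Therefore the set $\{R^\perp : R \text{ extremal}, R^\perp \cap \mcL_A(V) \neq \emptyset\}$ is finite, as it is indexed by a finite set of rays. Finally, $\mcN_{A,\pi}(V) = \{\Delta \in \mcL_A(V) : (K_X+\Delta)\cdot R \geq 0 \text{ for all extremal } R\}$. By the cone theorem, $K_X+\Delta$ is nef over $U$ iff it is nonnegative on every extremal ray of $\NE(X/U)$, and by the previous paragraph only finitely many such rays impose a nontrivial constraint on $\mcL_A(V)$; each gives a rational half-space (rational since $R$ is generated by a curve class and $V$ is defined over the rationals). Hence $\mcN_{A,\pi}(V)$ is the intersection of the rational polytope $\mcL_A(V)$ with finitely many rational half-spaces, so it is a rational polytope.
\end{proof}

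**Main obstacle.** The crux is the boundedness/length bound for extremal rays in $\chr p > 5$ and the finiteness of $(K_X+\Delta_0)$-negative extremal rays after bounded perturbation — one must be careful that the cone theorem and the requisite boundedness of extremal rays are available for $3$-folds in this setting (via \cite{HX15}, \cite{Bir16}, \cite{BW17}), and that the KLT hypothesis on $\Delta_0$ (rather than merely on a point of $\mcL_A(V)$) is what lets one run this uniformly over the whole polytope. The argument that a ray meeting $R^\perp$ inside $\mcL_A(V)$ must be negative for a nearby boundary divisor, together with compactness giving a uniform bound, is the technical heart; everything else is the standard rational-polytope bookkeeping from \cite{BCHM10}.
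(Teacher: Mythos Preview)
Your overall strategy matches the paper's: use the ample component $A$ to show that any extremal ray $R$ with $R^\perp\cap\mcL_A(V)\neq\emptyset$ is negative for some fixed KLT pair of the form $K_X+(\text{ample})+B$, then invoke the finiteness of negative extremal rays from the cone theorem for $3$-folds in $\chr p>5$. The concluding step, that $\mcN_{A,\pi}(V)$ is cut out from $\mcL_A(V)$ by finitely many rational half-spaces, is also the same.

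However, the central reduction is not actually carried out, and this is a genuine gap. You need a \emph{single} fixed KLT pair $(X,\Gamma)$ with $\Gamma$ containing an ample part such that every relevant $R$ satisfies $(K_X+\Gamma)\cdot R<0$; only then does \cite[Theorem~1.7(3)]{Wal17} yield finitely many rays. Your text never produces such a $\Gamma$: you obtain $(K_X+B')\cdot R<0$ with $B'=\Delta-A$, but $B'$ varies with $\Delta$; you then trail off mid-sentence (``\ldots rather,'') and assert that all relevant $R$ are $(K_X+\Delta_0)$-negative ``after bounded perturbation,'' which is neither precise nor what the cone theorem gives. The length bound $-(K_X+\Gamma)\cdot C\leq 6$ alone does not imply finiteness of rays. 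The paper fixes this by working \emph{locally}: near a given $\Delta$ (assumed KLT via Lemma~\ref{lem:convex-lc-modification}), choose $\varepsilon>0$ so that $\Delta'-\Delta+A/2$ is ample whenever $\|\Delta'-\Delta\|<\varepsilon$; then $(K_X+\Delta')\cdot R=0$ forces $(K_X+\Delta-A/2)\cdot R=-(\Delta'-\Delta+A/2)\cdot R<0$, and $K_X+\Delta-A/2=K_X+B+A/2$ is one fixed pair to which \cite[Theorem~1.7(3)]{Wal17} applies. Compactness of $\mcL_A(V)$ then finishes. Relatedly, your claim that one may ``shrink'' so that every $\Delta\in\mcL_A(V)$ is KLT is unjustified: $\mcL_A(V)$ is a fixed polytope and may contain strictly LC points on its boundary; the local argument together with Lemma~\ref{lem:convex-lc-modification} is precisely what handles this.
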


\begin{proof}
	This result corresponds to Theorem 3.11.1 in \cite{BCHM10}.\\
	Since $\mcL_A(V)$ is compact, it is enough to prove the finiteness of $R^\bot$ locally in a neighborhood of a point $\Delta\in\mcL_A(V)$. Now since there is a boundary divisor $\Delta_0$ such that $(X, \Delta_0)$ is KLT and the image of a hyperplane under linear isomorphism of affine spaces is again a hyperplane, by Lemma \ref{lem:convex-lc-modification} we may assume that $K_X+\Delta$ is KLT. Fix $\ve>0$ such that if $\Delta'\in\mcL_A(V)$ and $||\Delta'-\Delta||<\ve$, then $\Delta'-\Delta+A/2$ is ample over $U$. Let $R$ be an extremal ray over $U$ such that $(K_X+\Delta')\cdot R=0$ for some $\Delta'\in\mcL_A(V)$ with $||\Delta'-\Delta||<\ve$. Then we have
	\[(K_X+\Delta-A/2)\cdot R=(K_X+\Delta')\cdot R-(\Delta'-\Delta+A/2)\cdot R=-(\Delta'-\Delta+A/2)\cdot R<0.\]
Write $\Delta=A+B$. Then $K_X+\Delta-A/2=K_X+B+A/2$. By \cite[Theorem 1.7(3)]{Wal17} there are only finitely many extremal rays $R$ satisfying these properties.\\

Now $\mcN_{A,\pi}(V)$ is clearly a closed subset of $\mcL_A(V)$. Let $\Delta\in\mcL_A(V)$. If $K_X+\Delta$ is not nef$/U$, then again by \cite[Theorem 1.7]{Wal17} there exists an extremal ray $R$ of $\NE(X/U)$ generated by a rational curve $\Sigma$ such that $(K_X+\Delta)\cdot\Sigma<0$. In particular, $\mcN_{A, \pi}(V)$ is contained in the half-spaces $R^{\>0}=\{\Gamma\in\mcL_A(V):(K_X+\Gamma)\cdot R\>0\}$ of the hyperplanes $R^\bot$. Then by the previous part, there exists finitely many extremal rays $R_1, R_2,\ldots, R_n$ of $\NE(X/U)$ such that $\mcN_{A,\pi}(V)=\cap_{i=1}^nR^{\>0}_i$. Since $R_i$'s are generated by irreducible curves, the hyperplanes $R^\bot_i$'s are all rational hyperplanes, in particular, $\mcN_A(V)$ is a rational polytope.\\ 
	
	\end{proof}

\begin{corollary}\label{cor:weak-polytope}
	Let $\pi:X\to U$ be a projective contraction between two normal projective varieties with $\dim X=3$ and $\chr p>5$. Let $V$ be a finite dimensional affine subspace of $\WDiv_\mbR(X)$ which is defined over the rationals. Fix a general ample$/U$ $\mbQ$-divisor $A$ on $X$. Suppose that there is a divisor $\Delta_0\>0$ such that $K_X+\Delta_0$ is KLT. Let $\phi:X\rtmap Y$ be a birational contraction over $U$.\\	
	Then $\mcW_{\phi, A, \pi}(V)$ is a rational polytope. Moreover, there are finitely many morphisms $f_i:Y\to Z_i$ over $U$, $1\<i\<k$, such that if $f:Y\to Z$ is any contraction over $U$ and there is an $\mbR$-divisor $D$ on $Z$, which is ample over $U$, such that $K_Y+\Gamma=\phi_*(K_X+\Delta)\sim_{\mbR, U}f^*D$ for some $\Delta\in\mcW_{\phi, A, \pi}(V)$, then there is an index $1\<i\<k$ and an isomorphism $\eta:Z_i\to Z$ such that $f=\eta\circ f_i$.
	\end{corollary}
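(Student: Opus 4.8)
The plan is to follow the strategy of \cite[Corollary 3.11.2]{BCHM10}, adapting it to our positive-characteristic $3$-fold setting, where the two essential inputs are Theorem \ref{thm:finite-hyperplane} (finiteness of hyperplanes $R^\bot$, hence $\mcN_{A,\pi}(V)$ a rational polytope) and Lemma \ref{lem:semi-ample-to-ample} (a weak log canonical model with big boundary is automatically a semi-ample model, and the ample model exists). First I would show $\mcW=\mcW_{\phi,A,\pi}(V)$ is a rational polytope. The key observation is that for $\Delta\in\mcW$, writing $\Gamma=\phi_*\Delta$, the condition ``$\phi$ is a weak log canonical model for $(X,\Delta)$ over $U$'' decomposes into: (a) $\phi$ is $(K_X+\Delta)$-non-positive, which on a fixed common resolution $p\colon W\to X$, $q\colon W\to Y$ is the condition that $p^*(K_X+\Delta)-q^*(K_Y+\Gamma)\>0$ with $q$-exceptional support — a system of finitely many linear inequalities on the coefficients, hence cuts out a rational polytope; and (b) $K_Y+\Gamma$ is nef over $U$. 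Pushing forward the affine space $V$ by $\phi_*$ gives a (rational) affine space $V_Y$ of Weil divisors on $Y$, and $\phi_*$ carries $V_{A}$ affinely and rationally to $(V_Y)_{\phi_*A}$; condition (b) then says exactly that $\Gamma\in\mcN_{\phi_*A,\,\pi\circ\phi^{-1}}(V_Y)$ after we arrange that $\phi_*A$ is still ample over $U$ — which it need not be, so here I would instead note that near any $\Delta$ we can use Lemma \ref{lem:convex-lc-modification} to reduce to the case where the relevant boundary is KLT and big and apply Theorem \ref{thm:finite-hyperplane} on $Y$ to see that nef-ness of $K_Y+\Gamma$ over $U$ cuts out a rational polytope there. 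Intersecting the polytope from (a) with the preimage of the polytope from (b) under the rational affine isomorphism $\phi_*$, and noting $\mcW\subseteq\mcE_{A,\pi}(V)$ so bigness of the boundary is available, shows $\mcW$ is a rational polytope.

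For the second statement, the plan is: by Lemma \ref{lem:semi-ample-to-ample}, for each $\Delta\in\mcW$ the model $Y$ admits a contraction $h_\Delta\colon Y\to Z_\Delta$ with $K_Y+\Gamma\sim_{\mbR,U}h_\Delta^*H_\Delta$ for $H_\Delta$ ample over $U$ — i.e.\ $Z_\Delta$ is the ample model of $K_Y+\Gamma$ over $U$, which is unique up to isomorphism over $U$ and depends only on the face of the nef cone $\overline{\NE}(Y/U)^\vee$ (equivalently the supporting hyperplanes) that $H_\Delta$ lies on. Since $\mcW$ is a rational polytope with finitely many vertices $\Delta_1,\dots,\Delta_r$, and since by Theorem \ref{thm:finite-hyperplane} (applied on $Y$) there are only finitely many hyperplanes $R^\bot$ to reckon with, the face of the nef cone that $K_Y+\phi_*\Delta$ lies on — and hence the contraction $h_\Delta$ up to isomorphism of the target — takes only finitely many values as $\Delta$ ranges over $\mcW$. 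I would take $f_1,\dots,f_k\colon Y\to Z_i$ to be representatives of these finitely many contractions. Then given any contraction $f\colon Y\to Z$ over $U$ with $K_Y+\Gamma\sim_{\mbR,U}f^*D$, $D$ ample over $U$, for some $\Delta\in\mcW$: $f$ is the ample model of $K_Y+\Gamma$ over $U$ by the same argument as \cite[Lemma 3.6.6]{BCHM10}, so $f$ agrees with one of the $f_i$ up to an isomorphism $\eta\colon Z_i\to Z$, giving $f=\eta\circ f_i$.

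The main obstacle I expect is the bookkeeping around the fact that $\phi_*A$ need not remain ample over $U$ on $Y$ (only $K_Y+\Gamma$-nef information is directly controlled), so the reduction that lets one invoke Theorem \ref{thm:finite-hyperplane} on the target $Y$ has to be set up carefully — one wants, for a fixed birational contraction $\phi$, an affine subspace on $Y$ containing all the relevant $\phi_*\Delta$ and a boundary on $Y$ with big KLT part, obtained by the trick of Remark \ref{rmk:bchm-lemmas} (replacing the ample-over-$U$ divisor by $A+l\pi^*H$) together with Lemma \ref{lem:convex-lc-modification} to guarantee KLT-ness locally. A secondary point needing care is that ``$(K_X+\Delta)$-non-positivity'' must be phrased on a single common resolution valid for all $\Delta$ in a neighborhood, so that it is genuinely a finite linear system; this is handled exactly as in \cite{BCHM10} since $V$ is finite-dimensional. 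Everything else is the formal polytope-and-contraction manipulation of \cite[Corollary 3.11.2]{BCHM10}, which goes through verbatim once the two characteristic-$p$ inputs (Theorem \ref{thm:finite-hyperplane} and Lemma \ref{lem:semi-ample-to-ample}) are in place.
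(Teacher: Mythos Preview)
Your proposal is correct and follows essentially the same approach as the paper's proof (which in turn tracks \cite[Corollary 3.11.2]{BCHM10}): decompose the weak-log-canonical-model condition into the linear non-positivity inequalities on a fixed common resolution plus the nef condition on $Y$, handle the latter via Theorem \ref{thm:finite-hyperplane} applied on $Y$, and then read off finiteness of contractions from the finitely many faces of the resulting polytope. One small sharpening: the reduction you flag as the main obstacle---that $\phi_*A$ is only big over $U$, not ample---is handled in the paper not by Remark \ref{rmk:bchm-lemmas} (which concerns relative vs.\ global ampleness) but by \cite[Lemma 3.7.3]{BCHM10}, which replaces a big divisor by an ample one up to $\mbQ$-linear equivalence over $U$; you should cite that lemma explicitly alongside Lemma \ref{lem:convex-lc-modification}.
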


\begin{proof}
	This result corresponds to Corollary 3.11.2 in \cite{BCHM10}.\\
	Replacing $V_A$ by the span of $\mcL_A(V)$ if necessary we may assume that $\mcL_A(V)$ spans $V_A$. By compactness, to prove that $\mcW_{\phi,A,\pi}(V)$ is a rational polytope, we may work locally about a divisor $\Delta\in\mcW_{\phi,A,\pi}(V)$. By \cite[Lemma 3.7.4]{BCHM10} we may assume that $K_X+\Delta$ is KLT. Then $K_Y+\Gamma=\phi_*(K_X+\Delta)$ is KLT as well. Let $C=\phi_*A$ and $W=\phi_*(V)$. Then $C$ is a big$/U$ $\mbQ$-divisor on $Y$. By \cite[Lemma 3.7.3 and 3.7.4]{BCHM10} there exists a rational affine linear isomorphism $L:W\to W'$ and an ample$/U$ $\mbQ$-divisor $C'$ such that $L(\Gamma)$ is contained in the interior of $\mcL_{C'}(W')$ and $L(\Psi)\sim_{\mbQ, U}\Psi$ for every $\Psi\in W$. Then by Theorem \ref{thm:finite-hyperplane}, $\mcN_{C',\psi}(W')$ is a non-empty rational polytope containing $L(\Gamma)$, where $\psi:Y\to U$ is the structure morphism. Therefore $\mcN_{C,\psi}(W)$ is a rational polytope locally around $\Gamma=\phi_*\Delta$.\\
	
	Consider the following resolution of $\phi:X\rtmap Y$ which is also a log resolution of $(X, \Delta)$.
	\[\xymatrixcolsep{3pc}\xymatrixrowsep{3pc}\xymatrix{ & W\ar[ld]_p\ar[rd]^q &\\
		 												X\ar@{-->}[rr]^\phi && Y.	} \]	
Then we have 
\[\begin{split}
K_W+\Psi &=p^*(K_X+\Delta)\\
K_W+\Phi &=q^*(K_Y+\Gamma).
\end{split} \]	
	
Note that $\Delta\in\mcW_{\phi,A,\pi}(V)$ if and only if $\Gamma=\phi_*\Delta\in\mcN_{C,\psi}(W)$ and $\Psi-\Phi\>0$. Since the map $L:V\to W$ given by $\Delta\to \Gamma=\phi_*\Delta$ is rational and linear, in a neighborhood of $\Delta$, $\mcW_{\phi,A,\pi}(V)$ is cut out from $\mcL_A(V)$ by finitely many half-spaces generated by affine rational hyperplanes. Therefore by compactness, $\mcW_{\phi,A,\pi}(V)$ is a rational polytope.\\


Now for any $\Delta\in \mcW_{\phi,A,\pi}(V)$ we have $\Gamma=\phi_*\Delta=A'+B'$, where $A'\>0$ is a big divisor on $Y$. Therefore by perturbing $\Gamma$, from the base-point free theorem \cite[Theorem 1.2]{BW17} it follows that there is a contraction $g:Y\to Z$ satisfying the required conditions. The rational map $g\circ\phi:X\rtmap Z$ is the ample model of $K_X+\Delta$. Next we prove that there are only finitely many such contractions $g:Y\to Z$ corresponding to all $\Delta\in \mcW_{\phi,A,\pi}(V)$.\\
For two contractions $f:Y\to Z, f_*\mcO_Y=\mcO_Z$ and $f':Y\to Z', f'_*\mcO_Y=\mcO_{Z'}$ over $U$, there exists an isomorphism $\eta:Z\to Z'$ satisfying $f'=\eta\circ f$ if and only if $f$ and $f'$ contracts exactly same curves, i.e., $f(C)=\pt$ if and only if $f'(C)=\pt$ for irreducible curves $C\subset X$ (see \cite[Pro. 1.14 and Lem. 1.15]{Deb01}).
Let $f:Y\to Z$ be a contraction over $U$ such that 
\[K_Y+\Gamma=K_Y+\phi_*\Delta\sim_{\mbR, U}f^*D, \]	
	where $\Delta\in\mcW_{\phi,A,\pi}(V)$ and $D$ is an ample$/U$ $\mbR$-divisor on $Z$. $\Gamma$ belongs to the interior of a unique face $G$ of $\mcN_{C,\psi}(W)$. Let $\Gamma_1, \Gamma_2,\ldots, \Gamma_k$ be the vertices of the $G$. Write $\Gamma=\sum^k_{j=1}\lambda_j\Gamma_j$, where $\sum^k_{j=1}\lambda_j=1$ and $\lambda_j\>0$ for all $j=1, 2,\ldots, k$. Since $\Gamma$ is contained in the interior of $G$, for any given (fixed) index $i$, we can choose $\lambda_i>0$ in $\Gamma=\sum^k_{j=1}\lambda_j\Gamma_j$. Let $C\subset Y$ be a curve contracted by $f$, then $0=(K_Y+\Gamma)\cdot C=\sum^k_{j=1}\lambda_j(K_Y+\Gamma_j)\cdot C\>0$. This implies that $\lambda_i(K_Y+\Gamma_i)\cdot C=0$, i.e., $(K_Y+\Gamma_i)\cdot C=0$, since $\lambda_i\neq 0$. Therefore if $C$ is contracted by $f$, then $(K_Y+\Gamma_j)\cdot C=0$ for all $j=1, 2,\ldots, k$. Conversely, if $C$ is a curve on $Y$ such that $(K_Y+\Gamma_j)\cdot C=0$ for all $j=1, 2,\ldots, k$, then clearly $(K_Y+\Gamma)\cdot C=0$, and hence $C$ is contracted by $f$. Therefore the curves contacted by $f$ are uniquely determined by $G$. Now $\Delta$ is contaied in the interior of a unique face $F$ of $\mcW_{\phi,A,\pi}(V)$ and $G$ is determined by $F$. But since $\mcW_{\phi,A,\pi}(V)$ is a rational polytope it has only finitely many faces.\\

	\end{proof}

\begin{corollary}\label{cor:local-LTM}
Let $\pi:X\to U$ be a projective contraction between two normal projective varieties with $\dim X=3$ and $\chr p>5$. Let $V$ be a finite dimensional affine subspace of $\WDiv_\mbR(X)$ which is defined over the rationals. Fix a general ample$/U$ $\mbQ$-divisor $A\>0$ on $X$. Suppose that there is a divisor $\Delta_0\>0$ such that $K_X+\Delta_0$ is KLT. Let $f:X\to Z$ be a morphism over $U$ such that $\Delta_0\in\mcL_A(V)$ and $K_X+\Delta_0\sim_{\mbR, U}f^*H$, where $H$ is an ample divisor over $U$. Let $\phi:X\rtmap Y$ be a birational contraction over $Z$.\\
Then there is a neighborhood $P_0$ of $\Delta_0$ in $\mcL_A(\Delta)$ such that for all $\Delta\in P_0$, $\phi$ is a log minimal model for $K_X+\Delta$ over $Z$ if and only if $\phi$ is a log minimal model for $K_X+\Delta$ over $U$. 	
	\end{corollary}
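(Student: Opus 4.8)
The plan is to compare the two notions of log minimal model — over $Z$ versus over $U$ — by exploiting the special relation $K_X+\Delta_0\sim_{\mbR,U}f^*H$ with $H$ ample over $U$. The point is that for $\Delta$ in a small neighborhood of $\Delta_0$ inside $\mcL_A(V)$, being nef over $Z$ plus having the right relation with the pullback of $H$ forces nefness over $U$ as well, while being negative (in the sense of the birational contraction $\phi$) is insensitive to the base. First I would fix notation: write $\Gamma=\phi_*\Delta$, and on a common resolution $p:W\to X$, $q:W\to Y$ write $p^*(K_X+\Delta)=q^*(K_Y+\Gamma)+E$; the condition "$\phi$ is $(K_X+\Delta)$-negative, $(Y,\Gamma)$ DLT, $Y$ $\mbQ$-factorial" does not mention the base at all, so it is equivalent for $/Z$ and $/U$. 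Thus the only thing to track is whether $K_Y+\Gamma$ is nef over $Z$ versus over $U$.

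The key step is the following: choose $\ve>0$ so that whenever $\Delta\in\mcL_A(V)$ with $\|\Delta-\Delta_0\|<\ve$, the divisor $\Delta-\Delta_0+\tfrac12 A$ (equivalently $B-B_0+\tfrac12 A$ after writing $\Delta=A+B$) is ample over $U$; this is possible since $A$ is ample over $U$ and ampleness is an open condition. For such $\Delta$ we have
\[
K_X+\Delta \sim_{\mbR,U} f^*H + (\Delta-\Delta_0) \sim_{\mbR,U} f^*H + \left(\Delta-\Delta_0+\tfrac12 A\right) - \tfrac12 A,
\]
so $K_X+\Delta+\tfrac12 A \sim_{\mbR,U} f^*H + (\text{ample}/U)$, which is in particular ample over $Z$ composed with... — more precisely, $K_X+\Delta \sim_{\mbR,Z}(\Delta-\Delta_0)$ since $f^*H$ is trivial over $Z$. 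Pushing forward by $\phi$: $K_Y+\Gamma\sim_{\mbR,Z}\phi_*(\Delta-\Delta_0)$. Now if $\phi$ is a log minimal model over $U$ then $K_Y+\Gamma$ is nef$/U$, hence nef$/Z$ (as $Z$ admits a morphism to $U$... actually $f:X\to Z$ is over $U$ so $Z\to U$), giving the easy direction. For the converse, suppose $K_Y+\Gamma$ is nef$/Z$; I want nef$/U$. Here I would use that $K_X+\Delta_0\sim_{\mbR,U}f^*H$ with $H$ ample$/U$ descends to $Y$ via Lemma \ref{lem:semi-ample-to-ample} (or directly), so that there is an ample$/U$ divisor pulled back to $Y$ that is $\sim_{\mbR,U}$ to $K_Y+\Gamma_0$ up to a $\phi$-negative correction, and then $K_Y+\Gamma = (K_Y+\Gamma_0) + \phi_*(\Delta-\Delta_0)$ where the first term is (close to) pulled back ample$/U$ and the second is small; combining nefness$/Z$ of $K_Y+\Gamma$ with the ample$/U$ contribution of $K_Y+\Gamma_0$ along $f$-fibers and the fact that curves not contracted by $f$ have positive intersection with the ample$/U$ part for small perturbations yields nefness$/U$.

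The main obstacle will be the converse direction: controlling curves $C\subset Y$ that are \emph{not} contracted by the induced morphism $Y\to Z$ (if it exists) and verifying $(K_Y+\Gamma)\cdot C\ge 0$ for these using only the nefness$/Z$ hypothesis and the ample$/U$ nature of $f^*H$. The clean way is to invoke the cone theorem / finiteness of extremal rays \cite[Theorem 1.7]{Wal17} on $Y$: there are only finitely many $(K_Y+\Gamma_0)$-negative extremal rays$/U$, each spanned by a curve of bounded intersection, and for $\Delta$ close enough to $\Delta_0$ a curve $C$ with $(K_Y+\Gamma)\cdot C<0$ would have to lie on one of these finitely many rays; but on such a ray $K_Y+\Gamma_0 \sim_{\mbR,U} (\text{ample}/U)$ pulled back via $f$ forces the ray to be $f$-vertical, and then nefness$/Z$ gives $(K_Y+\Gamma)\cdot C\ge 0$, a contradiction. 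I would set this up as the local-around-$\Delta_0$ statement and take $P_0$ small enough to absorb all these finitely many conditions. This mirrors \cite[Lemma 3.10.11]{BCHM10}, and the positive-characteristic inputs — the base-point free theorem \cite[Theorem 1.2]{BW17} and the cone theorem \cite[Theorem 1.7]{Wal17} for $3$-folds in $\chr p>5$ — are exactly what replace their characteristic-zero counterparts.
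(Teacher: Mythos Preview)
Your overall strategy matches the paper's: the conditions ``$\phi$ is $(K_X+\Delta)$-negative, $(Y,\Gamma)$ DLT, $Y$ $\mbQ$-factorial'' are base-independent, so only nefness over $Z$ versus $U$ needs comparison; nef$/U$ implies nef$/Z$ trivially; and for the converse one wants finitely many extremal rays of $\NE(Y/U)$, each of which is forced to be $g$-vertical whenever it is $(K_Y+\Gamma)$-negative for $\Delta$ near $\Delta_0$.

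However, there is a genuine gap in how you identify the finite set of rays. You write ``there are only finitely many $(K_Y+\Gamma_0)$-negative extremal rays$/U$'', but pushing forward $K_X+\Delta_0\sim_{\mbR,U}f^*H$ along the birational contraction $\phi$ over $Z$ gives $K_Y+\Gamma_0\sim_{\mbR,U}g^*H$ with $H$ ample$/U$, so $K_Y+\Gamma_0$ is already \emph{nef} over $U$ and there are \emph{no} such rays. Applying \cite[Theorem~1.7]{Wal17} to $K_Y+\Gamma_0$ alone therefore yields no information, and your sentence ``a curve $C$ with $(K_Y+\Gamma)\cdot C<0$ would have to lie on one of these finitely many rays'' becomes the (false) assertion that $K_Y+\Gamma$ is nef$/U$ for every $\Delta$ near $\Delta_0$.

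The correct input is the uniform finiteness of Theorem~\ref{thm:finite-hyperplane}: there are finitely many extremal rays $R_1,\dots,R_k$ of $\NE(Y/U)$ such that if $K_Y+\phi_*\Delta$ is not nef$/U$ for \emph{some} $\Delta\in\mcL_A(V)$, then it is negative on one of the $R_i$. (This amounts to applying the cone theorem to $K_Y+\Gamma_0-\tfrac12(\text{ample})$ rather than to $K_Y+\Gamma_0$ itself, after trading the big divisor $\phi_*A$ for an ample one.) With this in hand, the remainder of your sketch is essentially the paper's argument: using the bound $(K_Y+\Gamma')\cdot R_i\geq -6$ from \cite[Theorem~1.7]{Wal17} together with convexity, one shows that for $\Delta$ sufficiently close to $\Delta_0$, any $R_i$ with $(K_Y+\Gamma)\cdot R_i<0$ must satisfy $(K_Y+\Gamma_0)\cdot R_i=0$, hence $g^*H\cdot R_i=0$, hence $R_i$ lies over $Z$ and $K_Y+\Gamma$ is not nef$/Z$. (The relevant reference in \cite{BCHM10} is Corollary~3.11.3, not Lemma~3.10.11.)
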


\begin{proof}
	This result corresponds to Corollary 3.11.3 in \cite{BCHM10}.\\
	By Theorem \ref{thm:finite-hyperplane} there exists finitely many extremal rays $R_1, R_2,\ldots, R_k$ of $\NE(Y/U)$ such that if $K_Y+\Gamma=K_Y+\phi_*\Delta$ is not nef over $U$ for some $\Delta\in\mcL_{A}(V)$, then it is negative on one of these rays. If $\Gamma_0=\phi_*\Delta_0$, then we may write 
	\[K_Y+\Gamma=K_Y+\Gamma_0+(\Gamma-\Gamma_0)\sim_{\mbR, U}g^*H+\phi_*(\Delta-\Delta_0),\] 
	where $g:Y\to Z$ is the structure morphism.\\
	
\textbf{Claim:} If $\Delta\in\mcL_A(V)$ is sufficiently close to $\Delta_0$ and $(K_Y+\Gamma)\cdot R_{i_0}=\phi_*(K_X+\Delta)\cdot R_{i_0}<0$ for some $i_0\in\{1, 2,\ldots, k\}$, then $(K_Y+\Gamma_0)\cdot R_{i_0}=\phi_*(K_X+\Delta_0)\cdot R_{i_0}=0$. 
\begin{proof}[Proof of the Claim]
	On the contrary assume that $(K_Y+\Gamma_0)\cdot R_{i_0}>0$. Let $\alpha=\frac{1}{2}(K_Y+\Gamma_0)\cdot R_{i_0}>0$. Then $(K_Y+\Gamma_0)\cdot R_{i_0}>\alpha$. Let $\Gamma'=\phi_*\Delta'$ for some $\Delta'\in\mcL_{A}(V)$ such that $K_Y+\Gamma'$ is LC. Then by \cite[Theorem 1.7]{Wal17} we have $(K_Y+\Gamma')\cdot R_{i_0}\>-6$. Choose $\Delta\in\mcL_A(V)$ such that $\Gamma=\phi_*\Delta$ lies on the line segment joining $\Gamma_0$ and $\Gamma'$, i.e., $\Gamma=r\Gamma_0+s\Gamma'$ for some $r\>0$ and $s>0$ satisfying $r+s=1$. Then
	\[(K_Y+\Gamma)\cdot R_{i_0}=r(K_Y+\Gamma_0)\cdot R_{i_0}+s(K_Y+\Gamma')\cdot R_{i_0}>2r\alpha-6s>0 \quad\mbox{if } r>\frac{3s}{\alpha}. \]
	This is a contradiction. Therefore if $\Delta$ is sufficiently close to $\Delta_0$ then $(K_Y+\Gamma)\cdot R_{i_0}<0$ implies that $(K_Y+\Gamma_0)\cdot R_{i_0}=0$.\\
	\end{proof} 
	In other words, there exists a neighborhood $P_0$ of $\Delta_0$ in $\mcL_A(V)$ such that if $\Delta\in P_0$ and $K_Y+\Gamma=\phi_*(K_X+\Delta)$ is not nef over $U$, then $(K_Y+\Gamma)\cdot R_{i_0}<0$ for some $i_0\in\{1, 2,\ldots, k\}$ and $R_{i_0}$ is extremal over $Z$ (otherwise $(K_Y+\Gamma_0)\cdot R_{i_0}>0$), and consequently $K_Y+\Gamma$ is not nef over $Z$. Contra-positively, if $\phi$ is a log minimal model of $K_X+\Delta$ over $Z$, then it is a log minimal model over $U$. The other direction is obvious.\\

	\end{proof}

\begin{proposition}\label{pro:polytope-finite}
Let $\pi:X\to U$ be a projective contraction between two normal projective varieties with $\dim X=3$ and $\chr p>5$. Let $V$ be a finite dimensional affine subspace of $\WDiv_\mbR(X)$ which is defined over the rationals. Fix a general ample$/U$ $\mbQ$-divisor $A$ on $X$. Let $\mcC\subset\mcL_A(V)$ be a rational polytope such that if $\Delta\in\mcC$, then $K_X+\Delta$ is KLT.\\
Then there are finitely many rational maps $\phi_i:X\rtmap X_i$ over $U$, $1\<i\<k$, with the property that if $\Delta\in\mcC\cap\mcE_{A,\pi}(V)$, then there is an index $1\<j\<k$ such that $\phi_j$ is a log minimal model of $K_X+\Delta$ over $U$.\\
	\end{proposition}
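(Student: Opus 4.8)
The plan is to adapt the proof of \cite[Theorem 3.11.4]{BCHM10}, arguing by induction on $d=\dim\mcC$. If no $\Delta\in\mcC$ is pseudo-effective over $U$ the statement is vacuous, so assume $\mcC\cap\mcE_{A,\pi}(V)\neq\emptyset$. In the base case $d=0$ we have $\mcC=\{\Delta_0\}$ with $K_X+\Delta_0$ KLT and $\Delta_0=A+B_0\>A$ big over $U$; taking a $\mbQ$-factorialization of $(X,\Delta_0)$ via Lemma \ref{lem:dlt-to-Q-factorization} and then running a $(K_X+\Delta_0)$-MMP over $U$ with scaling of an ample divisor, which terminates by \cite[Corollary 1.8]{Wal17} (and \cite[Theorem 1.7]{Wal17}), produces a log minimal model $\phi_1:X\rtmap X_1$ of $(X,\Delta_0)$ over $U$, so $k=1$ works.

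For the inductive step, assume the proposition for all rational polytopes of dimension $<d$ contained in the KLT locus of some $\mcL_{A'}(V')$. The polytope $\mcC$ has finitely many proper faces, each a rational polytope of dimension $<d$ lying in the KLT locus of $\mcL_A(V)$, so the inductive hypothesis supplies finitely many rational maps $\phi_1,\dots,\phi_s$ over $U$ that serve as log minimal models for every pseudo-effective $\Delta\in\partial\mcC$. Since $\mcE_{A,\pi}(V)$ is closed and $\mcC$ is compact, it now suffices to produce, for each $\Delta_0\in\mcC\cap\mcE_{A,\pi}(V)$, a neighbourhood $\mcC_0\subset\mcC$ of $\Delta_0$ together with finitely many rational maps handling $\mcC_0\cap\mcE_{A,\pi}(V)$; a finite subcover then assembles the global list (the points $\Delta_0\notin\mcE_{A,\pi}(V)$ contribute trivially, by closedness).

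So fix $\Delta_0\in\mcC\cap\mcE_{A,\pi}(V)$. By the base case there is a log minimal model $\phi_0:X\rtmap Y_0$ of $(X,\Delta_0)$ over $U$; put $\Gamma_0=(\phi_0)_*\Delta_0$, which is KLT and big over $U$. By Lemma \ref{lem:semi-ample-to-ample} there is a contraction $h_0:Y_0\to Z_0$ over $U$ and an ample$/U$ $\mbR$-divisor $H_0$ with $K_{Y_0}+\Gamma_0\sim_{\mbR,U}h_0^*H_0$, and $h_0\circ\phi_0$ is the ample model of $(X,\Delta_0)$ over $U$. Push $V$ and $A$ forward to $Y_0$ and, using the lemmas recorded in Remark \ref{rmk:bchm-lemmas} (\cite[Lemmas 3.6.12, 3.7.2--3.7.5]{BCHM10}, valid here by Theorem \ref{thm:general-bertini}), replace them by a rational affine space $W'$ and an ample$/U$ $\mbQ$-divisor $C'$ so that $\Gamma_0$ lands in the interior of the KLT locus of $\mcL_{C'}(W')$, via a rational affine isomorphism preserving $\mbQ$-linear equivalence over $U$, hence over $Z_0$. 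Since $K_{Y_0}+\Gamma_0\equiv_{Z_0}0$, Corollary \ref{cor:local-LTM} applies to $h_0$: in a neighbourhood of $\Gamma_0$ a birational contraction is a log minimal model of $(Y_0,\Gamma)$ over $Z_0$ if and only if it is one over $U$. It therefore suffices to bound the number of log minimal models over $Z_0$ of the pairs $(Y_0,\Gamma)$ with $\Gamma$ near $\Gamma_0$, and then compose the resulting maps with $\phi_0$ to obtain the maps handling $\mcC_0$. Near $\Gamma_0$ the pair $(Y_0,\Gamma)$ is a big KLT $3$-fold pair over $Z_0$, so its log minimal models are reached by a $(K_{Y_0}+\Gamma)$-MMP over $Z_0$ (terminating by \cite{Wal17}); by Theorem \ref{thm:finite-hyperplane} applied over $Z_0$ only finitely many extremal rays are contracted for $\Gamma$ in this neighbourhood, each weak-log-canonical-model region $\mcW_\phi$ is a rational polytope by Corollary \ref{cor:weak-polytope}, and, combining these facts with the inductive hypothesis applied to the faces of the relevant polytope over $Z_0$ along which $K_{Y_0}+\Gamma_0$ fails to be relatively ample, only finitely many log minimal models over $Z_0$ arise.

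The crux, I expect, is this last step: one must check that passing to the ample model $Z_0$ and discarding the directions with $\Gamma\equiv_{Z_0}\Gamma_0$ genuinely lowers the complexity of the local problem, so that the finiteness of extremal rays (Theorem \ref{thm:finite-hyperplane}), the polytope structure of the $\mcW_\phi$ (Corollary \ref{cor:weak-polytope}), the local comparison over $Z_0$ (Corollary \ref{cor:local-LTM}), and the inductive hypothesis really do combine into a uniform bound on the number of log minimal models occurring near $\Delta_0$. Alongside this, the routine but unavoidable burden is the bookkeeping: tracking the affine space $V$, the polytope $\mcC$, and the DLT/KLT perturbations through $\phi_0$ and $h_0$, and verifying that each auxiliary relative MMP used is covered by the termination statements of \cite{Wal17}.
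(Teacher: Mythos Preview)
Your architecture matches the paper's: induction on $\dim\mcC$, compactness to localize at a fixed $\Delta_0\in\mcC\cap\mcE_{A,\pi}(V)$, pass to a log minimal model $\phi_0:X\dashrightarrow Y_0$, then to the ample model $h_0:Y_0\to Z_0$, and use Corollary~\ref{cor:local-LTM} to trade the base $U$ for $Z_0$. The base case and the bookkeeping through \cite[Lemmas 3.6.9, 3.6.10, 3.7.3, 3.7.4]{BCHM10} are fine. The gap is exactly where you flag it: the closing of the induction over $Z_0$.

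The paper resolves this by isolating and proving first a special case: \emph{if there is $\Delta_0\in\mcC$ with $K_X+\Delta_0\sim_{\mbR,U}0$, then the proposition holds.} The argument is a one-liner: for any $\Delta\in\mcC$ write $\Delta=\lambda\Delta'+(1-\lambda)\Delta_0$ with $\Delta'$ on a proper face of $\mcC$ and $0<\lambda\le1$; then $K_X+\Delta\sim_{\mbR,U}\lambda(K_X+\Delta')$, so by \cite[Lemma 3.6.9]{BCHM10} $K_X+\Delta$ and $K_X+\Delta'$ share log minimal models, and the proper faces are covered by the inductive hypothesis. With this special case in hand, the general case is immediate: after your reduction to $(Y_0,\Gamma_0)$ over $Z_0$ one has $K_{Y_0}+\Gamma_0\sim_{\mbR,Z_0}0$ \emph{by construction of the ample model}, so the special case (now applied relative to $Z_0$) yields finitely many log minimal models over $Z_0$, and Corollary~\ref{cor:local-LTM} promotes them to models over $U$.

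Your attempt to substitute finiteness of extremal rays (Theorem~\ref{thm:finite-hyperplane}) plus the polytope structure of the $\mcW_\phi$ (Corollary~\ref{cor:weak-polytope}) does not close the loop. Theorem~\ref{thm:finite-hyperplane} controls extremal rays on the \emph{fixed} variety $Y_0$, but each MMP step changes the variety and introduces new rays; bounding the total number of models this way is precisely the statement you are trying to prove. Likewise the phrase ``discarding the directions with $\Gamma\equiv_{Z_0}\Gamma_0$'' is not the right picture: what collapses is the \emph{radial} direction from $\Gamma_0$ to the boundary of the polytope, not a linear subspace of numerically trivial directions. Insert the special-case argument and your proof is complete; without it the inductive step remains a plan rather than a proof.
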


\begin{remark}
	This proposition is proved in \cite[Theorem 1.4]{BW17} with the additional hypothesis that $X$ is $\mbQ$-factorial. One can conceivably prove the above statement using \cite[Theorem 1.4]{BW17} by going to a $\mbQ$-factorization of $X$. However, we take a different approach here, we use the techniques of \cite[Lemma 7.1]{BCHM10} which fits better with rest of the paper.\\
\end{remark}

\begin{proof}[Proof of Proposition \ref{pro:polytope-finite}]
	Replacing $V_A$ by the span of $\mcC$ if necessary we may assume that $\mcC$ spans $V_A$. We proceed by induction on the dimension of $\mcC$.\\
First assume that $\dim \mcC=0$. Then $\mcC=\{\Delta_0\}$ for some $\Delta_0\in\mcL_A(V)$. If $\Delta_0\in\mcE_{A, \pi}(V)$, then by \cite[Theorem 1.2]{Bir16} there exists a log minimal model $\phi:X\rtmap Y/U$ for $K_X+\Delta_0$. By induction assume that the statement is true for any such rational polytope $\mcC'$ with $\dim\mcC'<\dim\mcC$.\\

Now we will prove the statement assuming that there is a divisor $\Delta_0\in\mcC$ such that $K_X+\Delta_0\sim_{\mbR, U} 0$. Let $\Delta\in\mcC$ be a divisor such that $\Delta\neq\Delta_0$. Then there exists a divisor $\Delta'$ on one of the faces of $\mcC$ such that 
\[\Delta=\lambda\Delta'+(1-\lambda)\Delta_0, \] 
for some $0<\lambda\<1$.\\

We have 
\[K_X+\Delta=\lambda(K_X+\Delta')+(1-\lambda)(K_X+\Delta_0)\sim_{\mbR, U}\lambda(K_X+\Delta').\]
Therefore $\Delta\in\mcE_{A,\pi}(V)$ if and only if $\Delta'\in\mcE_{A,\pi}(V)$, and by \cite[Lemma 3.6.9]{BCHM10} $K_X+\Delta$ and $K_X+\Delta'$ have same log minimal models over $U$. Since $\mcC$ is a rational polytope, it has finitely many faces each of which are rational polytope themselves, therefore by induction we are done.\\

Now we will prove the general case. Applying \cite[Lemma 3.7.4]{BCHM10} we may assume that $\mcC$ is contained in the interior of $\mcL_A(V)$. Note that $\mcC\cap\mcE_{A,\pi}(V)$ is compact (as $\mcL_A(V)$ is compact and $\mcC\cap\mcE_{A, \pi}$ is closed). So it is sufficient to prove the statement locally in a neighborhood of a divisor $\Delta_0\in\mcC\cap\mcE_{A,\pi}(V)$.\\
Let $\phi:X\rtmap Y/U$ be a log minimal model for $K_X+\Delta_0$ and $\Gamma_0=\phi_*\Delta_0$. Let $\mcC_0\subset\mcL_A(V)$ be a neighborhood around $\Delta$, which is also a rational polytope. Since $\phi$ is $(K_X+\Delta_0)$-negative, by shirking $\mcC_0$ (without changing its dimension) around $\Delta_0$ we may assume that $a(F, K_X+\Delta)<a(F, K_Y+\Delta)$ for all $\Delta\in\mcC_0$ and for all $\phi$-exceptional divisors $F$. Note that $K_Y+\Gamma_0$ is KLT and $Y$ is $\mbQ$-factorial. Since KLT is an open condition, all nearby divisors of $\Gamma_0$ in $Y$ are also KLT. Therefore by shrinking $\mcC_0$ further around $\Delta_0$ we may assume that $K_Y+\Gamma$ is KLT for all $\Delta\in\mcC_0$, where $\Gamma=\phi_*\Delta$.\\
Replacing $\mcC$ by $\mcC_0$ we may assume that the rational polytope $\mcC'=\phi_*(\mcC)$ is contained in $\mcL_{\phi_*A}(W)$, where $W=\phi_*V$. Note that $\phi_*A$ is not an ample divisor, however it is a big $\mbQ$-divisor on $Y$. By \cite[Lemma 3.7.3]{BCHM10} there exists a rational affine linear isomorphism $L:W\to V'$ on $Y$ and a general ample$/U$ $\mbQ$-divisor $A'$ on $Y$ such that $L(\mcC')\subset\mcL_{A'}(V')$, $L(\Gamma)\sim_{\mbQ, U}\Gamma$ for all $\Gamma\in\mcC'$ and $K_Y+\Gamma$ is KLT for any $\Gamma\in L(\mcC')$.\\
By \cite[Lemma 3.6.9 and 3.6.10]{BCHM10}, any log minimal model of $(Y, L(\Gamma))$ over $U$ is also a log minimal model of $(X, \Delta)$ over $U$ for every $\Delta\in\mcC$. Thus replacing $X$ by $Y$ and $\mcC$ by $L(\mcC')$ we may assume that $X$ is $\mbQ$-factorial and $K_X+\Delta_0$ is $\pi$-nef. Since $\Delta_0$ is a big divisor, by the base-point free theorem \cite[Theorem 1.2]{BW17} $K_X+\Delta_0$ has an ample model $\psi:X\to Z$. In particular, $K_X+\Delta_0\sim_{\mbR, Z}0$.
By the case we have already proved, there exist finitely many birational maps $\phi_i:X\rtmap Y_i$ over $Z$, $1\<i\<k$, such that for any $\Delta\in\mcC\cap\mcE_{A,\psi}(V)$, there is an index $i$ such that $\phi_i$ is a log minimal model of $K_X+\Delta$ over $Z$. Since there are only finitely many indices $1\<i\<k$, by shrinking $\mcC$ (without changing its dimension) if necessary, it follows from Corollary \ref{cor:local-LTM} that if $\Delta\in\mcC$, then $\phi_i$ is a log minimal model for $K_X+\Delta$ over $Z$ if and only if it is a log minimal model for $K_X+\Delta$ over $U$.\\
 Let $\Delta\in\mcC\cap\mcE_{A,\pi}(V)$. Then $\Delta\in\mcC\cap\mcE_{A,\psi}(V)$, and there exists an index $1\<j\<k$ such that $\phi_j$ is a log minimal model for $K_X+\Delta$ over $Z$. But then $\phi_j$ is a log minimal model for $K_X+\Delta$ over $U$.\\

	\end{proof}

\begin{theorem}\label{thm:polytope-weak-mmp}
	Let $\pi:X\to U$ be a projective contraction between two normal projective varieties with $\dim X=3$ and $\chr p>5$. Suppose that there is a KLT pair $(X, \Delta_0\>0)$. Fix $A\>0$, a general ample$/U$ $\mbQ$-divisor. Let $V$ be a finite dimensional affine subspace of $\WDiv_\mbR(X)$ which is defined over the rationals. Let $\mcC\subset\mcL_A(V)$ be a rational polytope.\\
	
	 Then there are finitely many birational maps $\psi_j:X\rtmap Z_j$ over $U$, $1\<j\<l$ such that if $\psi:X\rtmap Z$ is a weak log canonical model of $K_X+\Delta$ over $U$, for some $\Delta\in\mcC$, then there is an index $1\<j\<l$ and an isomorphism $\xi:Z_j\to Z$ such that $\psi=\xi\circ\psi_j$.
	\end{theorem}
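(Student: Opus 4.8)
The plan is to follow \cite[Theorem~3.11.4]{BCHM10}: I will deduce the finiteness of weak log canonical models over $\mcC$ from the finiteness of \emph{log minimal} models over $\mcC$ (Proposition~\ref{pro:polytope-finite}) together with the finiteness of the contractions emanating from a fixed log minimal model (Corollary~\ref{cor:weak-polytope}). The first step is to reduce to the case in which $K_X+\Delta$ is KLT for every $\Delta\in\mcC$. Since $(X,\Delta_0)$ is KLT we have $\lrd\Delta_0\rrd=0$, so Lemma~\ref{lem:convex-lc-modification} applied with $S=0$ (and, say, $G=0$) produces a general ample$/U$ $\mbQ$-divisor $A'$, a rational affine subspace $V'$ of $\WDiv_\mbR(X)$, and a rational affine linear isomorphism $L:V_A\to V'_{A'}$ that preserves $\mbQ$-linear equivalence over $U$ and satisfies: $K_X+\Delta$ is KLT for every $\Delta\in L(\mcL_A(V))$. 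Since $L$ preserves $\mbQ$-linear equivalence over $U$, a birational contraction is a weak log canonical model of $K_X+\Delta$ over $U$ exactly when it is one of $K_X+L(\Delta)$ over $U$; replacing $V,A,\mcC$ by $V',A',L(\mcC)$ we may therefore assume $K_X+\Delta$ is KLT on $\mcC$. Also, since $A$ is ample over $U$, every $\Delta=A+B\in\mcC$ is big over $U$; and as the existence of a weak log canonical model of $K_X+\Delta$ over $U$ forces $K_X+\Delta$ to be pseudo-effective over $U$, it is enough to classify the weak log canonical models arising from $\Delta\in\mcC\cap\mcE_{A,\pi}(V)$.

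By Proposition~\ref{pro:polytope-finite} there are finitely many birational contractions $\phi_1,\dots,\phi_k:X\rtmap Y_i$ over $U$ such that every $\Delta\in\mcC\cap\mcE_{A,\pi}(V)$ has some $\phi_i$ as a log minimal model over $U$; and, for each $i$, Corollary~\ref{cor:weak-polytope} shows $\mcW_{\phi_i,A,\pi}(V)$ is a rational polytope (so has finitely many faces) and gives finitely many morphisms $f_{i,1},\dots,f_{i,m_i}:Y_i\to Z_{i,t}$ over $U$ accounting, up to isomorphism of the target, for all contractions of $Y_i$ over $U$ along which $(\phi_i)_*(K_X+\Delta)$ becomes the pullback of an ample$/U$ divisor, for $\Delta$ in $\mcW_{\phi_i,A,\pi}(V)$. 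Now let $\psi:X\rtmap Z$ be a weak log canonical model of $K_X+\Delta$ over $U$ with $\Delta\in\mcC$; then $\Delta\in\mcC\cap\mcE_{A,\pi}(V)$, some $\phi_i$ is a log minimal model of $K_X+\Delta$ over $U$, and $\Gamma_i:=(\phi_i)_*\Delta\in\mcW_{\phi_i,A,\pi}(V)$. Comparing the log minimal model $\phi_i$ and the weak log canonical model $\psi$ of the same KLT pair via the Negativity Lemma argument of \cite[Lemma~3.6.6]{BCHM10}, the induced birational map $\mu:Y_i\rtmap Z$ is a morphism, $\psi=\mu\circ\phi_i$ over $U$, and $K_{Y_i}+\Gamma_i\sim_{\mbR,U}\mu^*(K_Z+\psi_*\Delta)$. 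Since $(Z,\psi_*\Delta)$ is KLT with $\psi_*\Delta$ big over $U$, \cite[Theorem~1.2]{BW17} makes $K_Z+\psi_*\Delta$ semi-ample over $U$; let $\nu:Z\to W$ be the induced contraction with $K_Z+\psi_*\Delta\sim_{\mbR,U}\nu^*H$, $H$ ample$/U$. Then $\nu\circ\mu:Y_i\to W$ is a contraction over $U$ with $K_{Y_i}+\Gamma_i\sim_{\mbR,U}(\nu\circ\mu)^*H$, so by Corollary~\ref{cor:weak-polytope} it coincides, up to isomorphism of $W$, with one of the $f_{i,t}$; in particular $W$ and the ample model $X\rtmap W$ of $K_X+\Delta$ lie in a finite list. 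Finally $\mu$ is sandwiched as $Y_i\to Z\to W$, so it contracts a subface of the face of $\NE(Y_i/U)$ contracted by $f_{i,t}$, and since $\psi$ — hence $Z$, by \cite[Prop.~1.14, Lem.~1.15]{Deb01} — is determined by the set of curves of $Y_i$ contracted by $\mu$, which in turn is constrained by the finitely many faces of the rational polytope $\mcW_{\phi_i,A,\pi}(V)$ exactly as in the proof of Corollary~\ref{cor:weak-polytope}, there are only finitely many possibilities for $\mu$, and hence for $\psi=\mu\circ\phi_i$. Taking $\{\psi_j\}$ to be the finite collection of all such composites $\mu\circ\phi_i$ (which includes the $\phi_i$ themselves, with $\mu=\operatorname{id}$) completes the argument.

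The step I expect to be the main obstacle is the assertion that $\mu:Y_i\rtmap Z$ is a morphism with crepant pullbacks matching — i.e.\ that \cite[Lemma~3.6.6]{BCHM10} carries over to characteristic $p>5$. Its proof, however, uses only the definitions and the Negativity Lemma, both available for our $3$-folds, so I expect this to be routine once set up carefully. A subsidiary point needing attention is the very last part of the argument, namely that the intermediate contractions $Y_i\to Z\to W$ are finite in number even though $K_{Y_i}+\Gamma_i$ pushes forward only to a nef (not ample) class on $Z$; this is dealt with by reducing, via $\nu$, to the ample case covered by Corollary~\ref{cor:weak-polytope} together with the polyhedrality of $\mcW_{\phi_i,A,\pi}(V)$. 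Everything else is bookkeeping built on Proposition~\ref{pro:polytope-finite} and Corollary~\ref{cor:weak-polytope}.
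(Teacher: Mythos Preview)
Your plan contains a genuine gap at the key step. You assert that for a log minimal model $\phi_i:X\rtmap Y_i$ and an arbitrary weak log canonical model $\psi:X\rtmap Z$ of the same $K_X+\Delta$, the induced map $\mu:Y_i\rtmap Z$ is a \emph{morphism}. This is false: if $Y_i$ and $Z$ are related by a flop (both crepant, both nef), neither maps to the other. Lemma~3.6.6 of \cite{BCHM10} gives a morphism from a weak log canonical model to the \emph{ample} model, not to another weak log canonical model; the Negativity Lemma only yields $p^*(K_{Y_i}+\Gamma_i)=q^*(K_Z+\psi_*\Delta)$ on a common resolution, which is symmetric in $Y_i$ and $Z$ and cannot produce a morphism in one direction. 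Your subsequent attempt to bound the ``intermediate'' contractions $Y_i\to Z\to W$ by the faces of $\mcW_{\phi_i,A,\pi}(V)$ is also not justified: the face structure of that polytope records the loci where $K_{Y_i}+\Gamma$ becomes trivial on prescribed extremal rays, i.e.\ it controls the \emph{ample} models, whereas an intermediate crepant contraction $Y_i\to Z$ only contracts some subface of $\NE(Y_i/W)$ not tied to any face of the polytope.

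The paper's proof avoids both problems by first \emph{enlarging} the ambient space. After passing to a log resolution (so $X$ is smooth), it adjoins general ample $\mbQ$-divisors $H_1,\ldots,H_q$ spanning $N^1(X/U)$ and replaces $V$ by the larger $W$. The point is that, given any weak log canonical model $\psi:X\rtmap Z$ of $K_X+\Delta$ with $\Delta\in\mcC$, one can take a $\mbQ$-factorialization of $Z$ and use \cite[Lemma~3.6.12]{BCHM10} to find a nearby $\Delta'\in\mcC'\subset\mcL_A(W)$ for which $\psi$ is the \emph{ample} model of $K_X+\Delta'$. Now Proposition~\ref{pro:polytope-finite} supplies a log minimal model $\phi_i$ of $K_X+\Delta'$, and \cite[Lemma~3.6.6(4)]{BCHM10} legitimately gives a contraction $Y_i\to Z$, which Corollary~\ref{cor:weak-polytope} then places among finitely many $f_{i,m}$. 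In short, the missing idea is the perturbation $\Delta\rightsquigarrow\Delta'$ that upgrades $Z$ from a weak log canonical model to an ample model; without enlarging $V$ you cannot perform this perturbation, and the morphism $Y_i\to Z$ you need simply does not exist in general.
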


\begin{proof}
	This result corresponds to Lemma 7.2 in \cite{BCHM10}.\\
	By \cite[Lemma 3.7.4]{BCHM10} for every $\Delta\in\mcC$ there exists a $\Delta'\>0$ such that $K_X+\Delta'$ is KLT and $K_X+\Delta'\sim_{\mbR, U}K_X+\Delta$. Then by \cite[Lemma 3.6.9]{BCHM10} $\psi:X\rtmap Z$ is a weak log canonical model of $K_X+\Delta$ over $U$ if and only if $\psi'$ is a weak log canonical model of $K_X+\Delta'$ of over $U$. Therefore by \cite[Lemma 3.7.4]{BCHM10} we may assume that $K_X+\Delta$ is KLT for every $\Delta\in\mcC$.\\

Let $G\>0$ be a divisor such that it contains the support of every divisor in $V$ and $f:Y\to X$ a log resolution of $(X, G)$. For a $\Delta\in\mcL_A(V)$ we can write
\[K_Y+\Gamma=f^*(K_X+\Delta)+E,    \]	
where $\Gamma\>0$ and $E\>0$ have no common components, $f_*\Gamma=\Delta$ and $f_*E=0$.\\
If $\psi:X\rtmap Z$ is a weak log canonical model of $K_X+\Delta$ over $U$, then $\psi\circ f:Y\rtmap Z$ is a weak log canonical model of $K_Y+\Gamma$ over $U$. Let $\mcC'$ be the image of $\mcC$ under the map $\Delta\to\Gamma$. Then $\mcC'$ is a rational polytope and $K_Y+\Gamma$ is KLT for all $\Gamma\in\mcC'$. In particular, $\mathbf{B}_+(f^*A/U)$ does not contain any LC centers of $K_Y+\Gamma$ for any $\Gamma\in\mcC'$. Let $W$ be the subspace of $\WDiv_\mbR(Y)$ spanned by the strict transforms of the components of $G$ and the exceptional divisors of $f$. Then by \cite[Lemma 3.7.3 and 3.6.9]{BCHM10} we may assume that there exists a general ample$/U$ $\mbQ$-divisor $A'$ on $Y$ such that $\mcC'\subset\mcL_{A'}(W)$. Replacing $X$ by $Y$ and $\mcC$ by $\mcC'$ we assume that $X$ is smooth.\\

Let $H_1\>0, H_2\>0,\ldots, H_q\>0$ be general ample$/U$ $\mbQ$-Cartier divisor on $X$ such that they generate $\WDiv_\mbR(X)$ modulo numerical equivalence over $U$. Let $H=H_1+H_2+\cdots+H_q$. By \cite[Lemma 3.7.4]{BCHM10} we may assume that if $\Delta\in\mcC$, then $\Delta$ contains the support of $H$. Let $W$ be the affine subspace of $\WDiv_\mbR(X)$ spanned by $V$ and the support of $H$. Let $\mcC'$ be a rational polytope in $\mcL_A(W)$ containing $\mcC$ in its interior such that $K_X+\Delta$ is KLT for all $\Delta\in\mcC'$.\\
Then by Proposition \ref{pro:polytope-finite} there are finitely many rational maps $\phi_i:X\rtmap Y_i$ $, 1\<i\<k$ over $U$, such that for any $\Delta'\in\mcC'\cap\mcE_{A, \pi}(W)$ there exists an index $1\<j\<k$ such that $\phi_j$ is a log minimal model of $K_X+\Delta'$ over $U$. By Corollary \ref{cor:weak-polytope} for each index $1\<i\<k$ there are finitely many projective contractions $f_{i, m}:Y_i\to Z_{i, m}$ over $U$ such that if $\Delta'\in\mcW_{\phi_i,A,\pi}(W)$ and there is a contraction $f:Y_i\to Z$ over $U$, with
\[K_{Y_i}+\Gamma_i=K_{Y_i}+\phi_{i,*}\Delta'\sim_{\mbR, U}f^*D,\] 	
	for some ample$/U$ $\mbR$-divisor $D$ on $Z$, then there is an index $(i, m)$ and an isomorphism $\xi:Z_{i, m}\to Z$ such that $f=\xi\circ f_{i, m}$. Let $\psi_j:X\rtmap Z_j$, $1\<j\<l$ be the finitely many rational maps obtained by composing every $\phi_i$ with every $f_{i, j}$.
	Pick $\Delta\in\mcC$ and let $\psi:X\rtmap Z$ be a weak log canonical model of $K_X+\Delta$ over $U$. Then $K_Z+\Theta$ is KLT and nef over $U$, where $\Theta=\psi_*\Delta$. Since $K_Z+\Theta$ is KLT, by \cite[Theorem 1.6]{Bir16} $Z$ has a $\mbQ$-factorization $\eta:Y'\to Z$, where $\eta$ is a small birational morphism and $Y'$ is $\mbQ$-factorial. Then by \cite[Lemma 3.6.12]{BCHM10} we may find $\Delta'\in\mcC'\cap\mcE_{A,\pi}(W)$ such that $\psi$ is an ample model of $K_X+\Delta'$ over $U$. Pick an index $1\<i\<k$ such that $\phi_i$ is a log minimal model of $K_X+\Delta'$ over $U$. By \cite[Lemma 3.6.6(4)]{BCHM10} there exists a contraction $f:Y_i\to Z$ such that
	\[K_{Y_i}+\Gamma_i=f^*(K_Z+\Theta'),\] 
	where $\Gamma_i=\phi_{i*}\Delta'$ and $\Theta'=\psi_*\Delta'$. As $K_Z+\Theta'$ is ample over $U$, it follows that there is an index $m$ and isomorphism $\xi:Z_{i, m}\to Z$ such that $f=\xi\circ f_{i, m}$. But then
	\[\psi=f\circ \phi_i=\xi\circ f_{i, m}\circ\phi_i=\xi\circ\psi_j,\] 
	for some index $1\<j\<l$.

	\end{proof}

\begin{corollary}\emph{(Finiteness of Weak Log Conical Models)}\label{cor:finite-weak-mmp}
	Let $\pi:X\to U$ be a projective contraction between two normal projective varieties with $\dim X=3$ and $\chr p>5$. Fix a general ample$/U$ $\mbQ$-divisor $A\>0$. Let $V$ be a finite dimensional affine subspace of $\WDiv_\mbR(X)$ which is defined over the rationals. Suppose that there is a KLT pair $(X, \Delta_0)$.\\
	Then there are finitely many birational maps $\psi_j:X\rtmap Z_j$ over $U$, $1\<j\<l$ such that if $\psi:X\rtmap Z$ is a weak log canonical model of $K_X+\Delta$ over $U$, for some $\Delta\in\mcL_A(V)$, then there is an index $1\<j\<l$ and an isomorphism $\xi:Z_j\to Z$ such that $\psi=\xi\circ\psi_j$. 
	\end{corollary}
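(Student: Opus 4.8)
The plan is to derive this corollary immediately from Theorem \ref{thm:polytope-weak-mmp} by taking the rational polytope $\mcC$ there to be all of $\mcL_A(V)$. So the only point that needs checking is that $\mcL_A(V)$ is itself a rational polytope; once this is known, Theorem \ref{thm:polytope-weak-mmp} applied with $\mcC=\mcL_A(V)$ (and with the given KLT pair $(X,\Delta_0)$ playing the role of its KLT hypothesis) produces exactly the finite list $\psi_j:X\rtmap Z_j$, $1\<j\<l$, together with the uniqueness-up-to-isomorphism statement asserted here, since every $\Delta\in\mcL_A(V)$ lies in $\mcC$. If $\mcL_A(V)=\emptyset$ the statement is vacuous, so one may assume it nonempty.

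To check that $\mcL_A(V)$ is a rational polytope, I would argue as follows. By definition $\mcL_A(V)$ is the set of $\Delta=A+B$ with $B\in V$, $B\>0$, and $K_X+\Delta$ LC. Fixing a log resolution $f:Y\to X$ of the support of $A$ together with the supports of a spanning set of divisors of $V$, the log discrepancies of $(X,\Delta)$ along the components of $\Ex(f)$ and along the strict transforms of the components of $A+B$ are affine-linear functions of $\Delta$ with rational coefficients. Hence both the condition $B\>0$ and the LC condition are finite systems of rational linear inequalities on the affine space $V_A$, so $\mcL_A(V)$ is a rational polyhedron; it is bounded because $A\>0$ is ample, so allowing the coefficients of $B$ to grow without bound eventually forces one of these log discrepancies below $-1$ and destroys the LC property. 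This is precisely the compactness of $\mcL_A(V)$ already used in the proof of Theorem \ref{thm:finite-hyperplane}, and it corresponds to \cite[Lemma 3.7.2]{BCHM10}, whose proof carries over unchanged to the present setting in view of Remark \ref{rmk:bchm-lemmas}. Therefore $\mcL_A(V)$ is a rational polytope.

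I do not expect any genuine obstacle at this stage: all of the substance --- the finiteness of the maps and the fact that they absorb every weak log canonical model --- was already established in Theorem \ref{thm:polytope-weak-mmp} (and, behind it, in Proposition \ref{pro:polytope-finite} and Corollary \ref{cor:weak-polytope}). The corollary is obtained purely by passing from an arbitrary rational polytope inside $\mcL_A(V)$ to the whole of $\mcL_A(V)$, which is legitimate precisely because $\mcL_A(V)$ is a rational polytope. The only mild thing to be careful about is to record that the isomorphisms $\xi:Z_j\to Z$ in the conclusion are the same ones supplied by Theorem \ref{thm:polytope-weak-mmp}, so that no new argument beyond quoting that theorem is required.
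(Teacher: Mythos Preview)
Your proposal is correct and follows exactly the paper's own argument: the paper's proof simply records that $\mcL_A(V)$ is a rational polytope (implicitly via \cite[Lemma 3.7.2]{BCHM10}, cf.\ Remark~\ref{rmk:bchm-lemmas}) and then invokes Theorem~\ref{thm:polytope-weak-mmp} with $\mcC=\mcL_A(V)$. One tiny quibble: the boundedness of $\mcL_A(V)$ does not actually use the ampleness of $A$ --- the LC condition alone caps the coefficients of $B$ --- but this does not affect the validity of your reduction.
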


\begin{proof}
	This result corresponds to Lemma 7.3 in \cite{BCHM10}.\\
	Since $\mcL_A(V)$ is a rational polytope, the statement follows from Theorem \ref{thm:polytope-weak-mmp}.

	\end{proof}

\begin{proof}[Proof of Theorem \ref{thm:finite-mmp}]
First we prove $(1)$ and $(2)$. Since ample models are unique by \cite[Lemma 3.6.6(1)]{BCHM10}, by Corollary \ref{cor:finite-weak-mmp} and \ref{cor:weak-polytope} it suffices to prove that if $\Delta\in\mcE_{A,\pi}(V)$, then $K_X+\Delta$ has both a log minimal model over $U$ and an ample model over $U$.\\
By \cite[Lemma 3.7.5 and 3.6.9]{BCHM10} we may assume that $K_X+\Delta$ is KLT. Then \cite[Theorem 1.2]{Bir16} gives the existence of a log minimal model of $K_X+\Delta$ over $U$, and the existence of the ample model follows from Lemma \ref{lem:semi-ample-to-ample}.\\

Part $(3)$ follows as in the proof of Corollary \ref{cor:weak-polytope}. Indeed if $\Delta_1, \Delta_2,\ldots, \Delta_k$ are the vertices of $\mcW_{\phi, A, \pi}(V)$ for a birational contraction $\phi:X\rtmap Y$, and $\Delta$ and $\Delta'$ are two divisors lying in the interior of $\mcW_{\phi, A, \pi}(V)$, then for a given (fixed) $0\<l\<k$ we can write $\Delta=\sum\mu_j\Delta_j$ and $\Delta'=\sum\mu'_j\Delta_j$ for some $\mu_l>0$. Therefore from the base-point free theorem \cite[Theorem 1.2]{BW17} it follows that a curve $C$ is contracted by $K_Y+\Gamma=K_Y+\phi_*\Delta$ if and only if it is contracted by $K_Y+\Gamma'=K_Y+\phi_*\Delta'$. In particular, the interior of $\mcW_{\phi, A, \pi}(V)$ is contained in a single ample model $\mcA_{\psi, A, \pi}(V)$ for some projective contraction $\psi:Y\to Z$. Therefore $\mcW_{\phi, A, \pi}(V)\subset\bar{\mcA}_{\phi, A, \pi}(V)$.\\ 

 Part $(4)$ follows combining Part $(1), (2)$ and $(3)$.

\end{proof}

\begin{proof}[Proof of Corollary \ref{cor:adjoint-rings}]

	Let $V$ be the finite dimensional affine subspace of $\WDiv(X)_\mbR$ generated by the irreducible components of $\Delta_1, \Delta_2,\ldots, \Delta_k$. Then by Theorem \ref{thm:finite-mmp} there exist finitely many rational maps $\phi_i:X\rtmap Y_i$ over $U$, $1\<i\<q$, such that for every $\Delta\in\mcE_{A,\pi}(V)$ there is an index $1\<j\<q$ such that $\phi_j$ is a log minimal model of $K_X+\Delta$ over $U$. Let $\mcC\subset\mcL_A(V)$ be the (rational) polytope spanned by $\Delta_1, \Delta_2,\ldots, \Delta_k$ and let
	\[
		\mcC_j=\mcW_{\phi_j, A, \pi}(V)\cap\mcC.
	\]
	Then $\mcC_j$ is a rational polytope. Note that the ring $\mcR(\pi, D^\bullet)$ is finitely generated if and only if the rings corresponding to the (rational) polytope $\mcC_j$ are finitely generated for all $j=1, 2,\ldots, q$. Therefore replacing $\Delta_1, \Delta_2,\ldots, \Delta_k$ by the vertices of $\mcC_j$ we may assume that $\mcC=\mcC_j$ and $\phi:X\rtmap Y$ is a log minimal model for all $\Delta_1, \Delta_2,\ldots, \Delta_k$. Let $\pi':Y\to U$ be the induced morphism. Let $\Gamma_i=\phi_*\Delta_i$ for all $1\<i\<k$. Let $g:W\to X$ and $h:W\to Y$ be a resolution of the graph of $\phi:X\rtmap Y$.
	\[
		\xymatrixcolsep{3pc}\xymatrixrowsep{3pc}\xymatrix{
		& W\ar[dl]_g\ar[dr]^h &\\
		X\ar@{-->}[rr]^\phi\ar[dr]_\pi && Y\ar[dl]^{\pi'}\\
		& U &
		}
	\]
	
Then we have
\begin{equation*}
	g^*(K_X+\Delta_i)=h^*(K_Y+\Gamma_i)+F_i
\end{equation*}	
for $1\<i\<k$.\\
Note that $F_i\>0$ is an effective $h$-exceptional divisor, since $\phi:X\rtmap Y$ is a log minimal model$/U$ for $\Delta_i$, $1\<i\<k$.
Let $m>0$ be positive integer such that $G_i=m(K_Y+\Gamma_i)$ and $D_i=m(K_X+\Delta_i)$ are both Cartier for all $1\<i\<k$. Then from the projection formula it follows that 
\[
\mcR(\pi, D^\bullet)\cong\mcR(\pi', G^\bullet).	
\]
Therefore replacing $X$ by $Y$ we may assume that $X$ is $\mbQ$-factorial and $K_X+\Delta_i$ is KLT and nef over $U$ for all $1\<i\<k$. Since $\Delta_i$ is big for all $1\<i\<k$, by \cite[Theorem 1.2]{BW17} $K_X+\Delta_i$ is semi-ample for all $1\<i\<k$. Therefore it follows that the ring $\mcR(\pi, D^\bullet)$ is finitely generated.

\end{proof}

\section{The duality of pseudo-effective divisors and movable curves}
In this section we will work on projective varieties of arbitrary dimension and over an algebraically closed ground field $k=\overline{k}$ of arbitrary characteristic. We will prove Theorem \ref{thm:cone-duality} and \ref{thm:covering-rational-curves} here.

\begin{definition}(Movable curves, strongly movable curves and nef curves)
	Let $X$ be a projective variety. An irreducible curve $C$ is called \emph{movable} if there exists an algebraic family of irreducible curves $\{C_t\}_{t\in T}$ such that $C=C_{t_0}$ for some $t_0\in T$ and $\cup_{t\in T} C_t\subset X$ is dense in $X$.\\
	A class $\gamma\in N_1(X)_\mbR$ is called movable if there exists a movable curve $C$ such that $\gamma=[C]$ in $N_1(X)_\mbR$.\\ 
	
	An irreducible curve $C$ is called \emph{strongly movable} if there exists a projective birational morphism $f:X'\to X$ and ample divisors $H_1', H_2',\ldots, H_{n-1}'$ on $X'$ such that $C=f_*(H_1'\cap H_2'\cap\ldots \cap H_{n-1}')$, where $n=\dim X=\dim X'$.\\
	A class $\gamma\in N_1(X)_\mbR$ is called strongly movable if there exists a strongly movable curve $C$ such that $\gamma=[C]$ in $N_1(X)_\mbR$.\\ 
	
	An irreducible curve $C$ is called a \emph{nef curve} if $D\cdot C\>0$ for every effective Cartier divisor $D\>0$. A class $\gamma\in N_1(X)_\mbR$ is called nef is there exists a nef curve $C$ such that $\gamma=[C]$.\\  
	\end{definition}

\begin{definition}\mbox{(Cone of movable, strongly movable and nef curves)}
	Let $X$ be a projective variety. The closure in $N_1(X)_\mbR$ of the cone of effective classes of movable curves 
	\[\NM(X)=\overline{\left\{\sum a_i\gamma_i: a_i\>0 \mbox{ and } \gamma_i\in N_1(X)_\mbR \mbox{ is movable }\right\}} \]
	is called the \emph{cone of movable curves}.\\
	
	The closure in $N_1(X)_\mbR$ of the cone of effective classes of strongly movable curves 
		\[\SNM(X)=\overline{\left\{\sum a_i\gamma_i: a_i\>0 \mbox{ and } \gamma_i\in N_1(X)_\mbR \mbox{ is strongly movable }\right\}} \]
		is called the \emph{cone of strongly movable curves}.\\ 
		
	The closure in $N_1(X)_\mbR$ of the cone of effective classes of nef curves 
		\[\NF(X)=\overline{\left\{\sum a_i\gamma_i: a_i\>0 \mbox{ and } \gamma_i\in N_1(X)_\mbR \mbox{ is nef }\right\}} \]
		is called the \emph{cone of nef curves}.\\
	\end{definition}


The following theorem of Takagi on the existence of Fujita approximation in arbitrary characterisitc is one of the main ingredient of our proof.
 
\begin{theorem}\emph{(Fujita's approximation theorem)\cite[Corollary 2.16]{Tak07}}\label{thm:Takagi's-Theorem}
	Let $X$ be a projective variety defined over an algebraically closed field $k$ of arbitrary characteristic. Let $\xi\in N^1(X)_\mbR$ be a big divisor class. Then for any real number $\ve>0$ there exists a birational morphism $\mu:X'\to X$ from a projective variety $X'$ and a decomposition
	\[\mu^*(\xi)=a+e \]
	in $N^1(X')_\mbR$ such that 
	\begin{enumerate}
	\item $a$ is an ample class and $e$ is effective, and
	\item $\vol_{X'}(a)>\vol_{X}(\xi)-\ve$.\\
	\end{enumerate}
	\end{theorem}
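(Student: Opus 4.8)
The statement is quoted verbatim from \cite[Corollary 2.16]{Tak07}, and in this paper it is used only as a black box; accordingly, I will just outline the proof strategy, which is the characteristic-free refinement of the argument in \cite[\S 11.4]{Laz04b}.

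The plan is as follows. First I would reduce to the case that $X$ is normal and $\xi = [D]$ for an integral big Cartier divisor $D$, using the behaviour of $\vol$ under pullback and normalization, its homogeneity of degree $\dim X$, and its continuity on the big cone (all valid over an arbitrary field). For each $p$ with $h^0(X, pD) > 0$, let $\mathfrak b_p = \mathfrak b(|pD|)$ be the base ideal and let $\mu_p : X_p \to X$ be the normalized blow-up of $\mathfrak b_p$, which exists over any field; then $\mathfrak b_p \cdot \mathcal O_{X_p} = \mathcal O_{X_p}(-E_p)$, and one writes $\mu_p^*(pD) = M_p + E_p$ with $M_p$ globally generated (hence nef) and $E_p \geq 0$. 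Since $M_p$ is nef and big, asymptotic Riemann--Roch (characteristic-free) gives $\vol_{X_p}(M_p) = (M_p^{\dim X})$, while the inclusion $H^0(X_p, kM_p) \injective H^0(X_p, \mu_p^*(pkD)) = H^0(X, pkD)$ yields the elementary bound $\vol_{X_p}(M_p) \leq p^{\dim X}\vol_X(D)$. The genuine content is the matching asymptotic lower bound
\[
\liminf_{p\to\infty}\ \frac{\vol_{X_p}(M_p)}{p^{\dim X}}\ \geq\ \vol_X(D).
\]
Granting this, fix $p$ large with $\vol_{X_p}(M_p) > p^{\dim X}(\vol_X(D) - \ve/2)$; by Kodaira's lemma write $M_p \sim_\mbQ A_0 + N_0$ with $A_0$ ample and $N_0 \geq 0$, so that $\tfrac{k-1}{k}M_p + \tfrac1k A_0$, being a sum of a nef and an ample class, is ample, and set
\[
a = \tfrac1p\Big(\tfrac{k-1}{k}M_p + \tfrac1k A_0\Big),\qquad e = \tfrac1p\Big(\tfrac1k N_0 + E_p\Big),
\]
so that on $X' := X_p$ one has $\mu_p^*(\xi) = a + e$ with $a$ ample, $e$ effective, and $\vol_{X'}(a) > \vol_X(\xi) - \ve$ for $k$ large, by continuity of $\vol$.

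The main obstacle is the displayed asymptotic estimate for $\vol_{X_p}(M_p)/p^{\dim X}$. In characteristic zero this is obtained via asymptotic multiplier ideals and Nadel-type vanishing (as in \cite[\S 11.4]{Laz04b}, ultimately in the spirit of \cite{BDPP13}), none of which survives in characteristic $p$. The way around it — and this is precisely what \cite{Tak07} supplies — is to extract the estimate from the multiplicativity $\mathfrak b_p \cdot \mathfrak b_q \subseteq \mathfrak b_{p+q}$ of the graded family of base ideals together with Okounkov-body techniques, which use neither resolution of singularities nor vanishing theorems. Since every other ingredient (normalized blow-ups, asymptotic Riemann--Roch for nef classes, Kodaira's lemma, continuity of the volume) is characteristic-free, this gives Theorem \ref{thm:Takagi's-Theorem}, which we invoke only in the form stated.
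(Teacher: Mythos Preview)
Your reading is correct: the paper does not prove Theorem~\ref{thm:Takagi's-Theorem} at all --- it is imported wholesale from \cite[Corollary~2.16]{Tak07} and used purely as input to Theorem~\ref{thm:Fujita-orthogonality} and the proof of Theorem~\ref{thm:cone-duality}. So there is no ``paper's own proof'' to compare against, and your decision to treat it as a black box matches exactly what the paper does.

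One small historical correction to your sketch: Takagi's 2007 argument does \emph{not} go through Okounkov bodies --- that route is due to Lazarsfeld--Musta\c{t}\u{a} and came slightly later. Takagi instead builds a positive-characteristic substitute for the asymptotic multiplier ideal machinery of \cite[\S11.4]{Laz04b}, using Frobenius and (asymptotic) test ideals to recover the key subadditivity/containment statements that drive the volume estimate $\liminf_p \vol_{X_p}(M_p)/p^{\dim X} \geq \vol_X(D)$. The overall architecture you describe (normalized blow-up of base loci, decompose $\mu_p^*(pD)=M_p+E_p$, then perturb $M_p$ to an ample class via Kodaira's lemma and continuity of volume) is the right one; only the mechanism supplying the asymptotic lower bound is misattributed. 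This does not affect the paper under review, which only needs the statement.
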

 
\begin{theorem}\label{thm:Fujita-orthogonality}
	Let $X$ be a projective variety of dimension $n$ over an algebraically closed field $k$ of arbitrary characteristic. Let $\xi\in N^1(X)_\mbR$ be a big divisor class. Consider a Fujita approximation of $\xi$:
	\[\mu: X'\to X, \qquad \mu^*\xi=a+e. \]
Let $h\in N^1(X)_\mbR$ be an ample class such that $h\pm\xi$ are both ample. Then
 \[ (a^{n-1}\cdot e)^2_{X'}\<20\cdot(h^n)_X\cdot \biggl(\vol_{X}(\xi)-\vol_{X'}(a)\biggr).\]\\
	\end{theorem}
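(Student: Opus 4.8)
The plan is to follow the argument in \cite[Theorem 11.4.19 (and its auxiliary estimates)]{Laz04b}, which establishes this ``Fujita orthogonality'' inequality over $\mbC$ using only intersection-theoretic manipulations and the Fujita approximation theorem; the point is that Takagi's Theorem \ref{thm:Takagi's-Theorem} supplies the needed approximation in arbitrary characteristic, so the numerical estimates go through verbatim. First I would record the basic inequalities that hold in $N^1(X')_\mbR$: since $\mu^*\xi = a+e$ with $a$ ample and $e$ effective (hence pseudo-effective), and since $\mu^*h \pm \mu^*\xi$ are nef with $\mu^*h$ nef and big, we can compare volumes and intersection numbers of $a$, $e$, $\mu^*\xi$, and $\mu^*h$. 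In particular $\vol_{X'}(a) = (a^n)_{X'}$ since $a$ is ample, and $\vol_X(\xi) = \vol_{X'}(\mu^*\xi) \geq (\mu^*\xi \cdot a^{n-1})_{X'} \geq (a^n)_{X'}$ by the standard superadditivity/log-concavity of intersection numbers against nef classes.

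The heart of the matter is the telescoping identity
\[
\vol_X(\xi) - \vol_{X'}(a) \;\geq\; \bigl((\mu^*\xi)^n\bigr)_{X'} - (a^n)_{X'} \;=\; \sum_{i=0}^{n-1} \bigl((\mu^*\xi)^i \cdot a^{n-1-i} \cdot e\bigr)_{X'},
\]
using $\mu^*\xi - a = e$ and expanding. Each summand on the right is a nonnegative intersection of nef classes with the pseudo-effective class $e$, and in particular the term $i=0$ is exactly $(a^{n-1}\cdot e)_{X'}$. So the real work is to bound $(a^{n-1}\cdot e)_{X'}$ from above in terms of $\bigl((\mu^*\xi)^i \cdot a^{n-1-i}\cdot e\bigr)_{X'}$ for larger $i$, together with a factor controlled by $(h^n)_X$. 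Here I would use the inequality $a \leq \mu^*\xi \leq \mu^*(2h)$ (in the sense of differences being nef, after absorbing constants — more precisely $\mu^*h \pm a$ and $\mu^*h \pm \mu^*\xi$ are nef, so $2\mu^*h - a$ and $2\mu^*h - \mu^*\xi$ are nef), which lets one trade a factor of $a$ or $\mu^*\xi$ for a factor of $\mu^*h$ at the cost of an absolute constant, and then estimate $\bigl((\mu^*h)^{n-1}\cdot e\bigr)_{X'} = \bigl(h^{n-1}\cdot \mu_*e\bigr)_X$ by pushing forward and comparing with $(h^n)_X$ via $\mu_* e = \mu_*(\mu^*\xi - a)$ and $h \geq \xi$. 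Tracking the combinatorial constant from the binomial expansion and the repeated substitutions $a, \mu^*\xi \rightsquigarrow 2\mu^*h$ over the $n$ terms is exactly what produces the explicit constant $20$; I would not grind through the optimization but would check that the crude bounds suffice.

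The main obstacle is purely bookkeeping: making sure every intersection number that appears is genuinely nonnegative (which requires each factor to be nef and the ``last'' factor $e$ to be pseudo-effective, both of which hold here), and then squeezing the chain of inequalities so that the square on the left-hand side matches. The squaring comes from an application of the Cauchy–Schwarz-type (or Hodge-index-type) inequality for intersection numbers against nef classes, namely $(a^{n-1}\cdot e)^2 \leq (a^{n-1}\cdot (\text{something big}))\cdot(a^{n-2}\cdot e^2\cdot(\cdots))$ — or more simply the elementary bound $(a^{n-1}\cdot e)^2 \le (a^n)(a^{n-2}\cdot e^2)$ up to replacing factors — applied so that one factor feeds into $(h^n)_X$ and the other into the volume defect $\vol_X(\xi) - \vol_{X'}(a)$. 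Since none of these steps uses vanishing theorems, characteristic-$0$ resolution, or anything beyond Takagi's approximation theorem and standard numerical positivity (which hold in arbitrary characteristic), the argument of \cite[Theorem 11.4.19]{Laz04b} transfers without change, and the constant $20$ is inherited from that reference.
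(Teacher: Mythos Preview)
Your overall strategy---observe that Lazarsfeld's proof of the orthogonality estimate uses only Fujita approximation, Hodge-type inequalities, and elementary numerical positivity, all of which are characteristic-free once Takagi's theorem is in hand---is exactly the paper's approach (the paper simply cites \cite[Theorem 11.4.21]{Laz04b} and lists the ingredients). So at the level of ``what makes the proof go through in positive characteristic,'' you are on target.

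However, your attempted sketch of Lazarsfeld's argument itself is not accurate, and if you were to rely on it rather than on the reference you would have a genuine gap. The displayed inequality
\[
\vol_X(\xi) - \vol_{X'}(a) \;\geq\; \bigl((\mu^*\xi)^n\bigr)_{X'} - (a^n)_{X'}
\]
requires $\vol_X(\xi) \geq (\mu^*\xi)^n_{X'} = (\xi^n)_X$, but $\xi$ is only assumed big, not nef, so there is no such comparison between volume and top self-intersection; likewise your earlier claim $\vol_{X'}(\mu^*\xi) \geq (\mu^*\xi \cdot a^{n-1})_{X'}$ is not a ``standard'' inequality and can fail. The actual mechanism in \cite{Laz04b} is different: one does \emph{not} compare $\vol(\xi)$ to $(\mu^*\xi)^n$ directly. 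Instead one uses the \emph{continuity of volume} (which the paper explicitly flags as one of the three ingredients, citing \cite[Theorem 2.2.44, Example 2.2.47]{Laz04a}) together with the Hodge-type estimates of \cite[Corollary 1.6.3, Lemma 11.4.22]{Laz04a} to control $\vol(\xi) - (a^n)$ from below by a quantity involving $(a^{n-1}\cdot e)$ and $(h^n)$. Your sketch omits continuity of volume entirely, and that omission is precisely where your telescoping argument breaks.

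Two minor points: the correct reference for the orthogonality estimate is \cite[Theorem 11.4.21]{Laz04b} (11.4.19 is the duality theorem it feeds into), and the constant $20$ is produced there by a specific quadratic optimization, not by the binomial bookkeeping you describe. Since you ultimately defer to Lazarsfeld anyway, none of this is fatal---but you should either drop the inaccurate sketch or replace it with the correct outline (Hodge inequalities plus continuity of volume), as the paper does.
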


\begin{proof}
This is Theorem 11.4.21 in \cite{Laz04a}. The main ingredients of the proof of \cite[Theorem 11.4.21]{Laz04b} are the Fujita's approximation theorem, Hodge type inequalities \cite[Corollary 1.6.3, Lemma 11.4.22]{Laz04a}, and the continuity of volume \cite[Theorem 2.2.44, Example 2.2.47]{Laz04a}. The Fujita's approximation theorem is known in positive characteristic due to \cite{Tak07} and the other two results are also known to hold in positive characteristic (their proofs in \cite{Laz04a} work in arbitrary characteristic). As a result, the proof of \cite[Theorem 11.4.21]{Laz04b} holds in arbitrary characteristic.\\
	\end{proof}

\begin{proof}[Proof of the Theorem \ref{thm:cone-duality}]
It is well known that $\Eff(X)\subset\SNM(X)^*$. By contradiction, assume that the inclusion $\Eff(X)\subsetneq\SNM(X)^*$ is strict. Then there exists a class $\xi\in N^1(X)_{\mbR}$ such that 
\[\xi\in\mbox{ boundary}\left(\Eff(X)\right) \mbox{  and  } \xi\in\mbox{ interior}\left(\SNM(X)^*\right). \]	
Fix an ample class $h$ such that $h\pm 2\xi$ is ample. Since $\xi$ lies in the interior of $\SNM(X)^*$, there exists $\ve>0$ such that $\xi-\ve h\in\SNM(X)^*$.	In particular,
\begin{equation}\label{eqn:main-inequality}
	\frac{(\xi\cdot\gamma)}{(h\cdot\gamma)}\>\ve
	\end{equation}
for every strongly movable class $\gamma$ on $X$. Now consider the class 
\[\xi_\delta=\xi+\delta h \quad\mbox{ for }\quad \delta>0. \]	
Since $\xi$ is pseudo-effective, $\xi_\delta$ is big for all $\delta>0$. For small $\delta>0$ consider a Fujita approximation (by Theorem \ref{thm:Takagi's-Theorem})
\[\mu_\delta:X'_{\delta}\to X, \qquad \mu^*(\xi_\delta)=a_\delta+e_\delta\] 	
such that
\begin{equation}\label{eqn:Fujita-approx}
	\vol_{X'_\delta}(a_\delta)\>\vol_{X}(\xi_\delta)-\delta^{2n}.
	\end{equation}	
Since $\delta^{2n}$ is a polynomial of degree $2n$ in $\delta$ and $\vol_{X}(\xi_\delta)=(\xi+\delta h)^n$ is a polynomial of degree $n$ in $\delta$ and $(h^n)>0$, for $\delta>0$ sufficiently small we may assume that $0<\delta^{2n}<\frac{1}{2}\vol_{X}(\xi_\delta)$. In particular,
\begin{equation}\label{eqn:ample-part-Fujita-approx}
	\vol_{X'_\delta}(a_\delta)\>\frac{1}{2}\cdot\vol_{X}(\xi_\delta)\>\frac{\delta^n}{2}\cdot (h^n).\\ 
	\end{equation} 	
Consider the strongly movable class
\[\gamma_\delta=\mu_{\delta, *}\left(a_{\delta}^{n-1}\right).\]	
Then by the projection formula and \cite[Corollary 1.6.3(ii)]{Laz04a}, we get
\begin{equation}\label{eqn:Hodge-with-ample}
	\begin{split}
		(h\cdot \gamma_\delta)_X &=(\mu^*_\delta(h)\cdot a^{n-1}_\delta)_{X'_\delta}\\
								 &\> (h^n)^{1/n}_X\cdot (a_\delta^n)^{(n-1)/n}_{X'_\delta}.
		\end{split}
		\end{equation}
On the other hand,
			
	\begin{equation}\label{eqn:Hodge-with-pseudo-effective}
		\begin{split}
			(\xi\cdot\gamma_\delta)_X &\<(\xi_\delta\cdot\gamma_\delta)_X\\
									  &=(\mu^*_\delta(\xi_\delta)\cdot a_\delta^{n-1})_{X'_{\delta}}\\
									  &=(a_{\delta}^n)_{X'_\delta}+(e_\delta\cdot a_\delta^{n-1})_{X'_\delta}.
			\end{split}	
\end{equation}	

Now $h\pm\xi_\delta$ is ample provided that $\delta<\frac{1}{2}$. Therefore by Theorem \ref{thm:Fujita-orthogonality} and \eqref{eqn:Fujita-approx} we get,
\begin{equation}\label{eqn:Fujita-orthogonality}
	\begin{split}	
	(e_\delta\cdot a^{n-1}_\delta)_{X'_\delta} &\<\left(20\cdot (h^n)_X\cdot (\vol_X(\xi_\delta)-\vol_{X'_\delta}(a_\delta))\right)^{1/2}\\
											   &\< C_1\cdot \delta^n,
	\end{split}
	\end{equation}
where $C_1=20\cdot(h^n)_X>0$ is independent of $\delta$.\\

Now from \eqref{eqn:ample-part-Fujita-approx}, \eqref{eqn:Hodge-with-ample}, \eqref{eqn:Hodge-with-pseudo-effective} and \eqref{eqn:Fujita-orthogonality} we get
\begin{equation}\label{eqn:main-inequality-contradiction} 
	\begin{split}		
	0\<\frac{(\xi\cdot\gamma_\delta)}{(h\cdot\gamma_\delta)} &\<\frac{(a^n_\delta)_{X'_\delta}+C_1\cdot \delta^n}{(h^n)^{1/n}_X\cdot (a^n_\delta)^{(n-1)/n}_{X'_\delta}}\\
	&\<\frac{1}{(h^n)^{1/n}_X}\cdot (a^n_\delta)^{1/n}_{X'_\delta}+\frac{C_1\cdot\delta^n}{(h^n)^{1/n}_X}\cdot\frac{2^{(n-1)/n}}{\delta^{n-1}\cdot (h^n)^{(n-1)/n}_X}\\
	&\< C_2\cdot (a^n_\delta)^{1/n}_{X'_\delta}+C_3\cdot\delta,
	\end{split}
	\end{equation}
where $C_2$ and $C_3$ are constants independent of $\delta$.\\

Now recall that $\xi$ lies on the boundary of the big cone. Therefore by the continuity of volume \cite[Theorem 2.2.44]{Laz04a}
\[\lim_{\delta\to 0}\vol_X(\xi_\delta)=\vol_X(\xi)=0,\]	
and hence also
\[\lim_{\delta\to 0}\vol_{X'_\delta}(a_\delta)=\lim_{\delta\to 0}(a^n_{\delta})_{X'_\delta}=0.\]
Thus from \eqref{eqn:main-inequality-contradiction} we see that $\frac{(\xi\cdot\gamma_\delta)}{(h\cdot\gamma_\delta)}\to 0$ as $\delta\to 0^+$. But this contradicts \eqref{eqn:main-inequality}, and completes the proof.\\

	\end{proof}

\begin{corollary}\label{cor:cone-equality}
Let $X$ be a projective variety defined over an algebraically closed field of arbitrary characteristic. Then the cone of movable curves, the strongly movable curves and the cone of nef curves all coincide, i.e., $\NM(X)=\SNM(X)=\NF(X)$.
\end{corollary}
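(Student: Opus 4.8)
The plan is to deduce Corollary \ref{cor:cone-equality} from Theorem \ref{thm:cone-duality} by a sequence of elementary inclusions, using that $\Eff(X)$ is self-adjoint in a suitable sense via Kleiman's criterion. First I would record the easy containments $\SNM(X)\subseteq\NM(X)\subseteq\NF(X)$. The first holds because a strongly movable curve $C=f_*(H_1'\cap\cdots\cap H_{n-1}')$ is in particular movable: the complete linear systems $|mH_i'|$ on $X'$ sweep out $X'$ and their pushforwards sweep out $X$, giving an algebraic covering family through $C$. The second holds because if $C$ is movable, its deformations cover a dense subset of $X$, so for any effective Cartier divisor $D\geq 0$ a general member $C_t$ of the family is not contained in $\Supp D$, whence $D\cdot C=D\cdot C_t\geq 0$; thus $[C]\in\NF(X)$, and taking closures gives $\NM(X)\subseteq\NF(X)$.

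The remaining inclusion $\NF(X)\subseteq\SNM(X)$ is the crux, and it is exactly where Theorem \ref{thm:cone-duality} enters. By Theorem \ref{thm:cone-duality} we have $\SNM(X)=\Eff(X)^{*}$, so it suffices to show every nef curve class lies in $\Eff(X)^{*}$, i.e. $\gamma\cdot\xi\geq 0$ for all $\xi\in\Eff(X)$ and all nef $\gamma$. Since $\Eff(X)$ is the closure of the cone generated by classes of effective Cartier divisors, it is enough to check $\gamma\cdot D\geq 0$ for $D\geq 0$ an effective Cartier divisor, and this is precisely the definition of $\gamma$ being a nef curve. Passing to closures, $\NF(X)\subseteq\Eff(X)^{*}=\SNM(X)$. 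Combining with the chain from the previous paragraph yields $\SNM(X)\subseteq\NM(X)\subseteq\NF(X)\subseteq\SNM(X)$, forcing all three cones to coincide.

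The one technical point worth isolating is the duality pairing: Theorem \ref{thm:cone-duality} asserts $\Eff(X)$ and $\SNM(X)$ are dual cones in the pairing between $N^1(X)_\mbR$ and $N_1(X)_\mbR$, so I should state clearly that $\SNM(X)=\Eff(X)^{*}:=\{\gamma\in N_1(X)_\mbR : \gamma\cdot\xi\geq 0\ \forall\,\xi\in\Eff(X)\}$ and that duality of closed convex cones is an involution, so also $\Eff(X)=\SNM(X)^{*}$. I expect the main obstacle to be purely expository rather than mathematical — making sure the definition of "nef curve" (testing against \emph{effective Cartier} divisors) is correctly matched with the generators of the pseudo-effective cone $\Eff(X)$, and confirming that $\Eff(X)$ is indeed the closed cone spanned by effective Cartier divisor classes so that checking nonnegativity on those generators suffices. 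No resolution of singularities or characteristic hypothesis is needed here beyond what Theorem \ref{thm:cone-duality} already supplies.
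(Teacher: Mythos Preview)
Your proposal is correct and follows essentially the same approach as the paper: both use that movable curves pair nonnegatively with effective Cartier divisors (via a general member of the family avoiding $\Supp D$) together with Theorem \ref{thm:cone-duality} to identify $\Eff(X)^*=\SNM(X)$, and both note $\NF(X)=\Eff(X)^*$ by definition. The only cosmetic difference is organization: you run the single chain $\SNM\subseteq\NM\subseteq\NF\subseteq\SNM$, whereas the paper first proves $\NF=\SNM$ and then $\NM=\SNM$ separately.
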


\begin{proof}
From the definition of nef curves it is clear that $\NF(X)=\Eff(X)^*$, and $\Eff(X)^*=\SNM(X)$ by Theorem \ref{thm:cone-duality}. Thus we only need to prove that $\NM(X)=\SNM(X)$. The inclusion $\SNM(X)\subset\NM(X)$ is clear. We will prove the other inclusion. Let $\gamma\in\NM(X)$. Then there exists a movable curve $C$ such that $\gamma=[C]$. By Theorem \ref{thm:cone-duality} it is enough to show that $D\cdot\gamma\>0$ for all effective Cartier divisors. Since $C$ belongs to an algebraic family of curves $\{C_t\}_{t\in T}$ such that $\cup_{t\in T}C_t$ covers a dense subset of $X$, we can find a curve $C_{t_1}$ in this family such that $C_{t_1}\nsubseteq\Supp (D)$. Thus $D\cdot\gamma=D\cdot C_{t_1}\>0$, i.e. $\gamma\in{\Eff(X)}^*=\SNM(X)$.\\	
	
	\end{proof}

\begin{proof}[Proof of Theorem \ref{thm:covering-rational-curves}]
If there exists an algebraic family of $K_X$-negative rational curves covering a dense subset of $X$, then from Theorem \ref{thm:cone-duality} and Corollary \ref{cor:cone-equality} it follows that $K_X$ it not pseudo-effective.\\

Now assume that $K_X$ is not pseudo-effective. Then by Theorem \ref{thm:cone-duality} and Corollary \ref{cor:cone-equality}, there exist a movable class $\gamma\in\NM(X)$ and an algebraic family of irreducible curves $\{C_t\}_{t\in T}$ representing $\gamma$ such that $K_X\cdot C_t<0$ for all $t\in T$ and $\cup_{t\in T}C_t\subset X$ is dense in $X$. We fix a very ample divisor $H$ in $X$. Then by \cite[Theorem 1.1]{Miy95} there exist rational curves $\{C'_s\}_{s\in S}$ of bounded degree through every point of $\cup_{t\in T}C_t\subset X$ such that 
\begin{equation}
	H\cdot C'_s\<\frac{2\dim X\cdot(H\cdot\gamma)}{-K_X\cdot\gamma}\quad\mbox{and}\quad 0<-(K_X\cdot C'_s)\<\dim X+1 \mbox{ for all } s\in S.
	\end{equation}
By \cite[Corollary 1.19(3)]{KM98} there are finitely many subclasses of these rational curves, say $\{C'_s\}_{s\in S_i}$, $1\<i\<n, S_i\subset S \mbox{ and }\coprod_{i=1}^n S_i=S$, such that for each fixed $i$, any two curves in $\{C'_s\}_{s\in S_i}$ are numerically equivalent. Now since $\cup_{s\in S}C'_s\subset X$ is dense in $X$, it follows that one of these subclasses, say $\{C'_\lambda\}_{\lambda\in\Lambda}$, $\Lambda=S_{i_k}$ for some $i_k\in\{1, 2,\ldots, n\}$ has the property that $\cup_{\lambda\in\Lambda}C'_\lambda\subset X$ is dense in $X$. Let $d:=H\cdot C'_\lambda$. Then the curves $\{C'_\lambda\}_{\lambda\in\Lambda}$  belong to the family $\Univ_d(\mbP^1, X)\to \Hom_d(\mbP^1, X)$, where $\Hom_d(\mbP^1, X)$ is the scheme of degree $d$ morphisms $\mbP^1\to X$. Let $V'\subset\Hom_d(\mbP^1, X)$ be the connected component of $\Hom_d(\mbP^1, X)$ which contains all the points corresponding to the curves $\{C'_\lambda\}_{\lambda\in\Lambda}$ (since all $C'_\lambda$'s are numerically equivalent, they are contained in a connected component). Let $\mathcal{U}'=V'\times_{\Hom_d(\mbP^1, X)}\Univ_d(\mbP^1, X)$. Then $\mathcal{U}'\to V'$ is a (flat) family of rational curves $\Gamma_t$ such that $K_X\cdot \Gamma_t<0$ and $\cup_{t\in V'}\Gamma_t\subset X$ is dense in $X$. Finally, let $V$ be an irreducible component of $V'$ such that $\cup_{t\in V}\Gamma_t\subset X$ is dense in $X$. Set $\mathcal{U}=V\times_{V'}\mathcal{U}'=V\times_{\Hom_d(\mbP^1, X)}\Univ_d(\mbP^1, X)$. Then $\mathcal{U}\to V$ is a (flat) algebraic family of rational curves satisfying the required conditions.\\

	\end{proof}

\begin{proof}[Proof of Corollary \ref{cor:uniruled-criterion}]
Following the notations as in the proof of Theorem \ref{thm:covering-rational-curves} we see that the evaluation map $\ev:\mbP^1\times_k V\to X$ is a dominant morphism, where $V$ is an irreducible variety. Thus by \cite[Remark 4.2(2)]{Deb01} $X$ is uniruled. 	
\end{proof}

\section{The structure of the nef cone of curves}
In this section we prove the structure theorem for nef cone of curves. It gives a partial answer to Batyrev's Conjecture \ref{con:batyrev} in positive characteristic.\\

We define coextremal rays and bounding divisors as in \cite{Leh12}.
\begin{definition}
	Let $\alpha$ be a class in $\NM(X)$. A \emph{coextremal} ray  $\mbR_{\>0}\alpha\subset N_1(X)$ is a $(K_X+\Delta)$-negative ray of $\NM(X)$ which is extremal for $\NE(X)_{K_X+\Delta\>0}+\NM(X)$; equivalently it satisfies the following properties:
	\begin{enumerate}
		\item $(K_X+\Delta)\cdot\alpha<0$.
		\item If $\beta_1, \beta_2\in\NE(X)_{K_X+\Delta\>0}+\NM(X)$ and $\beta_1+\beta_2\in\mbR_{\>0}\alpha$, then $\beta_1, \beta_2\in\mbR_{\>0}\alpha$.\\
	\end{enumerate}
\end{definition}

\begin{definition}
	A non-zero $\mbR$-Cartier divisor $D$ is called a bounding divisor if it satisfies the following properties:
	\begin{enumerate}
		\item $D\cdot\alpha\>0$ for every class $\alpha$ in $\NE(X)_{K_X+\Delta\>0}+\NM(X)$.
		\item $D^\bot$ contains some coextremal ray.
	\end{enumerate}
For a subset $V\subset N_1(X)$, a bounding divisor $D$ is called a $V$-bounding divisor if $D\cdot\alpha\>0$ for all $\alpha\in V$.\\	
\end{definition}

We will need the following results first.

\begin{lemma}\label{lem:ltm-to-mfs}
 Let $(X, \Delta\>0)$ be a $\mbQ$-factorial projective KLT pair of dimension $3$ and $\chr p>5$. Suppose that $\Delta$ is a big $\mbR$-divisor. If $K_X+\Delta$ lies on the boundary of\ $\Eff(X)$, then there exists a birational contraction $\phi:X\rtmap X'$, a projective morphism $f:X'\to Y$ and an ample $\mbR$-divisor $L$ on $Y$ such that $K_{X'}+\Delta'\sim_{\mbR}f^*L$ and $-(K_{X'}+G')$ is $f$-ample for some KLT pair $(X', G')$, where $K_{X'}+\Delta'=\phi_*(K_X+\Delta)$.
\end{lemma}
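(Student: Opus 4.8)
The plan is: run a $(K_X+\Delta)$-MMP to reach a log minimal model, use the base-point free theorem to turn it into a fibration $f\colon X'\to Y$, observe $\dim Y<3$ because $K_X+\Delta$ is not big, and finally perturb the boundary on $X'$ to manufacture the relatively anti-ample KLT pair $(X',G')$.

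\textbf{Step 1 (MMP and semi-ampleness).} Since $K_X+\Delta$ lies on the boundary of $\Eff(X)$ it is pseudo-effective but not big. As $(X,\Delta)$ is KLT and pseudo-effective, \cite[Theorem 1.2]{Bir16} provides a log minimal model $\phi\colon X\rtmap X'$ of $(X,\Delta)$: thus $\phi$ is a birational contraction, $X'$ is $\mbQ$-factorial, and $K_{X'}+\Delta'=\phi_*(K_X+\Delta)$ is KLT (KLT being preserved along the steps of the MMP) and nef. Because $\Delta$ is big, $\Delta'=\phi_*\Delta$ is big, so by the base-point free theorem \cite[Theorem 1.2]{BW17} the divisor $K_{X'}+\Delta'$ is semi-ample. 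Let $f\colon X'\to Y$ be the associated contraction and $L$ an ample $\mbR$-divisor on $Y$ with $K_{X'}+\Delta'\sim_{\mbR}f^*L$.

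\textbf{Step 2 ($\dim Y<3$).} Take a common resolution $p\colon W\to X$, $q\colon W\to X'$ of $\phi$. Since $\phi$ is $(K_X+\Delta)$-negative we may write $p^*(K_X+\Delta)=q^*(K_{X'}+\Delta')+F$ with $F\>0$ $q$-exceptional, hence $\vol_X(K_X+\Delta)=\vol_{X'}(K_{X'}+\Delta')$. As $K_X+\Delta$ is not big this volume vanishes, so $f^*L$ is not big; were $f$ birational, $f^*L$ would be big, a contradiction. Thus $\dim Y<\dim X'=3$.

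\textbf{Step 3 (construction of $G'$).} Since $\Delta'$ is big, by Kodaira's lemma I would write $\Delta'\sim_\mbR A'+B'$ with $A'$ an effective ample $\mbQ$-divisor and $B'\>0$, choosing $A'$ general (small coefficients, no common components with $B'$) so that $(X',B')$ is KLT. For a rational $0<\ve\ll 1$ the $\mbQ$-divisor $(1-\ve)A'$ is ample, hence semi-ample, so by Theorem \ref{thm:general-bertini} (log resolutions exist on $3$-folds) there is $0\<A_0'\sim_\mbQ(1-\ve)A'$ with $(X',B'+A_0')$ KLT. Put $G':=B'+A_0'\>0$. Then $(X',G')$ is KLT and
\[
\Delta'-G'\sim_\mbR (A'+B')-B'-(1-\ve)A'=\ve A',
\]
so that
\[
-(K_{X'}+G')=-(K_{X'}+\Delta')+(\Delta'-G')\sim_{\mbR}-f^*L+\ve A',
\]
which is ample over $Y$ since $f^*L$ is $f$-trivial and $A'$ is ample. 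Together with Steps 1--2 this yields all the asserted data.

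\textbf{Main obstacle.} The input theorems (existence of log minimal models \cite[Theorem 1.2]{Bir16}, the base-point free theorem \cite[Theorem 1.2]{BW17}, and Tanaka's Bertini theorem \ref{thm:general-bertini}) are all available for $3$-folds in characteristic $p>5$, so the only genuinely delicate point is Step 2: extracting $\dim Y<3$ from the hypothesis that $K_X+\Delta$ lies on the boundary of the pseudo-effective cone. This uses essentially that the log minimal model $\phi$ is $(K_X+\Delta)$-negative, which is exactly what lets non-bigness be transported across $\phi$; the remainder is the standard Kodaira-plus-Bertini perturbation.
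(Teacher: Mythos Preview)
Your proof is correct and follows the same strategy as the paper: run the MMP to a minimal model, apply the base-point free theorem to get $f\colon X'\to Y$, and perturb the boundary to produce $G'$. Two minor remarks: your Step~2 is not required by the lemma as stated (the paper does not prove $\dim Y<3$ here, though that fact is indeed used downstream); and for Step~3 the paper takes the shorter route of writing $\Delta'\equiv A'+B'$ with $A'$ ample and $B'\>0$ and setting $G':=(1-\ve)\Delta'+\ve B'$ directly, so that $(X',G')$ is KLT (as $G'\<\Delta'+\ve B'$ and $(X',\Delta'+\ve B')$ is KLT for $0<\ve\ll 1$) and $-(K_{X'}+G')\equiv_f \ve A'$ is $f$-ample, bypassing the Bertini step.
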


\begin{proof}
	  Let $H\>0$ be an ample $\mbR$-divisor on $X$ such that $K_X+\Delta+H$ is KLT and nef (using Theorem \ref{thm:general-bertini}). Since $K_X+\Delta$ is pseudo-effective, by \cite[Theorem 1.6]{BW17} $(K_X+\Delta)$-MMP with the scaling of $H$ terminates with a log minimal model $\phi':X\rtmap X'$ such that $K_{X'}+\Delta'=\phi'_*(K_X+\Delta)$ is nef. Since $\Delta'$ is a big divisor, by perturbing $\Delta'$, from the base-point free theorem \cite[Theorem 1.2]{BW17} it follows that $K_{X'}+\Delta'$ is semi-ample. Therefore there exists a projective morphism $f:X'\to Y$ and an ample $\mbR$-divisor $L$ on $Y$ such that $K_{X'}+\Delta'\sim_\mbR f^*L$ (see \cite[Lemma 4.13]{Fuj11}). Write $\Delta'=\phi_*\Delta\num A'+B'$, where $A'$ is an ample $\mbR$-divisor. Then $(X', \Delta'+\ve B')$ is KLT for $0<\ve\ll 1$. In particular, $(X', (1-\ve)\Delta'+\ve B')$ is KLT. Let $G'=(1-\ve )\Delta'+\ve B'\>0$. Then $(X', G')$ is KLT and $K_{X'}+G'\num_f-\ve A'$.

	\end{proof}

\begin{remark}\label{rmk:ample-model}
	Note that the variety $Y$ in the lemma above is uniquely determined by $K_X+\Delta$, since it is the ample model of $(X, \Delta)$.\\ 
	\end{remark}

\begin{proposition}\label{pro:trimed-finiteness}
	 Let $(X, \Delta\>0)$ be a $\mbQ$-factorial projective KLT pair with $\dim X=3$ and $\chr p>5$. Suppose that $A\>0$ is a general ample $\mbQ$-divisor on $X$. Let $V$ be a finite dimensional subspace of $\WDiv_\mbR(X)$. Define
	\[\mcE_A=\{\Gamma\in V : \Gamma\>0 \mbox{ and } K_X+\Delta+A+\Gamma \mbox{ is KLT and pseudo-effective}       \}. \]
Then there are finitely many birational contractions $\phi_i:X\rtmap X_i$ for $1\<i\<k$ such that for every $\Gamma\in\mcE_A$ satisfying the property that  $K_X+\Delta+A+\Gamma$ lies on the boundary of $\Eff(X)$, there exists an index $1\<j\<k$ such that $\phi_j$ is a log minimal model of $K_X+\Delta+A+\Gamma$. Furthermore, corresponding to each $\phi_i$, there exists a unique log Fano fibration $g_i:X_i\to Y_i$ such that $K_{X_i}+{\phi_i}_*(\Delta+A+\Gamma)\sim_{\mbR}g^*_iL_i$, where $L_i$ is an ample $\mbR$-divisor on $Y_i$.\\  	
	\end{proposition}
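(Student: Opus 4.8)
\emph{Plan of proof.} The idea is to reduce the statement to the finiteness of log minimal models (Theorem~\ref{thm:finite-mmp}) together with the structural result Lemma~\ref{lem:ltm-to-mfs}.

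First I would arrange the setup so that Theorem~\ref{thm:finite-mmp} applies. Let $\pi:X\to\Spec k$ be the structure morphism, and let $V'\subset\WDiv_\mbR(X)$ be the linear span of the prime components of $\Delta$ together with the prime components of a finite spanning set of $V$. Then $V'$ is a finite dimensional affine subspace defined over the rationals --- each prime divisor being a rational Weil divisor --- and $\Delta+\Gamma\in V'$ for every $\Gamma\in V$. Since $A$ is a general ample $\mbQ$-divisor, we may assume that it has no component in common with any element of $V'$; and since $0\<\Delta\in V'$ with $K_X+\Delta$ KLT, all the hypotheses of Theorem~\ref{thm:finite-mmp} are met. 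Moreover, for $\Gamma\in\mcE_A$ we have $\Delta+A+\Gamma=A+(\Delta+\Gamma)$ with $\Delta+\Gamma\in V'$, $\Delta+\Gamma\>0$, and $K_X+\Delta+A+\Gamma$ KLT and pseudo-effective, so $\Delta+A+\Gamma\in\mcE_{A,\pi}(V')$. Theorem~\ref{thm:finite-mmp} then produces finitely many birational contractions $\phi_i:X\rtmap X_i$, $1\<i\<k$, with the property that every log minimal model of $K_X+\Delta+A+\Gamma$, for $\Gamma\in\mcE_A$, coincides with one of the $\phi_i$. Since $K_X+\Delta+A+\Gamma$ is KLT and pseudo-effective with $\Delta+A+\Gamma$ big, it admits a log minimal model by \cite[Theorem~1.2]{Bir16}, which is therefore one of the $\phi_i$; this proves the first assertion.

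Next, fix $\Gamma\in\mcE_A$ with $K_X+\Delta+A+\Gamma$ on the boundary of $\Eff(X)$, and let $\phi_j:X\rtmap X_j$ be a log minimal model of it, writing $\Delta'_j:=\phi_{j,*}(\Delta+A+\Gamma)$. Then $X_j$ is $\mbQ$-factorial, $(X_j,\Delta'_j)$ is KLT with $K_{X_j}+\Delta'_j$ nef, the divisor $\Delta'_j$ is big (being the push-forward of the big divisor $\Delta+A+\Gamma$ under a birational contraction, exactly as argued in the proof of Lemma~\ref{lem:ltm-to-mfs}), and $K_{X_j}+\Delta'_j$ still lies on the boundary of $\Eff(X_j)$: indeed $\phi_j$ is $(K_X+\Delta+A+\Gamma)$-negative, so both pseudo-effectiveness and the vanishing of the volume are preserved under the push-forward. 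Hence $(X_j,\Delta'_j)$ satisfies the hypotheses of Lemma~\ref{lem:ltm-to-mfs}, whose birational contraction is an isomorphism here because $K_{X_j}+\Delta'_j$ is already nef; we thus obtain a projective morphism $g:X_j\to Y$ and an ample $\mbR$-divisor $L$ on $Y$ with $K_{X_j}+\Delta'_j\sim_\mbR g^*L$ and $-(K_{X_j}+G')$ $g$-ample for some KLT pair $(X_j,G')$. As $K_{X_j}+\Delta'_j$ has volume zero it is not big, so $\dim Y<\dim X_j=3$ and $g$ is a genuine log Fano fibration.

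It remains to see that only finitely many fibrations $g$ occur, after which a re-indexing finishes the argument. Applying Lemma~\ref{lem:semi-ample-to-ample} to $(X,\Delta+A+\Gamma)$ with the weak log canonical model $\phi_j$, we see that the ample model $\psi:X\rtmap Z$ of $(X,\Delta+A+\Gamma)$ exists and factors through $\phi_j$ via a morphism $h:X_j\to Z$ with $K_{X_j}+\Delta'_j\sim_\mbR h^*H$ for an ample $\mbR$-divisor $H$ on $Z$. Since the morphism $g$ of the previous paragraph also satisfies $K_{X_j}+\Delta'_j\sim_\mbR g^*L$, the contractions $g$ and $h$ contract exactly the same curves, so $g=h$ up to an isomorphism $Y\cong Z$; thus the ample model of $(X,\Delta+A+\Gamma)$ equals $g\circ\phi_j$. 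By Theorem~\ref{thm:finite-mmp}(2) only finitely many rational maps arise in this way as $\Gamma$ varies over $\mcE_A$; since there are also finitely many possibilities for $\phi_j$, the pairs $(\phi_j,g)$ produced above form a finite set, which we list as $(\phi_i,\,g_i:X_i\to Y_i)_{1\<i\<k}$ (allowing a given birational contraction to be repeated). This yields the final assertion, with $L_i$ the ample $\mbR$-divisor on $Y_i$ determined by $K_{X_i}+\phi_{i,*}(\Delta+A+\Gamma)\sim_\mbR g_i^*L_i$. The step I expect to demand the most care, beyond the bookkeeping just sketched, is the identification of the ample model furnished by Theorem~\ref{thm:finite-mmp} with the log Fano fibration obtained from Lemma~\ref{lem:ltm-to-mfs}, together with the verification that the ``big versus boundary of $\Eff$'' dichotomy for $K_X+\Delta+A+\Gamma$ descends correctly to the log minimal model $X_j$.
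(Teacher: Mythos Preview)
Your proposal is correct and follows essentially the same approach as the paper: existence of the log minimal model and the fibration from Lemma~\ref{lem:ltm-to-mfs}, finiteness of the $\phi_i$ from Theorem~\ref{thm:finite-mmp}(1), and finiteness of the $g_i$ by identifying them with the ample models (the paper packages this last point as Remark~\ref{rmk:ample-model}, while you unwind it via Lemma~\ref{lem:semi-ample-to-ample} and Theorem~\ref{thm:finite-mmp}(2)). The paper's proof is three sentences; you have simply filled in the bookkeeping --- the construction of $V'$, the verification that boundary-of-$\Eff$ descends to $X_j$, and the curve-by-curve identification of $g$ with the ample model --- that the paper leaves implicit.
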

	
\begin{proof}
	This result corresponds to Theorem 3.2 in \cite{Leh12}.\\
	For a given $\Gamma\in\mcE_A$ the existence of $\phi_i$ and $g_i$ is clear from Lemma \ref{lem:ltm-to-mfs}. Then from Theorem \ref{thm:finite-mmp} and Remark \ref{rmk:ample-model} it follows that there are only finitely many such $\phi_i$ and $g_i$ for all $\Gamma\in\mcE_A$.\\
	\end{proof}

\begin{proposition}\label{pro:nbhd-lemma}
	Let $(X, \Delta)$ be a $\mbQ$-factorial projective KLT pair with $\dim X=3$ and $\chr p>5$. Suppose that $B\>0$ is a big $\mbR$-divisor such that $(X, \Delta+B)$ is KLT. Then there exist an open neighborhood $U\subset N^1(X)$ of $[K_X+\Delta+B]\in N^1(X)$ and a finite set of movable curves $\{C_i\}_{i=1}^N$ on $X$ such that for every class $\alpha\in U$ which lies on the boundary of $\Eff(X)$ we have $\alpha\cdot C_i=0$ for some $i\in\{1, 2,\ldots, N\}$.
	\end{proposition}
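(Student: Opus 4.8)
The plan is to deduce this from Proposition \ref{pro:trimed-finiteness}, following the proof of \cite[Theorem 3.3]{Leh12}. The point is to present a numerical neighbourhood of $[K_X+\Delta+B]$ by adjoint classes $[K_X+\Delta+A+\Gamma]$, where $A$ is a fixed general ample $\mbQ$-divisor and $\Gamma$ varies in an effective KLT region of a finite dimensional space $V\subset\WDiv_\mbR(X)$; Proposition \ref{pro:trimed-finiteness} then attaches to these divisors finitely many birational contractions $\phi_i:X\rtmap X_i$ each carrying a log Fano fibration $g_i:X_i\to Y_i$, and the curves $C_i$ will be produced from curves lying in the fibres of the $g_i$.

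First I would set up $A$ and $V$. Since $B$ is big and effective, a standard perturbation argument (combined with Theorem \ref{thm:general-bertini} to preserve the KLT condition) lets us write $B\sim_\mbR A+B_0$ with $A$ a general ample $\mbQ$-divisor and $B_0\>0$, in such a way that $(X,\Delta+A+B_0)$ is KLT and $\Supp B_0$ contains general effective ample $\mbQ$-divisors $H_1,\dots,H_r$ whose classes span $N^1(X)_\mbR$. Let $V$ be the (finite dimensional) space of $\mbR$-divisors supported on $\Supp B_0$. Then $B_0\in V$, the linear map $V\to N^1(X)$ is surjective, and---since the KLT condition is open and every component of $\Supp B_0$ occurs in $B_0$ with positive coefficient---there is an open neighbourhood $\mcN\subset V$ of $B_0$ consisting of divisors $\Gamma$ with $\Gamma\>0$ and $(X,\Delta+A+\Gamma)$ KLT. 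The affine map $\Gamma\mapsto[K_X+\Delta+A+\Gamma]$ is open, so the image $U$ of $\mcN$ is an open neighbourhood of $[K_X+\Delta+A+B_0]=[K_X+\Delta+B]$ in $N^1(X)$, and any $\alpha\in U$ on the boundary of $\Eff(X)$ is of the form $[K_X+\Delta+A+\Gamma]$ for some $\Gamma\in\mcN$; since pseudo-effectivity is numerical, such a $\Gamma$ lies in $\mcE_A$ and $K_X+\Delta+A+\Gamma$ lies on the boundary of $\Eff(X)$. (If $U$ can be shrunk into the interior of $\Eff(X)$ the assertion is vacuous and any complete intersection movable curve works.)

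Next I would apply Proposition \ref{pro:trimed-finiteness} to $(X,\Delta)$, the ample divisor $A$ and the space $V$, obtaining finitely many $\phi_i:X\rtmap X_i$ with log Fano fibrations $g_i:X_i\to Y_i$, $1\<i\<k$, such that whenever $\Gamma\in\mcE_A$ with $K_X+\Delta+A+\Gamma$ on the boundary of $\Eff(X)$, some $\phi_j$ is a log minimal model of $K_X+\Delta+A+\Gamma$ with $K_{X_j}+{\phi_j}_*(\Delta+A+\Gamma)\sim_\mbR g_j^*L_j$ for an ample $\mbR$-divisor $L_j$ on $Y_j$. For each $i$ I would fix a common resolution $p_i:W_i\to X$, $q_i:W_i\to X_i$ of $\phi_i$ and let $\Phi_i$ be the reduced sum of all $q_i$-exceptional prime divisors on $W_i$; since $X_i$ is $\mbQ$-factorial, $q_i(\Supp\Phi_i)$ has codimension $\>2$ in $X_i$, so I can choose an irreducible curve $C_i'$ contained in a general fibre of $g_i$ (a general complete intersection of ample divisors inside that fibre), moving in a family covering $X_i$ and with $C_i'\cap q_i(\Supp\Phi_i)=\emptyset$. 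Letting $\widetilde C_i\subset W_i$ be the strict transform of $C_i'$, which is then disjoint from $\Supp\Phi_i$, I set $C_i:=p_{i*}\widetilde C_i$, a movable curve on $X$. The key observation is that $\Phi_i$ is fixed independently of $\Gamma$, so a single $C_i$ will do for every $\Gamma$ whose minimal model is $\phi_i$.

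Finally, given $\alpha\in U$ on the boundary of $\Eff(X)$, write $\alpha=[K_X+\Delta+A+\Gamma]$ with $\Gamma\in\mcN\cap\mcE_A$ and pick $j$ as above. By $(K_X+\Delta+A+\Gamma)$-negativity of $\phi_j$ we have on $W_j$
\[p_j^*(K_X+\Delta+A+\Gamma)=q_j^*\bigl(K_{X_j}+{\phi_j}_*(\Delta+A+\Gamma)\bigr)+E,\qquad E\>0,\ \Supp E\subset\Supp\Phi_j,\]
and intersecting with $\widetilde C_j$ the projection formula gives $\alpha\cdot C_j=g_j^*L_j\cdot C_j'+E\cdot\widetilde C_j=0$, because $C_j'$ is contracted by $g_j$ and $\widetilde C_j$ is disjoint from $\Supp\Phi_j$. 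I expect the main obstacle to be the first step: converting the bare bigness of $B$ into a pair $(A,V)$ whose adjoint classes genuinely cover a neighbourhood of $K_X+\Delta+B$---the demand that $\Gamma$ stay effective near $B_0$ conflicts with the demand that $V\to N^1(X)$ be onto, and is reconciled only by forcing the spanning ample divisors $H_j$ into $\Supp B_0$. Beyond this and the uniform-support remark above, all the substance---the existence and finiteness of the $\phi_i$ and the fibrations $g_i$---is exactly Proposition \ref{pro:trimed-finiteness}, which rests in turn on Theorem \ref{thm:finite-mmp}.
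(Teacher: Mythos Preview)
Your proposal is correct and follows essentially the same route as the paper's proof: both reduce to Proposition~\ref{pro:trimed-finiteness} after manufacturing a finite-dimensional space $V$ whose effective KLT region surjects onto a numerical neighbourhood of $[K_X+\Delta+B]$, and both extract the movable curves from general fibres of the resulting log Fano fibrations $g_i:X_i\to Y_i$. The paper carries out your ``standard perturbation argument'' more explicitly---writing $B\equiv H+E$, passing to $(1-\ve)B+\ve E$, peeling off a small ample $\mbQ$-divisor $A$ from $\ve H$, and then invoking \cite[Lemma~3.1]{Leh12} to produce general ample $W_j\sim_\mbQ H_j$ whose components enlarge $V$ so that $V\to N^1(X)$ is onto---but this is exactly the mechanism you anticipate in your final paragraph.

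The one genuine point of divergence is the curve-lifting: the paper observes (via \cite[Lemma~2.4]{BW17}) that the indeterminacy locus $\Ex(\phi_i^{-1})$ has codimension $\ge 2$ in $X_i$, so a general curve $C_i'$ in a general fibre of $g_i$ avoids it and $\phi_i^{-1}$ is an isomorphism near $C_i'$, giving $D\cdot C_i=\phi_{i*}D\cdot C_i'=0$ directly. Your common-resolution argument, recording that the support of the discrepancy divisor $E$ is $q_j$-exceptional and that $\widetilde C_j$ misses it, is an equivalent bookkeeping of the same geometry; either way the point is that the curve lives in the locus where $\phi_i$ is an isomorphism.
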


\begin{proof}
	This result corresponds to Proposition 3.3 in \cite{Leh12}.\\
	Since $B$ is a big $\mbR$-divisor, $B\num H+E$ for some ample $\mbR$-divisor $H\>0$ and an effective $\mbR$-divisor $E$. Then $K_X+\Delta+B+\ve E$ is KLT for $0<\ve\ll 1$. In particular, $K_X+\Delta+(1-\ve)B+\ve E$ is KLT. Let $A\>0$ be an ample $\mbQ$-divisor such that $\ve H-A$ is ample and $K_X+\Delta+A+(1-\ve)B+\ve E$ is KLT, by Theorem \ref{thm:general-bertini}. Let $\{H_j\>0\}_{j=1}^m$ be a finite set of ample $\mbQ$-divisors such that the convex hull of the classes $[H_j]$'s contains an open set around $[\ve H-A]$ in $N^1(X)$. Let $U'$ be an open neighborhood of $[B-A]$ contained in the convex hull of $[B-\ve H+H_j]$'s. We apply \cite[Lemma 3.1]{Leh12} to $K_X+\Delta+A+(1-\ve)B+\ve E$ and $H_j$'s to obtain finitely many ample $\mbQ$-divisors $0\<W_j\sim_\mbQ H_j$. Let $V$ be the vector space of $\mbR$-divisors spanned by the irreducible components of $(1-\ve)B+\ve E$ and of the $W_j$'s. Therefore $V$ is a finite dimensional subspace of $\WDiv(X)_\mbR$ such that every class in $U'$ has an effective representative $\Gamma\in V$ with $(X, \Delta+A+\Gamma)$ KLT.\\
	
	Let $\Gamma\in V$ be a representative of a class in $U'$ such that $D=K_X+\Delta+A+\Gamma$ is KLT. We can run the $D$-MMP with the scaling of an ample divisor. If $D$ lies on the boundary of $\Eff(X)$, then by Proposition \ref{pro:trimed-finiteness} there exists a birational contraction $\phi_i:X\rtmap X_i$ and a log Fano fibration $g_i:X_i\to Z_i$ such that $K_{X_i}+{\phi_i}_*(\Delta+A+\Gamma)\sim_{\mbR} f^*L_i$ for some ample $\mbR$-divisor $L_i$ on $Z_i$. Let $C'_i$ be a  curve on a general fiber of $g_i$. Note that the exceptional locus $\Ex(\phi^{-1}_i)$ where $\phi^{-1}_i:X_i\rtmap X$ is not an isomorphism intersects the general fiber of $g_i$ along at least codimension $2$ subsets (see \cite[Lemma 2.4]{BW17}). Therefore by choosing $C'_i$ sufficiently general we see that $\phi_i:X_i\rtmap X$ is an isomorphism in a neighborhood of $C'_i$ and $C'_i$ belongs to a family of curves dominating $X_i$. Let $C_i$ be the image of $C'_i$ under $\phi^{-1}_i$. Then $C_i$ is a movable curve on $X$ and $D\cdot C_i=(K_{X_i}+{\phi_i}_*(\Delta+A+\Gamma))\cdot C'_i=f^*L_i\cdot C'_i=0$.\\
	
	Now by Proposition \ref{pro:trimed-finiteness} there are finitely many log Fano bibrations $g_i:X_i\to Z_i$ for all $\Gamma\in V$. Therefore there are finitely many movable curves $\{C_i\}$ on $X$ satisfying the properties as in the previous paragraph. Set $U=U'+[K_X+\Delta+A]$. Then $U$ is an open neighborhood of $[K_X+\Delta+A]$ and the curves $C_i$'s have the required properties.

	\end{proof}

\begin{corollary}\label{cor:global-nbhd-lemma}
Let $(X, \Delta)$ be a $\mbQ$-factorial projective KLT pair with $\dim X=3$ and $\chr p>5$. Suppose that $\mcS\subset\Eff(X)$ is a set of divisor classes satisfying the following properties:
\begin{enumerate}
	\item $\mcS$ is closed.
	\item For each element $\beta\in\mcS$, there is some big effective divisor $B$ such that $(X, \Delta+B)$ is KLT and $[K_X+\Delta+B]=c\beta$ for some $c>0$.
\end{enumerate}

Then there are finitely many movable curves $\{C_i\}$ such that every class $\alpha$ which lies on the boundary of $\Eff(X)$ satisfies $\alpha\cdot C_i=0$ for some $i$.
\end{corollary}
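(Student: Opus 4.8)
The plan is to derive this global statement from the local Proposition~\ref{pro:nbhd-lemma} by a compactness argument. First I would reduce to the case where $\mcS$ is compact: both the conclusion ``$\alpha\cdot C_i=0$ for some $i$'' and the condition ``$\alpha$ lies on $\partial\Eff(X)$'' are invariant under rescaling $\alpha\mapsto\lambda\alpha$, $\lambda>0$ (since $\Eff(X)$ is a cone), and hypothesis~(2) is likewise preserved, with the constant $c$ replaced by a positive multiple; so after fixing an ample class $h\in N^1(X)$ and normalizing --- using that $\Eff(X)$ is salient, hence has a compact cross-section transverse to $h$ --- I may assume $\mcS$ is compact.

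For each $\beta\in\mcS$, hypothesis~(2) supplies a big effective divisor $B_\beta\ge 0$ with $(X,\Delta+B_\beta)$ KLT and $[K_X+\Delta+B_\beta]=c_\beta\beta$ for some $c_\beta>0$. Applying Proposition~\ref{pro:nbhd-lemma} to $(X,\Delta)$ and $B=B_\beta$ gives an open neighbourhood $U_\beta\ni[K_X+\Delta+B_\beta]=c_\beta\beta$ in $N^1(X)$ together with finitely many movable curves $C^\beta_1,\dots,C^\beta_{N_\beta}$ such that every class in $U_\beta$ on $\partial\Eff(X)$ is orthogonal to one of them. Rescaling, $U'_\beta:=c_\beta^{-1}U_\beta$ is an open neighbourhood of $\beta$, and --- again by scale-invariance --- every class in $U'_\beta$ on $\partial\Eff(X)$ is orthogonal to some $C^\beta_j$. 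Then $\{U'_\beta\}_{\beta\in\mcS}$ is an open cover of the compact set $\mcS$; I extract a finite subcover $U'_{\beta_1},\dots,U'_{\beta_r}$ and take $\{C_i\}$ to be the finite union $\{C^{\beta_k}_j:1\le k\le r,\ 1\le j\le N_{\beta_k}\}$. Any class $\alpha$ on $\partial\Eff(X)$ which lies in $\mcS$ then belongs to some $U'_{\beta_k}$, hence $\alpha\cdot C_i=0$ for some $i$, and arbitrary positive multiples of such $\alpha$ are handled by scale-invariance.

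I expect the only real difficulty to be organizational: performing the reduction to a compact $\mcS$ carefully, and checking that ``lying on $\partial\Eff(X)$'' together with property~(2) is compatible with the chosen normalization, so that the rescaled neighbourhoods $U'_\beta$ still detect boundary classes via the fixed finite curve-collections. The geometric content --- realizing the $C_i$ as general fibres of log Fano fibrations produced by the MMP and by Proposition~\ref{pro:trimed-finiteness}, which rests on Theorem~\ref{thm:finite-mmp} --- has already been carried out in Proposition~\ref{pro:nbhd-lemma}, so nothing new of that sort is needed here.
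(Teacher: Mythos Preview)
Your proposal is correct and follows essentially the same route as the paper, which simply invokes \cite[Corollary 3.5]{Leh12}: reduce to a compact cross-section of $\mcS$, apply Proposition~\ref{pro:nbhd-lemma} at each point to obtain local open neighbourhoods with finite curve-collections, and extract a finite subcover. One cosmetic remark: to normalize you need a linear functional on $N^1(X)$ positive on $\Eff(X)\setminus\{0\}$, i.e.\ an interior point of $\NM(X)$ (for instance $h^{n-1}$ for $h$ ample), rather than an ample class in $N^1(X)$ itself; but this does not affect the argument.
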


\begin{proof}
	With the help of Proposition \ref{pro:nbhd-lemma}, the same proof as in \cite[Corollary 3.5]{Leh12} works.\\ 
\end{proof}

\begin{proposition}\label{pro:v-bounding-divisor}
Let $(X, \Delta)$ be a $\mbQ$-factorial projective KLT pair with $\dim X=3$ and $\chr p>5$. Let $V$ be a closed convex cone containing $\NE(X)_{K_X+\Delta=0}-\{0\}$ in its interior. Then there is a finite set of movable curves $\{C_i\}$ such that for any $V$-bounding divisor $D$ there is some $C_i$ for which $D\cdot C_i=0$.	
\end{proposition}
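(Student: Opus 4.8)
The plan is to reduce this statement to Corollary~\ref{cor:global-nbhd-lemma} by producing an appropriate closed set $\mcS$ of divisor classes. Let $D$ be any $V$-bounding divisor. By definition $D\cdot\alpha\>0$ for all $\alpha\in V$, and since $V$ contains $\NE(X)_{K_X+\Delta=0}\setminus\{0\}$ in its interior, $D$ is in particular nonnegative on $\NE(X)_{K_X+\Delta=0}$; moreover $D^\bot$ meets a coextremal ray, which lies in $\NM(X)$ and is $(K_X+\Delta)$-negative, so $D$ is not identically zero on $\NM(X)$. The first point is to observe that such a $D$ must be pseudo-effective: since $D\cdot\alpha\>0$ for all $\alpha\in\NE(X)_{K_X+\Delta\>0}+\NM(X)$ (the defining property of a bounding divisor, together with the $V$-bounding hypothesis controlling the part where $K_X+\Delta<0$), and since by Theorem~\ref{thm:cone-duality} and Corollary~\ref{cor:cone-equality} a divisor class is pseudo-effective iff it pairs nonnegatively with $\NM(X)=\NF(X)$, we get $[D]\in\Eff(X)$. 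Because $D^\bot$ contains a coextremal ray, which by definition is $(K_X+\Delta)$-negative, $D$ is actually on the \emph{boundary} of $\Eff(X)$ (if it were in the interior it would be strictly positive on all of $\overline{\NE}$, hence on the coextremal ray, contradiction).

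Next I would define $\mcS$ to be the set of all divisor classes $\beta\in\partial\Eff(X)$ that arise, up to positive scaling, as $[K_X+\Delta+B]$ for some big effective $\mbR$-divisor $B$ with $(X,\Delta+B)$ KLT, \emph{and} which in addition are nonnegative on $V$ (i.e.\ $V$-bounding, or limits of such). Concretely: take $\mcS$ to be the closure in $N^1(X)$ of the set of classes $c[K_X+\Delta+B]$, $c>0$, with $B$ as above and $[K_X+\Delta+B]\in\partial\Eff(X)$ pairing nonnegatively with $V$. Property (2) of Corollary~\ref{cor:global-nbhd-lemma} holds by construction for the dense subset, and one checks it is inherited appropriately; property (1) holds since we took a closure. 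The key verification is that every $V$-bounding divisor $D$, after rescaling, lies in (the closure generating) $\mcS$: given $D$ on $\partial\Eff(X)$ as above, one perturbs by a small ample $\mbQ$-divisor $A$ and writes $D+A$ as $K_X+\Delta+B$ for a suitable big effective $B$ with $(X,\Delta+B)$ KLT, using Theorem~\ref{thm:general-bertini} to keep the pair KLT; running the appropriate MMP and letting $A\to 0$ exhibits $[D]$ as a limit of boundary classes of the required form — this is exactly the mechanism already used in the proof of Proposition~\ref{pro:nbhd-lemma}. Then Corollary~\ref{cor:global-nbhd-lemma} applied to $\mcS$ yields finitely many movable curves $\{C_i\}$ such that every class on $\partial\Eff(X)$ — in particular every $V$-bounding divisor class — satisfies $D\cdot C_i=0$ for some $i$.

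The main obstacle I anticipate is the bookkeeping needed to realize an arbitrary $V$-bounding divisor $D$ (which a priori is just some nef-on-$V$ $\mbR$-Cartier class on the boundary of $\Eff(X)$) as a limit of adjoint classes $K_X+\Delta+B$ with the KLT and bigness constraints, uniformly enough that the finiteness in Corollary~\ref{cor:global-nbhd-lemma} applies. One has to be careful that the set $\mcS$ is genuinely closed and that condition (2) of that corollary is satisfied for enough of $\mcS$ to force every boundary class to be caught; this is handled by taking $\mcS$ to be a closure of the ``good'' adjoint classes and invoking the fact (already exploited in Proposition~\ref{pro:nbhd-lemma}) that the neighborhood statement is stable under such limits. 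A secondary technical point is confirming that coextremal rays indeed lie on the boundary of $\Eff(X)^{*}$-dual picture correctly, so that $D$ being a $V$-bounding divisor really does force $[D]\in\partial\Eff(X)$ and not the interior; but this follows formally from the definitions together with Theorem~\ref{thm:cone-duality}. Once these points are in place the proposition is immediate.
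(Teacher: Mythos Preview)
Your overall strategy---reduce to Corollary~\ref{cor:global-nbhd-lemma} by exhibiting the $V$-bounding divisors as (rescalings of) adjoint classes $K_X+\Delta+B$ with $B$ big and $(X,\Delta+B)$ KLT---is exactly the paper's, and your observation that every $V$-bounding divisor lies on $\partial\Eff(X)$ is correct. But the heart of the argument is missing. You write ``one perturbs by a small ample $\mbQ$-divisor $A$ and writes $D+A$ as $K_X+\Delta+B$ for a suitable big effective $B$''; however, there is no reason for $D+A-(K_X+\Delta)$ to be big, or even pseudo-effective, for an arbitrary class $D\in\partial\Eff(X)$. This is precisely where the $V$-bounding hypothesis (and not merely the bounding-divisor hypothesis) must be used, and you never use it after the first paragraph. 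The paper invokes \cite[Lemma~4.3]{Leh12} at this point: because $D$ is nonnegative on $\NE(X)_{K_X+\Delta\>0}$ \emph{and} on a cone $V$ whose interior contains $\NE(X)_{K_X+\Delta=0}-\{0\}$, one can find $\delta_D>0$ so that $\frac{1}{\delta_D}D-(K_X+\Delta)$ is ample on all of $\NE(X)$. Then one simply sets $B=A'\sim_\mbR\frac{1}{\delta_D}A_D$ chosen via Theorem~\ref{thm:general-bertini} so that $(X,\Delta+A')$ is KLT, and $[K_X+\Delta+A']=\frac{1}{\delta_D}[D]$ \emph{on the nose}---no limiting procedure is needed.

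Your closure construction for $\mcS$ is also problematic on its own terms: Corollary~\ref{cor:global-nbhd-lemma} requires condition~(2) for \emph{every} $\beta\in\mcS$, not for a dense subset, and there is no mechanism by which ``$[K_X+\Delta+B]=c\beta$ with $(X,\Delta+B)$ KLT'' passes to limits. The paper avoids this entirely by taking $\mcS$ to be the set of $V$-bounding divisors itself (closed by the characterization you noted) and verifying~(2) directly for each element via the lemma above. Once you supply the missing step---that $\frac{1}{\delta_D}D-(K_X+\Delta)$ is ample for some $\delta_D>0$---the proof is immediate, and the detour through limits and closures can be dropped.
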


\begin{proof}
	This result corresponds to Proposition 4.4 in \cite{Leh12}.\\
Let $\mcS$ be the set of all $V$-bounding divisors. We may assume that $\mcS$ is non-empty, otherwise there is nothing to prove. Note that a non-zero $\mbR$-Cartier divisor $D$ is $V$-bounding if and only if $D$ lies on the boundary of $\Eff(X)$ and satisfies the closed condition 
\[
D\cdot\alpha\>0 \mbox{ for every class } \alpha\in\NE(X)_{K_X+\Delta\>0}+V+\NM(X).	
\]  
Now by \cite[Lemma 4.3]{Leh12} there exists an ample $\mbR$-Cartier divisor $A_D\>0$ and a positive real number $\delta_D>0$ such that 
\[
	\frac{1}{\delta_D}D=(K_X+\Delta)+\frac{1}{\delta_D}A_D.
\]
Let $A'\>0$ be an ample $\mbR$-divisor such that $A'\sim_\mbR\frac{1}{\delta_D} A_D$ and $(X, \Delta+A')$ is KLT (see Theorem \ref{thm:general-bertini}). Then we have 
\[
	[K_X+\Delta+A']=\frac{1}{\delta_D}[D].
\]
Therefore by Corollary \ref{cor:global-nbhd-lemma} there exists finitely many movable curves $\{C_i\}$ satisfying the required conditions.
	
\end{proof}

\begin{theorem}\label{thm:Q-dlt-case}
	Let $(X, \Delta)$ be a $\mbQ$-factorial projective DLT pair with $\dim X=3$ and $\chr p>5$. Let $V$ be a closed convex cone containing $\NE(X)_{K_X+\Delta=0}-\{0\}$ in its interior. Then there exist finitely many movable curves $C_i$ such that
	\begin{equation}\label{eqn:Q-cone-comparison}
		\NE(X)_{K_X+\Delta\>0}+V+\NM(X)=\NE(X)_{K_X+\Delta\>0}+V+\sum_{i=1}^N\mbR_{\>0}[C_i].
	\end{equation}
\end{theorem}

\begin{proof}
	This result is a variant of Corollary 4.5 in \cite{Leh12}.\\
Without loss of generality we may assume by shrinking $V$ if necessary that $\NE(X)+V$ does not contain any $1$-dimensional subspace of $N_1(X)$. Then there exists an ample divisor $A$ which is positive on $V-\{0\}$.\\

First we reduce the problem to the KLT case. Let $H\>0$ be an ample $\mbQ$-divisor on $X$. Since $X$ is $\mbQ$-factorial, $H+\ve\Delta$ is ample for $0<\ve\ll 1$. Let $0\<A\sim_\mbR H+\ve\Delta$ be an ample divisor which avoids all DLT centers of $(X, \Delta)$ as well as all irreducible components of $\Delta$. Then $(X, \Delta+\delta A)$ is DLT for $1<\delta\ll 1$. In particular, $(X, (1-\delta\ve)\Delta+\delta A)$ is KLT and $K_X+(1-\delta\ve)\Delta+\delta A\sim_\mbR K_X+\Delta+\delta H$. Choose $\delta>0$ sufficiently small such that $\NE(X)_{K_X+(1-\delta\ve)\Delta+\delta A=0}$ is contained in the interior of the cone $V$. We will show that if we assume the statement for $((1-\delta\ve)\Delta+\delta A, V)$, then it holds for $(\Delta, V)$.\\
Let $\alpha\in\NM(X)$. If $\alpha\in \NE(X)_{K_X+(1-\delta\ve)\Delta+\delta A\>0}+V+\sum\mbR_{\>0}[C_i]$, then $\alpha=\beta+\gamma+\sum a_i[C_i]$, where $\beta\in \NE(X)_{K_X+(1-\delta\ve)\Delta+\delta A\>0}, \gamma\in V$ and $a_i\>0$ for all $i$. Note that $(K_X+(1-\delta\ve)\Delta+\delta A)\cdot\beta\>0$ implies that $(K_X+\Delta)\cdot\beta\>0$ by letting $\delta\to 0^+$. Therefore by replacing $(X, \Delta)$ by $(X, (1-\delta\ve)\Delta+\delta A)$ we may assume that $(X, \Delta\>0)$ is KLT.\\

Let $\{C_i\}_{i=1}^N$ be the finite set of curves obtained in Proposition \ref{pro:v-bounding-divisor}. It is obvious that the right hand side of \eqref{eqn:Q-cone-comparison} is contained in the left hand side. We will show the other inclusion. Let $\alpha\in\NM(X)$ such that it is not contained in $\NE(X)_{K_X+\Delta\>0}+V+\sum\mbR_{\>0}[C_i]$. Then there exists a $\mbR$-divisor $B$ such that $B\cdot \gamma\>0$ for all $\gamma\in \NE(X)_{K_X+\Delta\>0}+V+\sum\mbR_{\>0}[C_i]$ but $B\cdot \alpha<0$.\\

 Let $A$ be an ample divisor which is positive on $V-\{0\}$. Set $\ve=\max\{t>0:A+tB \mbox{ is pseudo-effective }\}$. Then $\ve>0$. Furthermore, from the discussion in the proof of the Proposition \ref{pro:v-bounding-divisor} it follows that $A+\ve B$ is a $V$-bounding divisor. However, $(A+\ve B)\cdot C_i>0$ for all $i\in\{1, 2,\ldots, N\}$, a contradiction to the Proposition \ref{pro:v-bounding-divisor}.

\end{proof}

\begin{theorem}\label{thm:dlt-case}
	Let $(X, \Delta)$ be a projective DLT pair with $\dim X=3$ and $\chr p>5$. Let $V$ be a closed convex cone containing $\NE(X)_{K_X+\Delta=0}-\{0\}$ in its interior. Then there are finitely many movable curves $C_i$ such that 
	\[
		\NE(X)_{K_X+\Delta\>0}+V+\NM(X)=\NE(X)_{K_X+\Delta\>0}+V+\sum_{i=1}^N\mbR_{\>0}[C_i].
	\]
\end{theorem}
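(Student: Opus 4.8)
The goal is to remove the $\mbQ$-factoriality hypothesis from Theorem \ref{thm:Q-dlt-case}. The plan is to reduce to the $\mbQ$-factorial DLT case already handled, via a small $\mbQ$-factorialization, and then transport the cone identity back down along the birational morphism.

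First I would invoke Lemma \ref{lem:dlt-to-Q-factorization}: since $(X,\Delta)$ is a projective DLT $3$-fold in $\chr p>5$, there is a small birational morphism $\pi:Y\to X$ with $Y$ normal projective $\mbQ$-factorial, $(Y,\Delta_Y\>0)$ DLT, and $K_Y+\Delta_Y=\pi^*(K_X+\Delta)$. Because $\pi$ is small (hence birational and an isomorphism in codimension one) and proper, it induces an injection $\pi^*:N^1(X)\hookrightarrow N^1(Y)$ and a surjection $\pi_*:N_1(Y)\twoheadrightarrow N_1(X)$, with the projection formula $(\pi^*D)\cdot\gamma = D\cdot\pi_*\gamma$. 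Standard facts (see \cite{KM98}) give $\pi_*(\NE(Y))=\NE(X)$, and since $K_Y+\Delta_Y=\pi^*(K_X+\Delta)$ we get $\pi_*\big(\NE(Y)_{K_Y+\Delta_Y\>0}\big)=\NE(X)_{K_X+\Delta\>0}$. Likewise $\pi_*$ sends movable curves to movable curves (the image of a covering family still covers a dense subset), so $\pi_*(\NM(Y))\subseteq\NM(X)$, and conversely a movable curve on $X$ that avoids the (codimension $\geq 2$) indeterminacy locus of $\pi^{-1}$ lifts to a movable curve on $Y$, giving $\pi_*(\NM(Y))=\NM(X)$.

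Next I would set $V_Y := (\pi_*)^{-1}(V)\cap\NE(Y)$, or more carefully choose a closed convex cone $V_Y\subset N_1(Y)$ containing $\NE(Y)_{K_Y+\Delta_Y=0}-\{0\}$ in its interior with $\pi_*(V_Y)\subseteq V$; since $\NE(Y)_{K_Y+\Delta_Y=0}$ maps into $\NE(X)_{K_X+\Delta=0}$ (again using $K_Y+\Delta_Y=\pi^*(K_X+\Delta)$ and the projection formula), and $V$ contains the latter in its interior, such a $V_Y$ exists — one can take the preimage of a slightly shrunk $V$ together with the $K_Y+\Delta_Y=0$ locus. Apply Theorem \ref{thm:Q-dlt-case} to $(Y,\Delta_Y)$ and $V_Y$ to get finitely many movable curves $\{C_i'\}_{i=1}^N$ on $Y$ with
\[
\NE(Y)_{K_Y+\Delta_Y\>0}+V_Y+\NM(Y)=\NE(Y)_{K_Y+\Delta_Y\>0}+V_Y+\sum_{i=1}^N\mbR_{\>0}[C_i'].
\]
Then push forward by $\pi_*$: set $C_i:=\pi_*C_i'$ (each a movable curve on $X$, discarding any that become numerically trivial). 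Using the three identities $\pi_*(\NE(Y)_{K_Y+\Delta_Y\>0})=\NE(X)_{K_X+\Delta\>0}$, $\pi_*(V_Y)=V$ (arranging equality by construction, or at least $\subseteq$ plus the reverse from $\pi^*$-injectivity on the relevant faces), and $\pi_*(\NM(Y))=\NM(X)$, the left side maps onto $\NE(X)_{K_X+\Delta\>0}+V+\NM(X)$ and the right side maps onto $\NE(X)_{K_X+\Delta\>0}+V+\sum\mbR_{\>0}[C_i]$, yielding the desired equality. The inclusion $\supseteq$ in the final statement is automatic since each $C_i$ is movable.

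The main obstacle I anticipate is the bookkeeping around the cone $V_Y$: one must ensure simultaneously that $V_Y$ contains $\NE(Y)_{K_Y+\Delta_Y=0}-\{0\}$ in its interior (so Theorem \ref{thm:Q-dlt-case} applies) and that $\pi_*(V_Y)=V$ exactly (so the pushforward of the identity is the identity we want, not a larger cone). A clean way is to note that since $\pi$ is small, $\pi_*$ restricted to $\NM(Y)$ and to the pseudo-effective-side faces behaves well, and to choose $V_Y=\overline{\pi_*^{-1}(V)\cap\NE(Y)}$; then $\pi_*(V_Y)=V\cap\NE(X)=V$ provided $V\subseteq\NE(X)$, which we may assume after intersecting $V$ with $\NE(X)$ (this does not affect the statement since all cones in the identity lie inside $\NE(X)$). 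Verifying that this $V_Y$ contains $\NE(Y)_{K_Y+\Delta_Y=0}$ in its interior uses that $\pi^*$ is a closed embedding of $N^1$ and that nef-ness and the $K+\Delta=0$ condition are preserved, so the interior condition upstairs follows from the one downstairs. A secondary point to check carefully is that $\pi_*$ of a movable curve is movable and that movable curves on $X$ lift: both follow from $\pi$ being an isomorphism outside a codimension $\geq 2$ set, together with \cite[Lemma 2.4]{BW17}-type genericity as already used in the proof of Proposition \ref{pro:nbhd-lemma}.
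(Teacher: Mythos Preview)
Your approach is essentially the paper's: pass to the small $\mbQ$-factorialization $\pi:Y\to X$ of Lemma \ref{lem:dlt-to-Q-factorization}, apply Theorem \ref{thm:Q-dlt-case} upstairs, and push the resulting cone identity forward by $\pi_*$. The paper dispenses with your bookkeeping around $V_Y$ by simply taking $V_Y=\pi_*^{-1}(V)$ (so that $\pi_*(V_Y)=V$ is automatic from surjectivity of $\pi_*$, and the interior condition is argued via the duality between $\pi_*$ on curves and $\pi^*$ on divisors) and then citing \cite[Lemma 2.1]{Leh12} for the pushforward of the entire identity, so no lifting of movable curves from $X$ to $Y$ is needed.
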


\begin{proof}
	This result corresponds to Theorem 4.7 in \cite{Leh12}.\\
The statement is vacuously true if $K_X+\Delta$ is pseudo-effective. So we may assume that $K_X+\Delta$ is not pseudo-effective. We will complete the proof in two steps.\\

First we reduce the problem to a $\mbQ$-factorial DLT pair. By Lemma \ref{lem:dlt-to-Q-factorization} there exists a small birational morphism $\pi:Y\to X$ and a $\mbQ$-factorial DLT pair $(Y, \Gamma\>0)$ such that
\[
	K_Y+\Gamma=\pi^*(K_X+\Delta).
\]
Since the map $\pi_*:N_1(Y)\to N_1(X)$ is linear, $\pi^{-1}_*V$ is closed and convex. Furthermore, $\NE(Y)_{K_Y+\Gamma=0}$ is contained in the interior of $\pi^{-1}_*V$, since $\pi_*$ on curves is dual to $\pi^*$ on divisors. Then by Theorem \ref{thm:Q-dlt-case} we have
\[
	\NE(Y)_{K_Y+\Gamma\>0}+\pi^{-1}_*V+\NM(Y)=\NE(Y)_{K_Y+\Gamma\>0}+\pi^{-1}_*V+\sum_{i=1}^N\mbR_{\>0}[C'_i],
\]
where $\{C'_i\}$ are movable curves on $Y$.\\

By \cite[Lemma 2.1]{Leh12} pushing forward the above relation by $\pi_*$ we get
\[
	\NE(X)_{K_X+\Delta\>0}+V+\NM(X)=\NE(X)_{K_X+\Delta\>0}+V+\sum_{i=1}^N\mbR_{\>0}[C_i],
\]
where $C_i=\pi_*C'_i$.\\
Since birational push-forward of a movable curve is again movable, this completes the proof.

\end{proof}

\begin{proof}[Proof of Theorem \ref{thm:nef-cone-structure}]
	Let $\{V_j\}$ be a countable collection of nested closed convex cones containing $\NE(X)_{K_X+\Delta=0}-\{0\}$ in their interiors such that
	\[
		\bigcap_j V_j=\NE(X)_{K_X+\Delta=0}.
	\]
Let $\mcC_j$ be the finite set of movable curves corresponding to $V_j$ obtained in Theorem \ref{thm:dlt-case}. Note that all the curves in $\mcC'_j$ lie on the boundary of $\NM(X)$, but not all of them generate extremal rays of $\NM(X)$. By removing those redundant curves we may assume that each curve in $\mcC_j$ generates a co-extremal ray. Define $\mcC=\cup_j\mcC_j$. Then $\mcC$ is at most countable.\\

By contradiction assume that $\alpha\in\NM(X)$ but
\[
	\alpha\not\in\NE(X)_{K_X+\Delta\>0}+\overline{\sum_\mcC\mbR_{\>0}[C_i].}
\]	
Since this cone is closed and convex, there is a convex open neighborhood $U$ of the cone which does not contain $\alpha$. Then from our construction it follows that $V_j\subset U$ for $j$ sufficiently large. In particular,
\[
	\alpha\not\in\NE(X)_{K_X+\Delta\>0}+V_j+\overline{\sum_\mcC\mbR_{\>0}[C_i]}.
\]	
But this is a contradiction to Theorem \ref{thm:dlt-case}. This completes the proof of the first part of the theorem.\\

Let $\alpha$ be a curve class which lies on the $(K_X+\Delta)$-negative portion of the boundary of $\NE(X)_{K_X+\Delta\>0}+\NM(X)$ and that $\alpha$ does not lie on a hyperplane supporting both $\NM(X)$ and $\NE(X)_{K_X+\Delta\>0}$. For a sufficiently small open neighborhood $U$ of $\alpha$ the points of $\overline{U}$ still do not lie on such a hyperplane. We may also assume that $\overline{U}$ is disjoint from $\NE(X)_{K_X+\Delta\>0}$. We define 
\[
	\mcP:=\overline{U}\cap\partial(\NE(X)_{K_X+\Delta\>0}+\NM(X)). 
\]
Fix a compact slice $\mcS$ of $\Eff(X)$ and let $\mcD$ denote the bounding divisors contained in $\mcS$ which have vanishing intersection with some elements of $\mcP$. By construction $\mcD$ is positive on $\NE(X)_{K_X+\Delta\>0}-\{0\}$. By passing to a compact slice, say $\mcT$, it is easy to see that $\mcD$ is also positive on $V_j-\{0\}$ for $j\gg 0$. In other words, every element of $\mcP$ is on the boundary of $\NE(X)_{K_X+\Delta\>0}+V_j+\NM(X)$. By Theorem \ref{thm:dlt-case} there are only finitely many co-extremal rays that lie on this cone, and thus there are only finitely many co-extremal rays contained in $U$. Therefore $\alpha$ can not be an accumulation point.\\

\end{proof}

\section{Finiteness of co-extremal rays in characteristic $0$}
In this section we prove the Batyrev' Conjecture \ref{con:batyrev} over the filed of complex numbers $\mbC$. We follow the same strategy as in the proof of \cite[Theorem 1.3]{Ara10}. For a given $\ve>0$, a pair $(X, \Delta)$ is called a $\ve$-log canonical pair if for every divisor $E$ over $X$ the discrepancies $a(E; X, \Delta)\>-1+\ve$. 
\begin{proposition}\label{pro:boundedness}
	Fix a real number $\ve>0$ and an integer $n>0$. Then there exists a constant $G=G(n, \ve)>0$ depending only on $n$ and $\ve$ and satisfying the following properties:\\
	If $(X, \Delta)$ is a $\mbQ$-factorial projective $\ve$-log canonical pair of dimension $n$ and $K_X+\Delta$ is not pesudo-effective, then for every Mori fiber space $\phi:X\rtmap X'$, $f':X'\to Y'$, obtained via a $(K_X+\Delta)$-MMP, there exists a projective movable curve $C\subset X$ isomorphic (under $\phi$) to a movable curve $C'$ lying on the general fiber of $f'$ such that $-(K_X+\Delta)\cdot C\<G$. 
	\end{proposition}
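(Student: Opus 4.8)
The plan is to run the given $(K_X+\Delta)$-MMP to its Mori fibre space, to recognise the general fibre as an $\ve$-log canonical log Fano variety, to invoke Birkar's boundedness theorem \cite{Bir16sep} for a \emph{uniform} bound on the anti-log-canonical degree of a movable curve lying on such a fibre, and finally to carry that curve back to $X$; this is the strategy of \cite{Ara10} and \cite{Leh12}. Write $\Delta'=\phi_*\Delta$. Since $\phi$ is produced by a $(K_X+\Delta)$-MMP it is a $(K_X+\Delta)$-non-positive birational contraction, $X'$ is $\mbQ$-factorial, and $(X',\Delta')$ is again $\ve$-log canonical because discrepancies do not decrease along $(K_X+\Delta)$-negative divisorial contractions and flips. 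As $f'\colon X'\to Y'$ is a Mori fibre space, $-(K_{X'}+\Delta')$ is $f'$-ample and $\rho(X'/Y')=1$. For a general fibre $F$ of $f'$ put $\Delta_F:=\Delta'|_F$; by adjunction $(F,\Delta_F)$ is an $\ve$-log canonical pair with $-(K_F+\Delta_F)$ ample and $\dim F\le n$, so in particular $F$ is of Fano type.

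By \cite[Theorem 1.1]{Bir16sep} the varieties $F$ that admit such a boundary form a bounded family. I would fix a bounding family $\mathcal X\to T$ with $T$ of finite type together with a $T$-very ample divisor $\mathcal H$ on $\mathcal X$ (Koll\'ar's effective base-point free theorem can be used to bound $\mathcal H$ effectively), and write $A:=\mathcal H|_F$ for the induced very ample divisor on a fibre $F$. After stratifying $T$ we may assume $-K_{\mathcal X/T}$ is $\mbQ$-Cartier, and then, since $T$ has finitely many components, the intersection numbers $(-K_F)\cdot A^{\dim F-1}$ are bounded above by a constant $G_0=G_0(n,\ve)$ as $F$ varies. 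Let $C'\subset F$ be the curve cut out by $\dim F-1$ general members of $|A|$; it is a movable curve on $F$, and as $F$ ranges over the general fibres of $f'$ these curves cover a dense subset of $X'$, so $C'$ is movable on $X'$ as well. Choosing $C'$ off $\Supp\Delta_F$ and using adjunction,
\[
-(K_{X'}+\Delta')\cdot C'=-(K_F+\Delta_F)\cdot C'\le -K_F\cdot C'=(-K_F)\cdot A^{\dim F-1}\le G_0 .
\]

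Finally I would transport $C'$ to $X$. Since $\phi^{-1}\colon X'\rtmap X$ extracts no divisor, the locus $Z\subset X'$ where $\phi^{-1}$ is not an isomorphism has codimension $\ge 2$, and enlarging $Z$ by the (also codimension $\ge 2$) image of the exceptional divisor of a common resolution we may assume $Z$ controls $(K_X+\Delta)$-non-positivity; a sufficiently general complete-intersection curve $C'$ in a general fibre $F$ is then disjoint from $Z$. Hence $\phi$ is an isomorphism near $C'$, the curve $C:=\phi^{-1}_*C'\subset X$ is well defined, movable, and carried isomorphically onto $C'$, and $-(K_X+\Delta)\cdot C=-(K_{X'}+\Delta')\cdot C'\le G_0=:G(n,\ve)$. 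The step I expect to be the main obstacle is the boundedness input: one has to verify, \emph{uniformly} over all admissible $(X,\Delta)$ and all choices of $(K_X+\Delta)$-MMP, that the general fibres $(F,\Delta_F)$ satisfy the hypotheses of \cite[Theorem 1.1]{Bir16sep} --- that $\ve$-log canonicity is inherited after the MMP and adjunction, that $\dim F\le n$, and that $F$ is of Fano type --- and then that the resulting degree bound depends only on $n$ and $\ve$; the genericity of $C'$ and the comparison of intersection numbers across $\phi$ are routine.
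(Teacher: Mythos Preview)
Your proposal is correct and follows essentially the same route as the paper: pass to the Mori fibre space, recognise the general fibre $(F,\Delta_F)$ as an $\ve$-lc log Fano, invoke Birkar's BAB theorem \cite{Bir16sep} to produce a uniformly controlled linear system on $F$, take a general complete-intersection curve $C'$ in it, and lift $C'$ across the codimension-$\ge 2$ indeterminacy locus of $\phi^{-1}$. The only cosmetic difference is in how BAB is unpacked: the paper first uses $\rho(X'/Y')=1$ to see that $-K_F$ itself is ample, extracts a uniform bound on its Cartier index and on $(-K_F)^{\dim F}$, and then applies Koll\'ar's effective base-point free theorem to make $|{-}NK_F|$ free, whereas you work directly with a $T$-very ample $\mathcal H$ on the bounding family---the resulting bound $G(n,\ve)$ is the same in substance.
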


\begin{proof}
	Since $K_X+\Delta$ is not pseudo-effective, by running a $(K_X+\Delta)$-MMP as in \cite[Corollary 1.3.3]{BCHM10} we end up with a Mori fiber space $f':X'\to Y'$ such that $-(K_{X'}+\Delta')$ is $f'$-ample, where $\phi:X\rtmap X'$ is a birational contraction and $\Delta'=\phi_*\Delta$. Note that $(X', \Delta')$ is a $\mbQ$-factorial projective $\ve$-log canonical pair and $\rho(X'/Y')=1$. \\

 Let $F$ be a general fiber of $f'$. Then $(F, \Delta_F)$ is $\ve$-log canonical and $-(K_F+\Delta_F)$ is ample, where $K_F+\Delta_F=(K_{X'}+\Delta')|_F$. Note that $-K_X=-(K_X+\Delta)+\Delta$ is $f'$-ample, since $\rho(X'/Y')=1$. In particular, $-K_F\sim_\mbQ -K_X|_F$ is ample. By the boundedness of $\ve$-log canonical log-Fano varieties \cite[Theorem 1.1]{Bir16sep}, there exist an integer $M=M(d, \ve)>0$ and a real number $\lambda=\lambda(d, \ve)>0$ depending only on $\dim F=d$ and $\ve$ such that $-MK_F$ is an ample Cartier divisor and $(-K_F)^d\<\lambda$. Then $-MK_F-(K_F+\Delta_F)$ is ample. Thus by Koll\'ar's effective base-point free theorem \cite[Theorem 1.1]{Kol93}, there exists an integer $N=N(M, d)>0$ such that $-NK_F$ is base-point free.\\
 
Now	let $Z'\subset X'$ be the exceptional locus of $\phi^{-1}:X'\rtmap X$. Then $\codim_{X'}Z'\>2$. Let $C'$ be a general curve contained in $F$ obtained by intersecting $(d-1)$ general members of the linear system $|-NK_F|$. Then $C'$ belongs to a moving family of curves dominating $X$, i.e., $C'$ is a movable curve, and $C'$ does not intersect $F\cap Z$, since $\codim_F (F\cap Z)\>2$. In particular, $C'$ can be lifted isomorphically to $X$, we denote the lift by $C$. Then $C$ is a movable curve on $X$, and $-(K_X+\Delta)\cdot C=-(K_{X'}+\Delta')\cdot C'= -(K_F+\Delta_F)\cdot C'\<-K_F\cdot C'=(-K_F)\cdot (-NK_F)^{d-1}=N^{d-1}(-K_F)^d\<N^{d-1}\lambda$. Set $G:=\lambda N^{d-1}$ and we are done.

	\end{proof}



\begin{proof}[Proof of Theorem \ref{thm:nef-cone-finiteness}]
	The first part of the theorem follows either from \cite[Theorem 1.1]{Ara10} or \cite[Theorem 1.3]{Leh12}.\\

Next we reduce the problem to the $\mbQ$-factorial case. Since $(X, \Delta\>0)$ is KLT, there exists a small birational morphism $f:Y\to X$ such that $K_Y+\Delta_Y=f^*(K_X+\Delta)$, and $(Y, \Delta_Y\>0)$ is a $\mbQ$-factorial terminal pair. Assume that the finiteness of co-extremal rays is known on $\mbQ$-factorial KLT pairs. Now $K_Y+\Delta_Y+f^*H$ is KLT for $H$ general ample divisor. Write $\Delta_Y+f^*H\num A+E$ for some ample $\mbR$-divisor $A\>0$ and effective $\mbR$-Cartier divisor $E$. Then $(Y, \Delta_Y+f^*H+\ve(A+E))$ is KLT for all $0<\ve\ll 1$. Then $(Y, (1-\ve)(\Delta_Y+f^*H)+\ve(A+E))$ is KLT. Note that $K_Y+(1-\ve)(\Delta_Y+f^*H)+\ve(A+E)\num K_Y+\Delta_Y+f^*H$. Set $\Delta'=(1-\ve)(\Delta_Y+f^*H)+\ve E$. Then $(Y, \Delta')$ is KLT and $K_Y+\Delta'+\ve A\num K_Y+\Delta_Y+f^*H=f^*(K_X+\Delta+H)$. Therefore by assumption we have
\[
	\overline{NE}(Y)_{K_Y+\Delta'\>0}+\overline{NM}(Y)=\overline{NE}(Y)_{K_Y+\Delta'+f^*H\>0}+\sum_{i=1}^N \mbR_{\>0}[C^Y_i].
\]
Pushing forward these cones by $f_*$ we get the finiteness result on $X$. Therefore replacing $X$ by $Y$ we may assume that $X$ is a $\mbQ$-factorial KLT pair. 	
Let $\Sigma$ be the set of all $(K_X+\Delta)$-negative movable curves classes $[C_i]$ as in Part $(1)$. Let $\Sigma_H\subset\Sigma$ be the set consisting of the classes $[C]\in\Sigma$ such that $(K_X+\Delta+H)\cdot C<0$. Then by \cite[Theorem 1.1]{Ara10} 
\[\overline{NE}(X)_{K_X+\Delta\>0}+\overline{NM}(X)=\overline{NE}(X)_{K_X+\Delta+ H\>0}+\overline{\sum_{[C]\in\Sigma_H} \mbR_{\>0}[C]}.\]
We will show that the set of rays $\left\{\mbR_{\>0}[C]: [C]\in\Sigma_H\right\}$ is finite.\\
Let $\ve>0$ be the minimum log discrepancy of $(X, \Delta)$. Then $(X, \Delta)$ is $\ve$-log canonical. From the statement of \cite[Theorem 1.1]{Ara10} and Proposition \ref{pro:boundedness} we see that the movable curves $C$ in $\Sigma_H$ satisfy the conclusion of the Proposition \ref{pro:boundedness}. In particular, $0<-(K_X+\Delta)\cdot C\<G$ for all $[C]\in\sum_H$. We also have $(K_X+\Delta+\epsilon H)\cdot C<0$. Therefore $\epsilon H\cdot C\<G$ for all $[C]\in\Sigma_H$. In particular, the curves corresponding to the classes $\left\{\mbR_{\>0}[C]: [C]\in\Sigma_H\right\}$ belong a bounded family, and hence they correspond to only  finitely many different numerical equivalence classes.

	\end{proof}

\bibliographystyle{hep}
\bibliography{references.bib}

\end{document}